\definecolor{orange}{rgb}{1,0.5,0}
\definecolor{darkgreen}{rgb}{.2,0.4,0.3}
\newtheorem{theorem}{Theorem} [section]
\newtheorem{lemma}[theorem]{Lemma}
\newtheorem{proposition}[theorem]{Proposition}
\newtheorem{remark}[theorem]{Remark} %{Remark}[section]
\newtheorem{definition}{Definition} [section]
\newtheorem{corollary}[theorem]{Corollary}
\DeclareMathOperator*{\supp}{supp}
\newcommand{\noi}{\noindent}
\newcommand{\N}{\mathbb{N}}
\newcommand{\M}{\mathcal{M}}
\newcommand{\R}{\mathbb{R}}
\newcommand{\T}{\mathbb{T}}
\newcommand{\C}{\mathbb{C}}
\newcommand{\eps}{\varepsilon}
\newcommand{\ld}{\lambda}
\newcommand{\Ld}{\Lambda}
\newcommand{\ft}{\widehat}
\newcommand{\cj}{\overline}
\newcommand{\dx}{\partial_x}
\newcommand{\dt}{\partial_t}
\newcommand{\wk}{\rightharpoonup}
\numberwithin{equation}{section}
\numberwithin{theorem}{section}
\begin{document}

\title
[Explicit formula for the solution of the
Szeg\"o equation on the real line]
{\bf Explicit formula for the solution of the
Szeg\"o equation on the real line
and applications}

\author{Oana Pocovnicu}

\address{Oana Pocovnicu\\
Laboratoire de Math\'ematiques d'Orsay\\
Universit\'e Paris-Sud (XI)\\
91405, Orsay Cedex, France}

\email{oana.pocovnicu@math.u-psud.fr}

\subjclass[2000]{ 35B15, 37K10, 47B35}

\keywords{Szeg\"o equation, integrable systems, Lax pair, Hankel operators, soliton resolution,
action-angle coordinates}

\begin{abstract}
We consider the cubic Szeg\"{o} equation
\begin{equation*}
i\dt u=\Pi(|u|^{2}u)
\end{equation*}

\noi
in the Hardy space $L^2_+(\R)$ on the upper half-plane,
where $\Pi$ is the Szeg\"o projector.
It is a model for totally non-dispersive evolution equations
and is completely integrable in the sense that it admits a Lax pair.
We find an explicit formula for solutions of the Szeg\"o equation.
As an application,
we prove soliton resolution in $H^s$ for all $s\geq 0$, for generic data.
As for non-generic data, we construct an example for which
soliton resolution holds only in $H^s$, $0\leq s<1/2$,
while the high Sobolev norms grow to infinity over time,
i.e. $\lim_{t\to\pm\infty}\|u(t)\|_{H^s}=\infty,$ $s>1/2.$
As a second application, we construct explicit generalized action-angle coordinates
by solving the inverse problem for the Hankel operator $H_u$ appearing in the Lax pair.
In particular, we show that the trajectories
of the Szeg\"o equation with generic data
are spirals around Lagrangian
toroidal cylinders $\T^N\times\R^N$.
\end{abstract}

\date{\today}
\maketitle

\tableofcontents

\section{Introduction}

\subsection{Cubic Szeg\"o equation}
One of the most important properties
in the study of the nonlinear Schr\"odinger equations (NLS) is {\it dispersion}.
It is often exhibited in the form of the Strichartz estimates
of the corresponding linear flow.
In case of the cubic NLS:
\begin{equation}\label{eqn: Schrodinger}
i\partial_t u+\Delta u=|u|^2u, \quad (t,x)\in\R\times M,
\end{equation}

\noi
G\'erard and Grellier \cite{PGSGX} remarked that
there is a lack of dispersion
when $M$ is a sub-Riemannian manifold
(for example, the Heisenberg group).
In this situation, many of the classical arguments
used in the study of NLS no longer hold.
As a consequence, even the problem of global well-posedness of \eqref{eqn: Schrodinger}
on a sub-Riemannian manifold still remains open.
In \cite{PGSG, PGSGX},
G\'erard and Grellier introduced a model of a non-dispersive Hamiltonian equation
called {\it the cubic Sz\"ego equation.} (See \eqref{eq:szego} below.)
The study of this equation is expected to give new tools to be used in understanding
existence and other properties of smooth solutions of NLS in the absence of dispersion.

In this paper we will consider the Szeg\"o equation on the real line. The space
of solutions in this case is the Hardy space $L^2_+(\R)$
on the upper half-plane $\C_+ = \{ z; \text{Im} z > 0\}$,
defined by
\[L^2_+(\R)=\Big\{f \text{ holomorphic on } \C_+;
\|g\|_{L^2_+(\R)}:=\sup_{y>0}\bigg(\int_{\R}|g(x+iy)|^{2}dx\bigg)^{1/2}<\infty  \Big\}.\]

\noi
In view of the Paley-Wiener theorem, we identify
this space of holomorphic functions on $\C_+$
with the space of their boundary values:
\[L^2_+(\R)=\{f\in L^2(\R); \, \supp{\hat{f}}\subset [0,\infty)\}.\]

\noi
The corresponding Sobolev spaces $H^s_+(\R)$, $s\geq 0$ are defined by:
\begin{align*}
H^s_+(\R)=&\big\{h\in L^2_+(\R);  \|h\|_{H^s_+}:
=\bigg{(}\frac{1}{2\pi}\int_0^{\infty}(1+|\xi|^2)^{s}|\hat{h}(\xi)|^2d\xi\bigg{)}^{1/2}<\infty\big\}.
\end{align*}
Similarly, we define the homogeneous Sobolev norm for $h\in \dot{H}^s_+$ by
\begin{align*}
||h\|_{\dot{H}^s_+}:=\bigg{(}\frac{1}{2\pi}\int_0^{\infty}|\xi|^{2s}|\hat{h}(\xi)|^2\bigg{)}^{1/2}<\infty.
\end{align*}

\noi
Endowing $L^2(\R)$ with the usual scalar product $(u,v)=\int_{\R}u\bar{v}$, we define the Szeg\"{o} projector
$\Pi:L^{2}(\R)\to L^2_+(\R)$ to be the projector onto the non-negative frequencies,
\[\Pi (f)(x)=\frac{1}{2\pi}\int_{0}^{\infty}e^{ix\xi}\hat{f}(\xi)d\xi.\]

\noi
For $u \in L^2_+(\R)$,
we consider \textit{the Sz\"ego equation on the real line}:
\begin{equation}\label{eq:szego}
i\dt u=\Pi(|u|^{2}u), \quad (t, x) \in \R\times \R.
\end{equation}

\noi
Endowing $L^2_+$ with the symplectic structure $\omega(u,v)=4\textup{Im}\int_{\R}u\bar{v}$,
we have that the Szeg\"o equation is a Hamiltonian evolution associated to the Hamiltonian
\begin{equation*}
E(u)=\int_{\R}|u|^4 dx
\end{equation*}

\noi
defined on $L^4_+(\R)$.
From this structure, we obtain the formal conservation law of the energy $E(u(t))=E(u(0))$.
The invariance under translations and under modulations provides two more conservation laws,
the mass $Q(u(t))=Q(u(0))$ and the momentum $M(u(t))=M(u(0))$, where
\begin{equation*}
Q(u)=\int_{\R}|u|^2 dx\quad \text{and} \quad M(u)=\int_{\R}\bar{u}Du\,dx,\
\text{ with } D = -i \dx.
\end{equation*}

\noi
Noting that $Q(u)+M(u)=\|u\|_{H^{1/2}_+}^2$,
we have $\|u(t)\|_{H^{1/2}_+}=\|u(0)\|_{H^{1/2}_+}$.
Hence, $H^{1/2}_+$ is the natural space for studying the well-posedness of the equation.
In \cite[Theorem 1.1]{pocov}, it was shown that the Szeg\"o equation
on the real line is globally well-posed in $H^{1/2}_+(\R)$
and satisfies the persistence of regularity,
i.e. if $u_0\in H^s_+(\R)$ for some $s>\frac{1}{2}$, then $u\in C(\R,H^{s}_+(\R))$.

\medskip

First of all we recall some notions and properties concerning the Szeg\"o equation.
We refer the readers to \cite{pocov}
for more details.
The main property of the Szeg\"o equation
 is that it is completely integrable in the sense that it
possesses a Lax pair structure \cite[Proposition 1.4]{pocov}.
We first define two important classes of operators on $L^2_+$,
{\it the Hankel and Toeplitz operators}.
The Lax pair is given in terms of these operators
in Proposition \ref{th:Lax pair}.

A Hankel operator $H_{u}:L^2_+\to L^2_+$ of symbol $u\in H^{1/2}_+$ is defined by
\[H_{u}(h)=\Pi(u\bar{h}).\]

\noi
Then, as it was shown in Lemma 3.5 in \cite{pocov},
 $H_u$ is Hilbert-Schmidt and $\C$-antilinear. Moreover, it satisfies
 the following identity:
\begin{equation}\label{sym H_u}
(H_{u}(h_{1}),h_{2})=(H_{u}(h_{2}),h_{1}).
\end{equation}

\noi
As a consequence, $H_{u}^2$ is a self-adjoint linear operator.
A Toeplitz operator $T_{b}:L^2_+\to L^2_+$ of symbol $b\in L^{\infty}(\R)$ is defined by
\[T_{b}(h)=\Pi(bh).\]

\noi
Then, $T_{b}$ is $\C$-linear and bounded.  Moreover, $T_b$ is self-adjoint if and only if $b$ is real-valued.

\begin{proposition}[Proposition 1.5 in \cite{pocov}]\label{th:Lax pair}
Let $u\in C(\R;H_+^{s})$ for some $s>\frac{1}{2}$.
The cubic Szeg\"{o} equation
\eqref{eq:szego}
is equivalent to
the following evolution equation:
\begin{equation*}
\frac{d}{dt}H_{u}=[B_{u},H_{u}],
\end{equation*}

\noi
where
\begin{equation}\label{eqn:Bu}
B_{u}=\frac{i}{2}H_{u}^{2}-iT_{|u|^{2}}.
\end{equation}

\noi
In other words, the pair $(H_u,B_u)$ is a
Lax pair for the cubic Szeg\"{o} equation on the real line.
\end{proposition}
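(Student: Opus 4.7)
The plan is to verify the Lax identity as an equality of operators on $L^2_+$. Since the map $u\mapsto H_u$ is $\mathbb{C}$-linear, differentiating the time-dependent operator $H_{u(t)}$ in $t$ gives $\frac{d}{dt}H_u=H_{\dt u}$. If $u$ solves \eqref{eq:szego}, then $\dt u=-i\Pi(|u|^2u)$, so $\frac{d}{dt}H_u=-iH_{\Pi(|u|^2u)}$. On the other hand, expanding $[B_u,H_u]$ while carefully tracking the $\mathbb{C}$-antilinearity of $H_u$---scalars pull through with conjugation, so $[\tfrac{i}{2}H_u^2,H_u]=iH_u^3$ rather than $0$, and $[-iT_{|u|^2},H_u]=-i(T_{|u|^2}H_u+H_uT_{|u|^2})$---gives $[B_u,H_u]=iH_u^3-i(T_{|u|^2}H_u+H_uT_{|u|^2})$. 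Comparing the two expressions, the Lax equation is equivalent to the algebraic identity
\[H_{\Pi(|u|^2u)}=T_{|u|^2}H_u+H_uT_{|u|^2}-H_u^3.\]

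To establish this identity, apply both sides to an arbitrary $h\in L^2_+$ and manipulate Szeg\"o projections. Writing $\Pi_-=I-\Pi$ and using that $\Pi(f\bar h)=\Pi(\Pi(f)\bar h)$ for $h\in L^2_+$ (since $\Pi_-(f)\bar h$ has strictly negative Fourier support), the left-hand side opens as $H_{\Pi(|u|^2u)}(h)=\Pi(|u|^2u\bar h)$. Splitting $u\bar h=H_u(h)+\Pi_-(u\bar h)$ then yields
\[H_{\Pi(|u|^2u)}(h)=T_{|u|^2}H_u(h)+\Pi\bigl(|u|^2\,\Pi_-(u\bar h)\bigr).\]
The remaining task is to rewrite the last term as $H_uT_{|u|^2}(h)-H_u^3(h)$. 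This will follow from the short computation
\[H_u^2(h)=\Pi\bigl(u\,\overline{\Pi(u\bar h)}\bigr)=\Pi\bigl(u(\bar u h-\overline{\Pi_-(u\bar h)})\bigr)=T_{|u|^2}(h)-u\,\overline{\Pi_-(u\bar h)},\]
where the last equality uses that $u\,\overline{\Pi_-(u\bar h)}$ already has non-negative Fourier support (it is a product of two elements of $L^2_+$, since $\overline{\Pi_-(u\bar h)}$ has Fourier support in $(0,\infty)$). Applying the antilinear $H_u$ to this identity, and computing $H_u\bigl(u\,\overline{\Pi_-(u\bar h)}\bigr)=\Pi\bigl(u\bar u\,\Pi_-(u\bar h)\bigr)=\Pi(|u|^2\,\Pi_-(u\bar h))$, yields $H_u^3(h)=H_uT_{|u|^2}(h)-\Pi(|u|^2\,\Pi_-(u\bar h))$, which substitutes back to close the proof.

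The main hazard throughout is the bookkeeping of $\mathbb{C}$-antilinearity in the commutator: blindly applying the na\"ive $\mathbb{C}$-linear identity $[H_u^2,H_u]=0$ would lose precisely the $H_u^3$ term that makes the identity balance (for instance, the bilinear form of $[T_{|u|^2},H_u]$ alone is antisymmetric in its two arguments, while that of $H_{\Pi(|u|^2u)}$ is symmetric by \eqref{sym H_u}, so the $H_u^3$ contribution is genuinely needed). Once the antilinearity is handled, the rest is elementary symbol algebra with $\Pi$ and $\Pi_-$, and the hypothesis $u\in C(\R;H^s_+)$ for some $s>\tfrac{1}{2}$ is more than enough to interpret all pointwise products and compositions above as honest $L^2$ objects.
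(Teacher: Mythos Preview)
Your proof is correct. The paper itself does not reprove this proposition---it merely cites it from \cite{pocov}---so there is no in-paper proof to compare against. Your argument is the standard one (as in \cite{PGSG} and \cite{pocov}): reduce the Lax equation to the operator identity $H_{\Pi(|u|^2u)}=T_{|u|^2}H_u+H_uT_{|u|^2}-H_u^3$, and verify the latter by splitting $u\bar h=H_u(h)+\Pi_-(u\bar h)$ and tracking Fourier supports. Your handling of the antilinearity in the commutator (yielding the nonzero $[\tfrac{i}{2}H_u^2,H_u]=iH_u^3$) is the only subtle point, and you address it correctly.
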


According to the classical theory developed by Lax \cite{Lax}, a direct
consequence of the above proposition is the following corollary:

\begin{corollary}
Let $U(t)$ be an operator on $H^{1/2}_+$ defined by:
\begin{equation}\label{eqn:d_tB_u}
\frac{d}{dt}U(t)=B_{u(t)}U(t), \,\,\,\,\,\,\,\,\,\,\,\,\,\, U(0)=I.
\end{equation}
Then, $U(t)$ is a unitary operator and if $u$ is a solution
of the Szeg\"o equation
\eqref{eq:szego}
with initial condition $u_0$, we have:
\begin{equation}\label{eqn:H_u}
H_{u(t)}=U(t)H_{u_0}U(t)^{\ast}.
\end{equation}

\noi
This yields
\begin{equation}\label{eqn:URan}
U(t)\big(\textup{Ker}(H_{u_0})\big)\subset \textup{Ker} (H_{u(t)}),\,\,\,\,\,\,\,\,\,\,\
U(t)\big{(}\textup{Ran}(H_{u_0})\big{)}\subset \textup{Ran} (H_{u(t)}).
\end{equation}
\end{corollary}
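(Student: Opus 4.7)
The plan is to follow the classical Lax--pair argument of Lax \cite{Lax}, with two adjustments dictated by the present setting: the operator $H_u$ is $\C$-antilinear rather than linear, and $B_u$ depends on time through the solution $u(t)$. The key algebraic input is that $B_{u(t)}$ is skew-adjoint, which makes the evolution \eqref{eqn:d_tB_u} generate a unitary family. Once this is in hand, the conjugation formula \eqref{eqn:H_u} and the kernel/range inclusions \eqref{eqn:URan} follow formally.

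\textbf{Skew-adjointness of $B_u$ and unitarity of $U(t)$.} From the comment following \eqref{sym H_u}, $H_u^2$ is self-adjoint, and since $|u|^2$ is real-valued, $T_{|u|^2}$ is self-adjoint. Therefore, by \eqref{eqn:Bu}, $B_u^* = -\frac{i}{2}H_u^2 + iT_{|u|^{2}} = -B_u$. Differentiating $U(t)^* U(t)$ and using \eqref{eqn:d_tB_u} together with its adjoint $\frac{d}{dt}U(t)^* = U(t)^* B_{u(t)}^* = -U(t)^* B_{u(t)}$ yields
\begin{equation*}
\frac{d}{dt}\bigl(U(t)^{\ast} U(t)\bigr) = -U(t)^{\ast} B_{u(t)} U(t) + U(t)^{\ast} B_{u(t)} U(t) = 0,
\end{equation*}
so $U(t)^* U(t) = I$ for all $t$. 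To obtain $U(t)U(t)^*=I$ as well, I would introduce an auxiliary evolution $\frac{d}{dt}V(t) = -V(t) B_{u(t)}$ with $V(0)=I$ and verify $\frac{d}{dt}(V(t)U(t))=0=\frac{d}{dt}(U(t)V(t))$, which identifies $V(t)$ with $U(t)^{-1}$; combined with $U^*U = I$ this gives $U U^* = I$.

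\textbf{The conjugation formula and consequences.} Set $A(t) := U(t) H_{u_0} U(t)^{\ast}$. Using $\frac{d}{dt}U = B_u U$ and $\frac{d}{dt}U^* = -U^* B_u$, a direct computation gives
\begin{equation*}
\frac{d}{dt}A(t) = B_{u(t)} A(t) - A(t) B_{u(t)} = [B_{u(t)}, A(t)],
\end{equation*}
so $A(t)$ satisfies the same Lax equation as $H_{u(t)}$ from Proposition \ref{th:Lax pair}, with the same initial value $A(0)=H_{u_0}$; uniqueness for this linear operator ODE yields \eqref{eqn:H_u}. (The antilinearity of $H_u$ does not affect the argument, since $U H_{u_0} U^*$ is again antilinear and the commutator makes sense.) For \eqref{eqn:URan}, any $h\in\textup{Ker}(H_{u_0})$ satisfies, using $U^*U = I$,
\begin{equation*}
H_{u(t)}\bigl(U(t)h\bigr) = U(t) H_{u_0} U(t)^{\ast} U(t) h = U(t) H_{u_0} h = 0,
\end{equation*}
and any $h = H_{u_0} g \in \textup{Ran}(H_{u_0})$ satisfies $U(t) h = U(t) H_{u_0} U(t)^{\ast}\bigl(U(t) g\bigr) = H_{u(t)}(U(t)g) \in \textup{Ran}(H_{u(t)})$.

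\textbf{Expected main obstacle.} The only non-formal point is the analytical justification of \eqref{eqn:d_tB_u}. Since $B_{u(t)}$ is an unbounded time-dependent operator on $L^2_+$, one must ensure that $t \mapsto B_{u(t)}$ is sufficiently regular (e.g.\ strongly continuous on a suitable dense domain) so that a unique unitary propagator $U(t)$ exists. This is where the persistence of regularity $u\in C(\R;H^s_+)$, $s>\tfrac12$, from \cite{pocov} enters: it provides enough control on $T_{|u|^2}$ and on the Hilbert--Schmidt operator $H_u^2$ to apply the standard theory of unitary evolutions generated by a time-dependent skew-adjoint generator, legitimizing the formal manipulations above.
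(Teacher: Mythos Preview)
Your proof is correct and follows exactly the classical Lax argument that the paper invokes (the paper does not give a proof of this corollary, merely citing the ``classical theory developed by Lax \cite{Lax}'').

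One small correction regarding your ``expected main obstacle'': $B_{u(t)}$ is in fact a \emph{bounded} operator, not unbounded. Indeed, for $u\in C(\R;H^s_+)$ with $s>\tfrac12$ one has $u(t)\in L^\infty$ by Sobolev embedding, so $|u(t)|^2\in L^\infty$ and $T_{|u(t)|^2}$ is bounded; and $H_{u(t)}$ is Hilbert--Schmidt by \cite[Lemma~3.5]{pocov}, hence $H_{u(t)}^2$ is trace-class and in particular bounded. Thus $t\mapsto B_{u(t)}$ is a continuous family of bounded skew-adjoint operators, and the existence and uniqueness of the unitary propagator $U(t)$ follow from the elementary Picard theory for linear ODEs in the Banach space of bounded operators. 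Your formal manipulations therefore need no further justification.
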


Another consequence of the Lax pair structure is the existence of an infinite sequence of
conservation laws. More precisely, the following corollary holds.
\begin{corollary}\label{cor:conserv}
Define $J_{n}(u):=(u,H_u^{n-2}u)$ for all $n\geq 2$. Then
$J_{2k}(u)$, $k\in\N^{\ast}$, are conserved quantities for the Szeg\"o equation.
In particular, $J_2(u)=Q(u)$, $J_4(u)=\frac{E(u)}{2}$, and
we recover the conservation laws of the mass and energy.
\end{corollary}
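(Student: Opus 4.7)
\medskip

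The plan is to first identify the two low-order quantities explicitly, then prove conservation of $J_{2k}$ for all $k\geq 1$ by a direct differentiation using the Lax pair. For $k=1$, $J_2(u) = (u,u) = \|u\|_{L^2}^2 = Q(u)$, which is conserved by the well-posedness theory. For $k=2$, I apply the symmetry identity \eqref{sym H_u} with $h_1 = H_u u$, $h_2 = u$ together with the self-adjointness of $H_u^2$ to obtain
\[
J_4(u) = (u, H_u^2 u) = (H_u u, H_u u) = \|\Pi(|u|^2)\|_{L^2}^2.
\]
Since $|u|^2$ is real-valued its Fourier transform is Hermitian, so Plancherel gives $\|\Pi(|u|^2)\|_{L^2}^2 = \tfrac12\||u|^2\|_{L^2}^2 = \tfrac12 E(u)$, yielding $J_4 = E/2$.

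For the conservation of $J_{2k}$ in general, I first assemble three ingredients. First, $B_u$ is skew-adjoint: both $H_u^2$ and $T_{|u|^2}$ are self-adjoint (the latter because $|u|^2$ is real), so from \eqref{eqn:Bu} one has $B_u^* = -B_u$. Second, an immediate induction from $\dot H_u = [B_u, H_u]$, using that $B_u$ is $\C$-linear, shows that $\tfrac{d}{dt}(H_u^{2m}) = [B_u, H_u^{2m}]$ for every $m\geq 0$; the antilinearity of $H_u$ causes no trouble because two antilinear maps compose to a linear one. Third, rearranging \eqref{eqn:Bu}, the Szeg\"o equation is equivalent to $\dot u = B_u u - \tfrac{i}{2} H_u^2 u$.

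Differentiating $J_{2k}(u(t)) = (u, H_u^{2k-2} u)$ with the product rule and substituting the above,
\[
\tfrac{d}{dt} J_{2k} = (\dot u, H_u^{2k-2} u) + (u, [B_u, H_u^{2k-2}] u) + (u, H_u^{2k-2} \dot u),
\]
the $B_u$-contributions organize into cancelling pairs: $(B_u u, H_u^{2k-2} u) + (u, B_u H_u^{2k-2} u) = 0$ by skew-adjointness, while the two occurrences of $(u, H_u^{2k-2} B_u u)$ appear with opposite signs and cancel. What survives is
\[
\tfrac{d}{dt} J_{2k}(u) = -\tfrac{i}{2}(H_u^2 u, H_u^{2k-2} u) + \tfrac{i}{2}(u, H_u^{2k} u),
\]
and self-adjointness of the linear operator $H_u^2$ equates the two inner products, so the derivative vanishes.

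The only delicate point is the bookkeeping around antilinearity: one must track carefully that the scalar $-\tfrac{i}{2}$ in $\dot u$ extracts as $+\tfrac{i}{2}$ from the antilinear second slot of the inner product, producing the sign needed for the final cancellation. Beyond this, the argument is mechanical and uses nothing past Proposition~\ref{th:Lax pair}, the identity \eqref{sym H_u}, and the self-adjointness of $H_u^2$.
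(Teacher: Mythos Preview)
Your proof is correct. The identification $J_2=Q$ is immediate, the computation $J_4=\|\Pi(|u|^2)\|_{L^2}^2=\tfrac12\||u|^2\|_{L^2}^2$ via the Hermitian symmetry of $\widehat{|u|^2}$ is clean, and the differentiation argument for general $J_{2k}$ is carried out accurately, including the sign coming from antilinearity in the second slot.

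The paper does not supply a proof of this corollary; it is stated as a direct consequence of the Lax pair. The argument the paper's structure implicitly points to is the unitary conjugation $H_{u(t)}=U(t)H_{u_0}U(t)^{\ast}$ from \eqref{eqn:H_u} together with $U^{\ast}(t)u(t)=e^{-i\frac{t}{2}H_{u_0}^2}u_0$ (proved later as \eqref{eqn:Uastu}): combining these gives $J_{2k}(u(t))=(e^{-i\frac{t}{2}H_{u_0}^2}u_0,\,H_{u_0}^{2k-2}e^{-i\frac{t}{2}H_{u_0}^2}u_0)=J_{2k}(u_0)$ since $e^{-i\frac{t}{2}H_{u_0}^2}$ is unitary and commutes with $H_{u_0}^{2k-2}$. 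Your direct differentiation avoids appealing to the later lemma and is entirely self-contained from Proposition~\ref{th:Lax pair}; the two routes are equivalent in spirit, yours being the infinitesimal version of the conjugation argument.
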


\begin{remark}\label{conservation laws} \rm
Using the Mikhlin multiplier theorem, we can prove that $J_{2k}(u)\leq \|u\|_{L^{2k}}^{2k}$.
Then, by the Sobolev embedding we have that $J_{2k}(u)\leq \|u\|_{H^{1/2}_+}^{2k}$.
This shows that the strongest conservation law for the Szeg\"o equation is the $H^{1/2}_+$-norm.
\end{remark}

\subsection{Main results}
It turns out that rational functions play an important role
in studying the Hankel operators, and thus the Szeg\"o equation.
In the following, we first consider solutions for the
Szeg\"o equation with rational function
initial data
 $u_0\in\M(N)$, where $\M(N)$
is defined below.

\begin{definition}
Let $N\in\N^{\ast}$. We denote by $\M(N)$ the set of rational functions of the form
$$\frac{A(z)}{B(z)},$$
where $A\in\C_{N-1}[z]$, $B\in\C_{N}[z]$, $0\leq \deg(A)\leq N-1$, $\deg(B)=N$, $B(0)=1$, $B(z)\neq 0$,
for all $z\in\C_{+}\cup\R$, and $A$ and $B$ have no common factors.
\end{definition}

\noi
Note that $\M(N)$ is a $4N$-dimensional real manifold, $\M(N)\subset H^{s}_+(\R)$
for all $s\geq 0$,
and that $\bigcup_{N=1}^{\infty}\M(N)$ is dense in $L^2_+$ \cite[Lemma 6.2.1]{Nikolskii}.
Moreover, they remain invariant under the flow.

\begin{proposition} \label{PROP:invariant}
The manifolds $\M(N)$ are invariant under the flow of the Szeg\"o equation.
\end{proposition}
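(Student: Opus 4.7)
The strategy is to characterize $\M(N)$ in terms of the rank of the Hankel operator $H_u$ and then use the Lax pair to show this rank is preserved by the flow. Concretely, I will argue that
\[ u\in \M(N) \iff \textup{rank}(H_u)=N, \]
and then combine this with the conjugation identity \eqref{eqn:H_u}.

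\textbf{Step 1 (Kronecker-type characterization).} I would invoke the half-plane version of Kronecker's theorem (as found in \cite{Nikolskii}) asserting that $u\in L^2_+(\R)$ satisfies $\textup{rank}(H_u)=N$ if and only if $u$ is a rational function whose denominator, in lowest terms, has degree exactly $N$. The forward direction is a direct computation: for $u=A/B\in\M(N)$ with simple poles $z_1,\dots,z_N\in\C_-$ (the general case handled by partial fractions with multiplicities), one checks that $\textup{Ran}(H_u)$ is contained in the span of the $N$ linearly independent functions $x\mapsto 1/(x-\cj z_j)^k$ (with suitable multiplicities) and that this inclusion is an equality. The reverse direction — that any $u\in L^2_+$ with $H_u$ of finite rank must be rational — is the substantive content of Kronecker's theorem. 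Once $u(t)$ is known to be rational, the conditions defining $\M(N)$ are then automatic: $u(t)\in L^2_+(\R)$ forces all zeros of $B$ to lie in $\C_-$ (in particular $B(0)\neq 0$, so the normalization $B(0)=1$ can be imposed), and $u(t)\in L^2(\R)$ forces $\deg A<\deg B$; coprimality is obtained by passing to lowest terms.

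\textbf{Step 2 (Invariance of rank).} Since $\M(N)\subset H^s_+(\R)$ for every $s\geq 0$, the persistence of regularity from \cite{pocov} recalled in the introduction guarantees that the solution $u(t)$ stays in $H^s_+(\R)$ for all $s$, so Proposition \ref{th:Lax pair} and its corollary apply. The identity
\[ H_{u(t)}=U(t)H_{u_0}U(t)^{\ast}, \]
together with the unitarity (hence invertibility) of $U(t)$, shows that $H_{u(t)}$ and $H_{u_0}$ have the same rank for every $t\in\R$.

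\textbf{Step 3 (Conclusion).} If $u_0\in\M(N)$, then by Step 1 $\textup{rank}(H_{u_0})=N$; by Step 2, $\textup{rank}(H_{u(t)})=N$ for all $t$; by Step 1 applied in reverse, $u(t)\in\M(N)$.

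The only delicate point is the reverse direction of the Kronecker-type characterization in Step 1, which is the classical hard direction; everything else amounts to unpacking the definitions and using the unitary conjugation identity. Matching the degree $N$ to the rank $N$ requires that the denominator normalizations in the definition of $\M(N)$ be compatible with the Hardy-space setting, which, as noted above, is automatic from $u\in L^2_+(\R)$.
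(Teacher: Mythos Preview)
Your proof is correct and follows exactly the paper's approach: invoke the Kronecker-type characterization (stated in the paper as Proposition~\ref{PROP:kronecker}, from \cite{pocov}) that $u\in\M(N)$ iff $\textup{rk}(H_u)=N$, and then use the unitary conjugation \eqref{eqn:H_u} to conclude that the rank is preserved along the flow. One small slip: in Step~1 the range of $H_u$ is spanned by the functions $1/(x-z_j)^k$ with the poles $z_j\in\C_-$ themselves, not their conjugates (cf.\ \eqref{eq:ranH_u}); otherwise these functions would not lie in $L^2_+$.
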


In order to prove this statement, we recall a Kronecker-type theorem.

\begin{proposition}[Theorem 2.1 in \cite{pocov}]
\label{PROP:kronecker}
Let $u\in H^{\frac{1}{2}}_+$. Then $u\in\M(N)$ if and
only if $\textup{rk}(H_u)=N$.
Moreover, if $u=\frac{A}{B}\in\M(N)$, where $A$ and $B$ are relatively prime, $B(0)=1$,
$B(x)=(x-p_1)^{m_1}\dots(x-p_k)^{m_k}$, $m_1+\dots m_k=N$,
and $\textup{Im}(p_j)<0$ for all $j=1,2,\dots,k$
then, we have that
\begin{equation}\label{eq:ranH_u}
\textup{Ran}(H_u)=\textup{span}_{\C}\bigg\{
\frac{1}{(x-p_j)^{l_j}}, j=1,2,\dots,k \text{ and } l_j=1,2,\dots,m_j\bigg\}.
\end{equation}

\end{proposition}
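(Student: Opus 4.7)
The plan is to prove both directions of the equivalence via an explicit residue formula for $H_u$ combined with a dimension count. First I would establish the key identity: for any $p\in\C_-$ and $h\in L^2_+$,
\[H_{1/(x-p)}(h)(z)=\frac{\overline{h(\bar p)}}{z-p}.\]
This follows from $H_{1/(x-p)}(h)=\Pi(\bar h/(x-p))$, evaluated either in Fourier (using $\widehat{1/(x-p)}(\xi)=-2\pi i e^{-ip\xi}\mathbf{1}_{\xi>0}$) or via the Cauchy representation of $\Pi$ together with the partial fraction $\frac{1}{(x-p)(x-z)}=\frac{1}{z-p}\bigl(\frac{1}{x-z}-\frac{1}{x-p}\bigr)$; in either derivation the key input is that $\bar h$ extends holomorphically to $\C_-$ with $\bar h(p)=\overline{h(\bar p)}$. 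Differentiating in $p$ yields the analogous formulas for $H_{1/(x-p)^l}(h)$, whose values stay in $\textup{span}_{\C}\{1/(z-p)^{l'}:1\le l'\le l\}$.

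For the direction $u\in\M(N)\Longrightarrow\textup{rk}(H_u)=N$ together with \eqref{eq:ranH_u}, I would expand $u=A/B$ in partial fractions $u=\sum_{j,l}c_{jl}/(x-p_j)^l$, so that the key identity yields $\textup{Ran}(H_u)\subset V$, where $V$ denotes the right-hand side of \eqref{eq:ranH_u}; clearly $\dim_{\C}V=N$. To prove the reverse inclusion I would identify $\ker(H_u)$ directly: $H_u(h)=0$ iff $u\bar h\in L^2_-$, and since the $p_j$-poles of $u$ of order $m_j$ must be cancelled by zeros of the $\C_-$-extension of $\bar h$, one obtains $\ker(H_u)=\{h\in L^2_+:h\text{ vanishes at }\bar p_j\text{ to order }\ge m_j\}$. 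The Cauchy-type pairings $(h,1/(x-p_j)^l)=\frac{2\pi i}{(l-1)!}h^{(l-1)}(\bar p_j)$ then identify this kernel with $V^\perp$. A one-line manipulation of \eqref{sym H_u} shows $\ker(H_u)=\textup{Ran}(H_u)^\perp$ for any $\C$-antilinear operator with that symmetry, so $\textup{Ran}(H_u)=V^{\perp\perp}=V$ and $\textup{rk}(H_u)=N$.

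For the converse $\textup{rk}(H_u)=N\Longrightarrow u\in\M(N)$, I would probe $u$ via the $L^2_+$-reproducing kernels $k_q(x)=\frac{i}{2\pi(x-\bar q)}$ at $q\in\C_+$. A residue computation gives $H_u(k_q)=\frac{1}{2\pi i}\cdot\frac{u(x)-u(q)}{x-q}$. Picking $N+1$ distinct points $q_i\in\C_+$, the divided differences $(u-u(q_i))/(x-q_i)$ are $\C$-linearly dependent in $L^2_+$, and rearranging a non-trivial relation $\sum_i\alpha_i(u-u(q_i))/(x-q_i)=0$ exhibits $u$ as a rational function $P/Q$ with $\deg P,\deg Q\le N$. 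Membership $u\in H^{1/2}_+$ forces all poles of $P/Q$ into $\C_-$ and $\deg P<\deg Q$, so after clearing common factors and normalizing $Q(0)=1$ one obtains $u\in\M(N')$ for some $N'\le N$; applying the forward direction already proved gives $\textup{rk}(H_u)=N'$, whence $N'=N$ and $u\in\M(N)$.

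I expect the main obstacle to be the converse direction: extracting a clean rational representation $P/Q$ from the linear dependence of the divided differences, then controlling that all poles lie in $\C_-$ and that $\deg P<\deg Q$, and finally bootstrapping off the forward direction to pin down $\deg Q=N$ exactly. The short but essential preliminary identity $\ker(H_u)=\textup{Ran}(H_u)^\perp$ for the $\C$-antilinear $H_u$ obeying \eqref{sym H_u} does not follow from the usual linear theory and must be verified by hand from the symmetry.
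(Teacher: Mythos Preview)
The paper does not actually prove this proposition: it is quoted verbatim as Theorem~2.1 from the companion paper \cite{pocov} and used as a black box (e.g.\ in the proof of Proposition~\ref{PROP:invariant} and of Lemma~\ref{lemma:g=1-b}). So there is no in-paper argument to compare against; your sketch supplies a proof where the paper gives none.

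That said, your argument is correct and the pieces fit together. A few remarks. First, your kernel computation $\ker(H_u)=\{h:h\text{ vanishes at }\bar p_j\text{ to order }\ge m_j\}$ is exactly what the paper re-derives in the proof of Lemma~\ref{lemma:g=1-b} (via the Fourier transform \eqref{residue thm} and Plancherel), and the identification $\ker(H_u)=V^\perp$ you give is the same pairing used there; so this part of your proof is fully consistent with the paper's later use of the result. Second, the divided-difference formula $H_u(k_q)=\tfrac{1}{2\pi i}\,\tfrac{u(x)-u(q)}{x-q}$ you invoke for the converse is precisely the computation carried out in \eqref{eqn: H_u(1/1+ieps x)} (for $q=i/\eps$), so this step is on solid ground as well. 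Third, the passage from ``$N+1$ divided differences are $\C$-linearly dependent'' to ``$u$ is rational of degree $\le N$'' is clean once you note that $\sum_i\alpha_i\prod_{j\ne i}(x-q_j)$ cannot vanish identically when the $q_i$ are distinct and the $\alpha_i$ are not all zero (evaluate at $x=q_k$); you might make this explicit. Finally, your closing bootstrap ($u\in\M(N')\Rightarrow\textup{rk}(H_u)=N'\Rightarrow N'=N$) is the right way to pin down the exact degree, and the coprimality of $A,B$ is what guarantees the pole orders are exactly $m_j$, so that the kernel description is sharp.
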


\begin{proof}[Proof of Proposition \ref{PROP:invariant}]
By equation \eqref{eqn:H_u} and Proposition \ref{PROP:kronecker},
%the Kronecker-type theorem,
we have that if $u_0\in\M(N)$,
then $\textup{rk}(H_{u(t)})=\textup{rk}(H_{u_0})=N$.
Thus the corresponding solution $u(t)\in\M(N)$ for all $t\in\R$.
\end{proof}

As a corollary  of the Kronecker-type theorem \cite[Remark 2.2]{pocov},
we also have that if $u\in\M(N)$
then $u\in\text{Ran}(H_u)$,
i.e. there exists a unique element
$g\in\text{Ran}(H_{u})$ such that
\begin{equation}\label{eqn: def g}
u=H_{u}(g).
\end{equation}

\noi
This yields $\Pi (u(1-\bar{g}))=0$, which gives:
\begin{align}\label{eqn:bar{u}(1-g)}
\bar{u}(1-g)\in L^2_+.
\end{align}

\noi
An important property of Hankel operators, that will be a key point in this paper,
is their characterization using
the shift operators $\tilde{T}_{\ld}:L^2_+\to L^2_+$, $\ld>0$,
\begin{equation*}
\tilde{T}_{\ld}f(x)=e^{i\ld x}f(x).
\end{equation*}

\noi
More precisely, the bounded operator $H:L^2_+\to L^2_+$
is a Hankel operator if and only if
\begin{equation}\label{tilde T_ld}
\tilde{T}_{\ld}^{\ast}H=H\tilde{T}_{\ld}
\end{equation}

\noi
for all $\ld>0$, \cite[p. 273]{Nikolskii}. The adjoint
$\tilde{T}_{\ld}^{\ast}:L^2_+\to L^2_+$, defined by
\begin{equation*}
\tilde{T}_{\ld}^{\ast}f(x)=e^{-i\ld x}(f\ast\mathcal{F}^{-1}(\chi_{[\ld,\infty)}))(x),
\end{equation*}

\noi
is very inconvenient to use. Then, for rational functions $u$,
we define the infinitesimal shift operator
$T:\textup{Ran}(H_u)\to \textup{Ran}(H_u)$,
\begin{equation} \label{eq: def T}
Tf(x)=xf(x)-\big(\lim_{x\to\infty}xf(x)\big)(1-g(x))
\end{equation}

\noi
and prove that
\begin{equation}\label{comute T, H}
T^{\ast}H_u=H_uT.
\end{equation}

In the general case, when $u$ is not a rational function,
$u$ does not always belong to $\textup{Ran}(H_u)$.
Thus, $g$ satisfying \eqref{eqn: def g} does not always exist.
If such $g$ does not exist,
the above definition \eqref{eq: def T} of $T$ does not make sense.
We then propose, in Section 3, to extend a definition for $T^{\ast}$
(see \eqref{eq:def T star} below)
and pursue our work using $T^{\ast}$ rather than $T$.

\medskip

Next, we recall the definition
and the characterization of soliton solutions for the Szeg\"o equation.
See \cite{pocov} for details.
\begin{definition} \rm
A {\it soliton} for the Szeg\"o equation on the real line is a solution
$u$ with the property that there exist $c,\omega\in\R$, $c\neq 0$ such that
\[u(t,x)=e^{-it\omega}u_0(x-ct).\]
\end{definition}

\noi
In \cite[Theorem 2]{pocov}
it was proved that all the solitons for the Szeg\"o equation on $\R$ are of the form
\begin{equation}\label{soliton}
u(t,x)=e^{-i\omega t}\phi_{C,p}(x-ct),
\end{equation}

\noi
where $\phi_{C,p}=\frac{C}{x-p}$, $\omega=\frac{|C|^2}{4(\textup{Im}p)^2}$, $c=\frac{|C|^2}{-2\textup{Im}p}$,
 $C,p\in\C$, and $\textup{Im}p<0$. Hence, a soliton
of the Szeg\"o equation on $\R$ is a simple fraction $u(t,x)=\frac{C e^{-i\omega t}}{x-ct-p}\in\M(1)$,
where $\textup{Im}(p)<0$.

\medskip

We are now ready to state the main results of this paper. In the first place we find an explicit
formula for the solutions of the Szeg\"o equation with rational function data.

\begin{theorem}[Explicit formula in the case of rational function data]\label{thm:general formula}
Suppose that $u_0\in\M(N)$ and $H^2_{u_0}$ has positive eigenvalues $\ld_1^2\leq \ld_2^2\leq\dots\leq\ld_N^2$.
We will assume that $\ld_j>0$ for all $j=1,2,\dots,N$.
Choose a complex orthonormal basis
$\{e_j\}_{j=1}^N$
of $\textup{Ran}(H_{u_0})$,
consisting of  eigenvectors of $H_{u_0}^2$
such that $H_{u_0}e_j=\ld_je_j$ for all $j=1,2,\dots,N$.
Let $W(t)=e^{i\frac{t}{2}H_{u_0}^2}$ and $\beta_j=(g_0,e_j)$.

We define an operator $S(t)$ on $\textup{Ran}(H_{u_0})$ in the following way.
Fix $j \in \{1, \dots, N\}$,
and let $\ld_j^2$ be an eigenvalue of multiplicity $m_j$.
Moreover, let $M_j\subset\N$ be the set of all indices $k$ such that $H_{u_0}e_{k}=\ld_j e_{k}$.
Then, $S(t)$ in the basis $\{e_j\}_{j=1}^N$ is defined by the matrix
\begin{align}\label{eqn:S}
S(t)_{k,j}=
\begin{cases}
\frac{\ld_j}{2\pi i(\ld_k^2-\ld_j^2)}
\Big(\ld_j e^{i\frac{t}{2}(\ld_k^2-\ld_j^2)}\cj{\beta}_j\beta_k
-\ld_k e^{i\frac{t}{2}(\ld_j^2-\ld_k^2)}\beta_j\cj{\beta}_k\Big),\text{ if } k\in\{1,\dots,N\}\setminus M_j,\\
\frac{\ld_j^2}{2\pi}\cj{\beta}_j\beta_k t+(Te_j,e_k), \text{ if } k\in M_j.
\end{cases}
\end{align}

Then, we have the following explicit formula for the solution of the Szeg\"o equation:
\begin{equation*}
u(t,x)=\frac{i}{2\pi}\Big(u_0,W(t)(S- xI)^{-1}W(t)g_0\Big), \textup{ for all } \quad (t, x)\in\R \times \R.
\end{equation*}

\end{theorem}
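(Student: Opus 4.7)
The plan is to use the Lax pair to linearise the Szeg\"o flow, then convert the result into a resolvent identity via the shift operator $T(t)$, and finally transport this identity back to $\mathrm{Ran}(H_{u_0})$ in the basis $\{e_j\}$.

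First, I would derive explicit linearised formulas for $u(t)$ and $g(t)$. Differentiating $U(t)^* u(t)$ using the Szeg\"o equation $\partial_t u = -iT_{|u|^2}u$, the ODE $(U^*)' = -U^*B_u$ with $B_u = \tfrac{i}{2}H_u^2 - iT_{|u|^2}$, and the Lax-pair consequence $U(t)^* H_{u(t)}^2 = H_{u_0}^2 U(t)^*$ (from \eqref{eqn:H_u}), the two Toeplitz contributions cancel exactly, yielding $\tfrac{d}{dt}(U^*u) = -\tfrac{i}{2}H_{u_0}^2(U^*u)$ and hence
\[
u(t) = U(t)W(t)^*u_0.
\]
A parallel argument---using the $\C$-antilinearity of $H_{u_0}$ to produce the twisted commutation $H_{u_0}W(t)^* = W(t)H_{u_0}$, and combining $u = H_u g$ with the Lax conjugation $H_{u(t)} = UH_{u_0}U^*$---gives the companion formula $g(t) = U(t)W(t)g_0$.

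Next comes the central resolvent identity. For fixed $t$ and $x$ outside $\mathrm{spec}(T(t))$, I invert $T(t)-xI$ explicitly: solving $(T(t)-xI)v = g(t)$ via the definition $T(t)f(y) = yf(y) - c_f(1 - g(t,y))$, the requirement that the rational function $v$ be regular at $y = x$ forces $c_v = -g(t,x)/(1 - g(t,x))$ and yields
\[
\bigl((T(t)-xI)^{-1}g(t)\bigr)(y) = \frac{g(t,y) - g(t,x)}{(y - x)(1 - g(t,x))}.
\]
Pairing this against $u(t)$, splitting $u(t)\overline{g(t)} = \Pi(u(t)\overline{g(t)}) + (1-\Pi)(u(t)\overline{g(t)}) = u(t) + (u(t)\overline{g(t)} - u(t))$ via the Hankel identity $\Pi(u\bar g) = H_u g = u$, and invoking the Plemelj jump formulas $\mathrm{p.v.}\int f(y)/(y-x)\,dy = \pm i\pi f(x)$ for $f\in L^2_\pm$, a short algebraic cancellation against the factor $1 - \overline{g(t,x)}$ collapses everything to
\[
(u(t), (T(t)-xI)^{-1}g(t)) = 2\pi i\, u(t,x).
\]

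To recast this in the form of the theorem, set $\tilde S(t) := U(t)^*T(t)U(t)$; unitarity of $U(t)$ gives $(\tilde S(t)-xI)^{-1} = U(t)^*(T(t)-xI)^{-1}U(t)$. Substituting $u(t) = UW^*u_0$, $g(t) = UWg_0$ and sliding the unitaries through the inner product rewrites the previous identity as $(u_0, W(t)(\tilde S(t)-xI)^{-1}W(t)g_0) = 2\pi i\, u(t,x)$, which up to the overall prefactor $\tfrac{i}{2\pi}$ is precisely the desired formula, with $S(t)$ being the matrix of $\tilde S(t)$ in the basis $\{e_j\}$. The remaining task---and the main obstacle---is to match this matrix with the explicit prescription \eqref{eqn:S}. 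This reduces to evaluating $(T(t)U(t)e_j, U(t)e_k)$ using the definition of $T(t)$, which requires computing the integrals $\int y\,\tilde e_j\overline{\tilde e_k}\,dy$, $(1,\tilde e_k)$, $(g(t),\tilde e_k)$, and $c_{\tilde e_j(t)}$ for $\tilde e_j(t) := U(t)e_j$. Using $H_{u(t)}\tilde e_j = \lambda_j\tilde e_j$ and the evolution $\partial_t \tilde e_j = B_{u(t)}\tilde e_j$ brings in the oscillatory phases $e^{it\lambda_j^2/2}$ weighted by the initial data $\beta_j = (g_0,e_j)$. In the off-diagonal blocks $k\notin M_j$ the two exponentials $e^{\pm it(\lambda_k^2-\lambda_j^2)/2}$ emerge from the two couplings of $\tilde e_j$ to $u(t)$ and to $\overline{g(t)}$, and the denominator $(\lambda_k^2-\lambda_j^2)^{-1}$ reflects a non-resonant integration; the resonant case $k\in M_j$ is then handled by a l'H\^opital-type limit, producing the linear-in-$t$ correction $\tfrac{\lambda_j^2\bar\beta_j\beta_k}{2\pi}\,t$ on top of the initial matrix element $(Te_j,e_k)$.
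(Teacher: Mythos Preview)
Your derivation of the resolvent formula
\[
u(t,x)=\frac{i}{2\pi}\Bigl(u_0,\,W(t)\bigl(\tilde S(t)-xI\bigr)^{-1}W(t)g_0\Bigr),\qquad \tilde S(t)=U(t)^*T(t)U(t),
\]
takes a genuinely different route from the paper. The paper passes through Fourier space: it uses $\widehat u(\lambda)=(u,e^{i\lambda x}g)$, introduces the compressed shift semigroup $S_\lambda=e^{i\lambda S}$, and integrates $\int_0^\infty e^{iz\lambda}S_\lambda\,d\lambda$ to produce $(S-\bar z)^{-1}$. Your approach instead inverts $T(t)-xI$ \emph{explicitly} on $g(t)$ via the defining relation $Tf=xf-\Lambda(f)(1-g)$, and then evaluates the resulting pairing with $u(t)$ by splitting $u\bar g=\Pi(u\bar g)+(I-\Pi)(u\bar g)$ and applying Plemelj. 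This is more elementary (no semigroup theory, no Fourier inversion) and gives the same result; the paper's route, on the other hand, generalises more readily to non-rational data where explicit inversion of $T-xI$ is unavailable.

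The real gap is in the last step, the identification of the matrix of $\tilde S(t)$ with the explicit formula \eqref{eqn:S}. Your plan ``reduce to computing $\int y\,\tilde e_j\overline{\tilde e_k}\,dy$, $(1,\tilde e_k)$, $(g(t),\tilde e_k)$, $c_{\tilde e_j(t)}$'' is circular: the functions $\tilde e_j(t)=U(t)e_j$ are only known through the flow of $B_{u(t)}$, which depends on the unknown $u(t)$. Of the quantities you list, $(g(t),\tilde e_k)$ and $\Lambda(\tilde e_j(t))$ can indeed be reduced to initial data via unitarity, but the main term $\int y\,\tilde e_j\overline{\tilde e_k}\,dy$ cannot (and is not even absolutely convergent). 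The vague references to ``non-resonant integration'' and a ``l'H\^opital-type limit'' do not constitute a plan: in particular the resonant entries $(Te_j,e_k)$ for $k\in M_j$ are genuine additional data, not obtainable as a limit of the off-diagonal formula.

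The paper's way out is the key idea you are missing. Rather than attempting to compute $\tilde S(t)$ directly, it computes two \emph{algebraic} objects that turn out to be expressible entirely in terms of $H_{u_0}$ and $\tilde e(t)=W(t)g_0$ (hence free of $U(t)$): the commutator $[H_{u_0}^2,\tilde S(t)]$, obtained by conjugating the identity $[T,H_u^2]h=\tfrac{1}{2\pi i}\bigl((h,u)u-(h,H_u^2g)g\bigr)$ through $U(t)$, which pins down all entries with $\lambda_k\ne\lambda_j$; and the time derivative $\tfrac{d}{dt}\tilde S(t)$, obtained by combining $[T,B_u]$ with an explicit formula for $g'(t)$, which fixes the remaining entries. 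Both computations rely on the commutator lemmas for $T$ against $T_{|u|^2}$ and $H_u^2$; without these, the passage from the abstract $\tilde S(t)$ to the concrete matrix \eqref{eqn:S} cannot be completed.
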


We extend the explicit formula to more general initial data, that are not necessarily rational functions,
 in the following corollary.
\begin{corollary}[A first generalization of the explicit formula]\label{cor:gen case}
Let $u_0\in H^{1/2}_+$ be a general initial condition.
Denote by $\{\ld_j^2\}_{j=1}^{\infty}$ the positive eigenvalues of the operator $H_{u_0}^2$.
We assume that $\ld_j>0$ for all $j\in\N$.
Choose a complex orthonormal basis $\{e_j\}_{j=1}^{\infty}$ of $\textup{Ran}(H_{u_0})$
consisting of eigenvectors of $H_{u_0}^2$
such that $H_{u_0}e_j=\ld_je_j$ for all $j\in \mathbb{N}^{\ast}$.
Denote $W(t)=e^{i\frac{t}{2}H_{u_0}^2}$ and $\beta_j=\frac{1}{\ld_j}(e_j,u_0)$.

We define an operator $S(t)$
on $\textup{Ran}(H_{u_0})$ in the following way.
Fix $j \in \mathbb{N}^*$,
and let $\ld_j^2$ be an eigenvalue of multiplicity $m_j$.
Moreover, let $M_j\subset\mathbb{N}^*$ be the set of all indices $k$ such that $H_{u_0}e_{k}=\ld_j e_{k}$.
Then, $S(t)$ is defined by
\begin{align*}
\big(S(t)e_j,e_k\big)=
\begin{cases}
\frac{\ld_j}{2\pi i(\ld_k^2-\ld_j^2)}
\Big(\ld_j e^{i\frac{t}{2}(\ld_k^2-\ld_j^2)}\cj{\beta}_j\beta_k
-\ld_k e^{i\frac{t}{2}(\ld_j^2-\ld_k^2)}\beta_j\cj{\beta}_k\Big), \text{ if } k\in\N\setminus M_j,\\
\frac{\ld_j^2}{2\pi}\cj{\beta}_j\beta_k t+(Te_j,e_k), \text{ if } k\in M_j.
\end{cases}
\end{align*}

\noi
Denote by $\bar{S}$ the closure of the operator $S$.

If the sequence $\{\beta_j\}_{j\in\N}$ is in $\ell^2$,
then there exists $g_0\in\textup{Ran}(H_{u_0})$ such that
$u_0=H_{u_0}(g_0)$. Moreover, for $\textup{Im}z>0$, the following formula for the
solution of the Szeg\"o equation with initial condition $u_0$ holds:
\begin{equation*}
u(t,z)=\frac{i}{2\pi}\Big(u_0,W(t)(\bar{S}- \bar{z}I)^{-1}W(t)g_0\Big).
\end{equation*}
\end{corollary}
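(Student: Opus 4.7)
The plan is a density/approximation argument. First I would verify that the hypothesis $\{\beta_j\}\in\ell^2$ produces the required $g_0$. Since $H_{u_0}^2$ is compact and self-adjoint, $\{e_j\}$ completed by an orthonormal basis of $\textup{Ker}(H_{u_0})$ forms an orthonormal basis of $L^2_+$. Using the standard fact $u_0\in\overline{\textup{Ran}(H_{u_0})}$ (which follows from $\Pi(u_0\bar h)=0$ whenever $h\in\textup{Ker}(H_{u_0})$), we have $u_0=\sum_j(u_0,e_j)e_j$. Under $\{\beta_j\}\in\ell^2$, the series $g_0:=\sum_j\beta_je_j$ converges in $\overline{\textup{Ran}(H_{u_0})}$, and the $\C$-antilinearity of $H_{u_0}$ combined with $H_{u_0}e_j=\ld_je_j$ yields
\begin{equation*}
H_{u_0}(g_0)=\sum_j\cj{\beta}_j\ld_j e_j=\sum_j(u_0,e_j)e_j=u_0.
\end{equation*}

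Next I would approximate $u_0$ by rational data $u_0^{(n)}\in\M(N_n)$ with $u_0^{(n)}\to u_0$ in $H^{1/2}_+$, which exists since $\bigcup_N\M(N)$ is dense in $L^2_+$ and density in $H^{1/2}_+$ is obtained by a standard truncation and mollification of the Fourier transform. By global well-posedness in $H^{1/2}_+$ \cite{pocov}, the corresponding solutions satisfy $u^{(n)}(t)\to u(t)$ in $H^{1/2}_+$ for every fixed $t$. Because the map $v\mapsto H_v$ is continuous from $H^{1/2}_+$ into the Hilbert--Schmidt class, $H_{u_0^{(n)}}^2\to H_{u_0}^2$ in operator norm. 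Hence the nonzero eigenvalues $\ld_j^{(n)}$ converge to $\ld_j$ with multiplicities preserved, one can choose eigenvectors $e_j^{(n)}$ so that the finite-rank spectral projectors converge, and consequently $W_n(t)=e^{i\frac{t}{2}H_{u_0^{(n)}}^2}\to W(t)$ strongly, $g_0^{(n)}\to g_0$ in $L^2_+$, and every matrix entry of the approximating operator $S_n(t)$ given by Theorem \ref{thm:general formula} converges to the corresponding entry of $S(t)$ in the statement.

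The main obstacle is the passage to the limit inside the resolvent $(\bar{S}-\bar{z}I)^{-1}$, since the diagonal block $\frac{\ld_j^2}{2\pi}\cj{\beta}_j\beta_k t+(Te_j,e_k)$ renders $S(t)$ unbounded in the infinite-dimensional case. To handle this, I would first identify the closure $\bar{S}(t)$ as a closed densely-defined operator on $\overline{\textup{Ran}(H_{u_0})}$, with a natural core $\mathcal{D}$ consisting of finite linear combinations of the $e_j$. The intertwining identity \eqref{comute T, H}, once $T^{\ast}$ is extended as announced in Section 3, realizes $T^{\ast}$ essentially as multiplication by $x$ conjugated through $H_{u_0}$; therefore $\bar{S}(t)$ differs from a self-adjoint operator by a bounded skew-adjoint perturbation and has real spectrum. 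Consequently $\bar{z}\in\rho(\bar{S}(t))$ for $\textup{Im}\,z>0$, with $\|(\bar{S}(t)-\bar{z}I)^{-1}\|\lesssim 1/|\textup{Im}\,z|$, and strong resolvent convergence $S_n(t)\to\bar{S}(t)$ follows from strong convergence on the core $\mathcal{D}$. Inserting these ingredients into the explicit formula of Theorem \ref{thm:general formula} for $u^{(n)}(t,x)$ and letting $n\to\infty$ proves the identity for real $x$; extending holomorphically in the spatial variable to $z\in\C_+$ (which, because $u(t,\cdot)$ belongs to the Hardy space, turns $x$ into $\bar z$ inside the resolvent) gives the stated formula.
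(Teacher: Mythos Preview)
Your approximation-by-rational-functions route is \emph{not} what the paper does. The paper's proof is much more direct: since the hypothesis $\{\beta_j\}\in\ell^2$ guarantees $u_0\in\textup{Ran}(H_{u_0})$ (equivalently, the existence of $g_0$), the entire chain of lemmas leading to Theorem \ref{thm:general formula} --- the identity $\hat u(\lambda)=(u,e^{i\lambda x}g)$, the definition of $S_\lambda(t)=P_{u_0}U^\ast(t)e^{i\lambda x}U(t)$ and $S(t)=U^\ast(t)T(t)U(t)$, the commutator and time-derivative computations --- goes through verbatim for this $u_0$. The only modification is that $S$ now acts on an infinite-dimensional space, so the infinitesimal generator of the semigroup $S_\lambda$ is $i\bar S$ rather than $iS$ itself; this is the sole reason the closure appears in the formula. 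No approximation is needed.

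Your argument, by contrast, has real gaps. First, your spectral claim is wrong: $\bar S(t)$ does \emph{not} have real spectrum. Already in the rational case (Lemma \ref{lemma:eigen T}) the eigenvalues of $S$ are the complex conjugates of the poles of $u$, hence lie strictly in the upper half-plane; the rank-one identity $S=S^\ast-\tfrac{1}{2\pi i}(\cdot,\tilde e)\tilde e$ shows $S$ is a skew perturbation of something self-adjoint, but that does not force real spectrum, and here it demonstrably fails. What you actually need is that the spectrum of $\bar S$ lies in $\overline{\C_+}$, so that $\bar z$ with $\textup{Im}\,z>0$ is in the resolvent set --- but your resolvent bound $\|(\bar S-\bar z I)^{-1}\|\lesssim 1/|\textup{Im}\,z|$ does not follow from ``self-adjoint plus bounded skew'' alone. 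Second, your approximating operators $S_n(t)$ live on $\textup{Ran}(H_{u_0^{(n)}})$, which are \emph{different} finite-dimensional subspaces of $L^2_+$, with different eigenbases $\{e_j^{(n)}\}$; making sense of ``matrix-entry convergence'' and strong resolvent convergence across varying spaces is a nontrivial issue you do not address. Third, the diagonal terms $(T^{(n)}e_j^{(n)},e_k^{(n)})$ involve the infinitesimal shift $T$ for $u_0^{(n)}$, and their convergence to $(Te_j,e_k)$ is far from automatic --- $T$ depends on $g$, which depends delicately on $u_0$. The paper's direct adaptation sidesteps all of this.
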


The condition $\{\beta_j\}_{j\in\N}\in\ell^2$ characterizes all initial data satisfying
$u_0\in\textup{Ran}H_{u_0}$. In particular, by \eqref{eqn: def g}, it is satisfied by all rational functions.
However, simple non-rational functions, like $\frac{e^{i\alpha x}}{x+i}$
with $\alpha>0$, do not satisfy it,
and hence Corollary \ref{cor:gen case} is not applicable.
In the following theorem, we extend the explicit formula to even more general initial data.

\begin{theorem}[Explicit formula for general data]\label{general case}
Let $u_0\in H^s_+$, $s>\frac{1}{2}$, $xu_0\in L^{\infty}(\R)$.
With the notations in Corollary \ref{cor:gen case}, we define an operator
$S^{\ast}(t)$ on $\textup{Ran}(H_{u_0})$ in the following way.
Fix $j \in \mathbb{N}^*$. If $\ld_j^2$ is an eigenvalue of multiplicity $m_j$ and
$M_j\subset\N$ is the set of all indices $k$ such that $H_{u_0}e_{k}=\ld_j e_{k}$,
then
\begin{align*}
(S^{\ast}(t)e_j,e_k)=
\begin{cases}
\frac{\ld_k}{2\pi i(\ld_k^2-\ld_j^2)}
\Big(\ld_k e^{i\frac{t}{2}(\ld_k^2-\ld_j^2)}\cj{\beta}_j\beta_k
-\ld_j e^{i\frac{t}{2}(\ld_j^2-\ld_k^2)}\beta_j\cj{\beta}_k\Big), \text{ if } k\in\N\setminus M_j, \\
\frac{\ld_k^2\cj{\beta}_j\beta_k}{2\pi}t+(T^{\ast}e_j,e_k), \text{ if } k\in M_j.
\end{cases}
\end{align*}

\noi
Let A be the closure of $S^{\ast}$.
Then, for $\textup{Im}z>0$, the solution of the Szeg\"o equation writes
\begin{align*}
u(t,z)=\lim_{\eps\to 0}\frac{i}{2\pi }\Big(W^{\ast}(t)(A-zI)^{-1}W^{\ast}(t)u_0,\frac{1}{1-i\eps z}\Big).
\end{align*}

\noi
Let $S^{\ast}_{\ld}$ be the semi-group of contractions
whose infinitesimal generator is $-iA$.
Then, the above formula is equivalent to
\begin{align*}
\ft{u}(t,\ld)=\lim_{\eps\to 0}\frac{1}{2\pi}\Big(W^{\ast}(t)S^{\ast}_{\ld}(t)W^{\ast}(t)u_0,\frac{1}{1-i\eps x}\Big)
, \quad \text{ a.e. } \ld\in\R.
\end{align*}

\end{theorem}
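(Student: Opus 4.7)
The plan is to deduce Theorem~\ref{general case} from Corollary~\ref{cor:gen case} in two steps: (i) dualize the $L^2$ pairing so that the operators act on $u_0$ rather than on $g_0$, and (ii) replace the (possibly nonexistent) element $g_0$ by the canonical $L^2_+$ regularization $(1-i\eps x)^{-1}$ of its formal value $g_0\equiv 1$.

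First I would verify by an entry-by-entry computation in the basis $\{e_j\}$ that the operator $S^\ast(t)$ defined in the theorem is precisely the Hilbert-space adjoint of the operator $S(t)$ from Corollary~\ref{cor:gen case}. On the off-diagonal block $k\notin M_j$ this reduces, after complex-conjugating the corollary's matrix entry and swapping the roles of $j$ and $k$, to the identities $\overline{\bar\beta_k\beta_j}=\bar\beta_j\beta_k$ and $\overline{e^{it(\ld_j^2-\ld_k^2)/2}}=e^{it(\ld_k^2-\ld_j^2)/2}$; on the diagonal block $k\in M_j$, where $\ld_j^2=\ld_k^2$, it reduces to the definition $(T^\ast e_j,e_k)=\overline{(Te_k,e_j)}$. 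Consequently, writing $A:=\overline{S^\ast}$ we have $A=(\bar S)^\ast$ and therefore $\bigl((\bar S-\bar zI)^{-1}\bigr)^\ast=(A-zI)^{-1}$.

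Second, I would first assume $\{\beta_j\}\in\ell^2$, so that Corollary~\ref{cor:gen case} applies and a genuine $g_0\in\textup{Ran}(H_{u_0})$ exists with $u_0=H_{u_0}(g_0)$. Moving the operators across the inner product, together with $W^\ast(t)=W(-t)$, rewrites the corollary's formula as
\[
u(t,z)=\frac{i}{2\pi}\bigl(W^\ast(t)(A-zI)^{-1}W^\ast(t) u_0,\,g_0\bigr),\qquad \textup{Im}\,z>0.
\]
Up to the last factor $g_0$ this is already the formula of the theorem. The substitution $g_0\rightsquigarrow (1-i\eps x)^{-1}$ is natural because the defining relation $u_0=\Pi(u_0\bar g_0)$ is \emph{formally} satisfied by $g_0\equiv 1$, while $(1-i\eps x)^{-1}\in L^2_+$ is a canonical $L^2_+$-regularization with $(1-i\eps x)^{-1}\to 1$ as $\eps\to 0^+$. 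Approximating a general $u_0\in H^s_+$ (with $xu_0\in L^\infty$) by rationals $u_0^{(n)}\in\M(n)$ (dense by \cite[Lemma~6.2.1]{Nikolskii}), applying the formula above for each $n$, and letting $n\to\infty$ via continuity of the Szeg\"o flow \cite[Theorem~1.1]{pocov} and of the spectral data $\{\ld_j^{(n)},\beta_j^{(n)}\}$ then yields the first formula of the theorem.

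Finally, the equivalent Fourier statement follows by representing the resolvent as the Laplace transform of the contraction semigroup, $(A-zI)^{-1}=-i\int_0^\infty e^{i\ld z}S^\ast_\ld\,d\ld$ for $\textup{Im}\,z>0$ (valid because $-iA$ is dissipative on $\textup{Ran}(H_{u_0})$), and inverting in the variable $z$. The main obstacle is step~(ii): for $u_0\notin\textup{Ran}(H_{u_0})$ there is no genuine $g_0$ available, so one must justify $(1-i\eps x)^{-1}$ as an independent regularizer and prove that the limit $\eps\to 0$ exists and equals $u(t,z)$. Here the hypothesis $xu_0\in L^\infty$ plays a decisive role: it provides the decay at infinity needed to control the pairings with $(1-i\eps x)^{-1}$ uniformly in $\eps$ (and in the approximation index $n$), and it is what legitimizes the interchange of the limits in $\eps$ and $n$.
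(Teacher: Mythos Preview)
Your step~(i) is essentially correct, and the dualized identity
\[
u(t,z)=\frac{i}{2\pi}\bigl(W^\ast(t)(A-zI)^{-1}W^\ast(t) u_0,\,g_0\bigr)
\]
is exactly what one needs when $g_0$ exists. The gap is in step~(ii). Your plan is to reach a general $u_0$ by approximating with rationals $u_0^{(n)}\in\M(n)$ and then passing to the limit in the formula. But the operators $W^{(n)}$, $A^{(n)}$, $S_\ld^{\ast,(n)}$ are built from the spectral data of $H_{u_0^{(n)}}^2$, which changes entirely with $n$: the number of eigenvalues grows, the eigenbases $\{e_j^{(n)}\}$ have no obvious limit, and the matrix entries $(T^\ast e_j^{(n)},e_k^{(n)})$ on the diagonal blocks have no a~priori control. ``Continuity of the spectral data'' in this strong sense is neither proved in the paper nor easy; in particular nothing guarantees that the resolvents $(A^{(n)}-zI)^{-1}$ converge to $(A-zI)^{-1}$ in any useful topology, or that the approximants can be chosen with $xu_0^{(n)}\in L^\infty$ uniformly. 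So the interchange of limits you invoke is not justified.

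The paper avoids this entirely by never approximating $u_0$. It first defines $T^\ast$ \emph{intrinsically} on $\textup{Ran}(H_u)$ by $T^\ast(H_uf)=xH_uf+\tfrac{1}{2\pi i}(u,f)=\Pi(xu\bar f)$, which makes sense for any $u\in H^{1/2}_+$ with $xu\in L^\infty$ even when $g$ (and hence $T$) does not exist; thus $S^\ast=U^\ast T^\ast U$ and its closure $A$ are defined directly for the given $u_0$. Proposition~\ref{prop:xu} propagates the condition $xu(t)\in L^\infty$, and Lemma~\ref{lim u} then gives the key replacement for Lemma~\ref{lemma:ft of u}:
\[
\hat u(t,\ld)=\lim_{\eps\to 0}\Bigl(u(t),\,e^{i\ld x}\tfrac{1}{1-i\eps x}\Bigr),
\]
where the hypothesis $xu_0\in L^\infty$ is used to upgrade weak to strong $L^2$ convergence of $H_{u(t)}\bigl(\tfrac{1}{1-i\eps x}\bigr)\to u(t)$. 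From this point the computation of Proposition~\ref{lemma:formula for the solution} is rerun verbatim with $T^\ast,S^\ast,S^\ast_\ld$ in place of $T,S,S_\ld$ (and with $-iA$ as the generator of $S^\ast_\ld$ by Proposition~\ref{prop:infinitesimal generator}), all for the \emph{fixed} $u_0$. The only limit in the final formula is in $\eps$; there is no second approximation to interchange with it. Your intuition that $(1-i\eps x)^{-1}$ regularizes the formal value $g_0\equiv 1$ is right, but the correct implementation is to rebuild the whole argument around $T^\ast$ for the given $u_0$, not to import the rational formula through a limit in the initial data.
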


\medskip

\begin{definition} \rm
A function $u_0\in \M(N)$ is called {\it generic} if the operator
$H_{u_0}^2$ has simple eigenvalues $0<\ld_1^2<\ld_2^2<\dots<\ld_N^2$ and $|(u_0,e_j)|\neq 0$,
for all $j=1,2,\dots,N$. We denote by $\M(N)_{\textup{gen}}$
the set of generic rational functions in $\M(N)$.

A function $u_0$ is called {\it strongly generic} if it is generic and, in addition, $|(u_0,e_j)|\neq |(u_0,e_k)|$ for all $k\neq j$.
We denote by $\M(N)_{\textup{sgen}}$
the set of strongly generic rational functions in $\M(N)$.
\end{definition}

\noi
The sets $\M(N)_{\textup{gen}}$ and $\M(N)_{\textup{sgen}}$ are indeed generic, in the sense that they are
open, dense subsets of $\M(N)$. 
As in \cite[Theorem 7.1]{PGSG}, we have that 
$\det \big(J_{2(m+n)}\big)_{1\leq m,n\leq N}\neq 0$
if and only if $H_{u}^{2k}(g)$, $k=1,2,\dots,n$, are linearly independent.
Decomposing $g,H^2g,\dots,H^{2(N-1)}g$
in the basis $\{e_j\}_{j=1}^N$,
we obtain that the determinant of the matrix
which contains these vectors as columns is:
\begin{align*}
\left\vert
\begin{matrix}
\nu_1 & \ld_1^2\nu_1 & \dots & \ld_1^{2(N-1)}\nu_1\\
\nu_2 & \ld_2^2\nu_2 & \dots & \ld_2^{2(N-1)}\nu_2\\
\vdots & \vdots & \ddots & \vdots\\
\nu_N & \ld_N^2\nu_N & \dots & \ld_N^{2(N-1)}\nu_N
\end{matrix}
\right\vert
=\nu_1\dots\nu_N
\left\vert
\begin{matrix}
1 & \ld_1^2 & \dots & \ld_1^{2(N-1)}\\
1 & \ld_2^2 & \dots & \ld_2^{2(N-1)}\\
\vdots & \vdots & \ddots & \vdots\\
1 & \ld_N^2 & \dots & \ld_N^{2(N-1)}
\end{matrix}
\right\vert,
\end{align*}

\noi
where $\nu_j:=\frac{1}{\ld_j}|(u,e_j)|$. Thus, the fact that
 $g,H^2g,\dots,H^{2(N-1)}g$ are linearly independent is equivalent to
$(u,e_j)\neq 0$, $j=1,2,\dots,N$ and $\ld_j$ are all distinct. Therefore,
\[\M(N)_{\textup{gen}}=\big\{u_0\in\M(N)\big|\det \big(J_{2(m+n)}\big)_{1\leq m,n\leq N}\neq 0\big\}\]

\noi
is an open, dense subset of $\M(N)$. By Theorem \ref{thm:action-angle} below, we obtain that $\chi:\M(N)_{\textup{gen}}\to \Omega$
(see \eqref{eq:chi} blow)
is a diffeomorphism. Since $\M(N)_{\textup{sgen}}$ corresponds, through $\chi$, to an open dense subset of $\Omega$, it results that
$\M(N)_{\textup{sgen}}$ is also generic.

\begin{definition} \rm
We say that {\it soliton resolution} holds in $H^s$ for a solution $u(t)$ of the Szeg\"o equation, if $u(t)$
can be written
as the sum of a finite number of solitons and a remainder $\eps(t,x)$ with the property that $\lim_{t\to\pm\infty}\|\eps(t,x)\|_{H^s}=0$.
\end{definition}

Using the above explicit formula for the solution, we prove the following result:

\begin{theorem}[Solition resolution for strongly generic data]\label{thm:soliton emergence}
Let $u_0\in\M(N)_{\textup{sgen}}$ be a strongly generic initial data for the Szeg\"o equation.
Then, the corresponding solution satisfies the property of soliton resolution in $H^s$ for all $s\geq 0$.
More precisely, with the notations in Theorem \ref{thm:general formula}, we have
\begin{align*}
u(t,x)=\sum _{j=1}^{N}e^{-it\ld_j^2}\phi_{C_j,p_j}(x-\tfrac{\ld_j^2\nu_j^2}{2\pi}t)+\eps(t,x),
\end{align*}

\noi
where $C_j=\frac{i\ld_j\bar{\beta}_j^2}{2\pi }$, $p_j=\textup{Re}(c_j(0))-i\frac{\nu_j^2}{4\pi}$,
and $\lim_{t\to\pm\infty}\|\eps(t,x)\|_{H^s_+}=0$ for all $s\geq 0$.
\end{theorem}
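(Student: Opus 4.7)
The plan is to start from the explicit formula of Theorem \ref{thm:general formula}. Expanding in the eigenbasis $\{e_k\}_{k=1}^N$ and using $(u_0,e_k)=\ld_k\cj\beta_k$ (which follows from \eqref{sym H_u} applied to $u_0=H_{u_0}g_0$), the formula reads
\begin{equation*}
u(t,x) = \frac{i}{2\pi}\sum_{k,l=1}^N \ld_k\cj\beta_k\cj\beta_l\, e^{-i(\ld_k^2+\ld_l^2)t/2}\,\cj{\bigl[(S(t)-xI)^{-1}\bigr]_{k,l}}.
\end{equation*}
Under the strong genericity assumption the velocities $c_j=\ld_j^2\nu_j^2/(2\pi)$ are pairwise distinct and nonzero; these are precisely the slopes of the linearly growing diagonal entries $S(t)_{j,j}=c_jt+\al_j$, with $\al_j:=(Te_j,e_j)$.

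Fix $j\in\{1,\dots,N\}$ and pass to the $j$-th moving frame $x=y+c_jt$. Setting $M(t,y):=S(t)-(y+c_jt)I$, the matrix $M$ in the basis $\{e_k\}$ has a bounded $(j,j)$ entry $\al_j-y$, while all other diagonal entries $(c_k-c_j)t+\al_k-y$ grow linearly with distinct nonzero slopes, and the off-diagonal entries $S(t)_{k,l}$ stay uniformly bounded but oscillate in $t$. Applying the Schur complement identity with respect to the $(j,j)$ block,
\begin{equation*}
\bigl[M^{-1}\bigr]_{j,j}=\bigl(\al_j-y-\mathbf{r}^T(M')^{-1}\mathbf{c}\bigr)^{-1},
\end{equation*}
and using that the $(N-1)\times(N-1)$ minor $M'$ is diagonally dominant (diagonal of order $t$, bounded off-diagonal) so that $(M')^{-1}=O(1/t)$ in operator norm, the Schur correction $\mathbf{r}^T(M')^{-1}\mathbf{c}$ is $O(1/t)$ with no $t$-independent contribution. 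The same diagonal-dominance argument also yields $[M^{-1}]_{k,l}=O(1/t)$ for every $(k,l)\neq(j,j)$. Consequently
\begin{equation*}
\lim_{t\to\pm\infty}[M^{-1}]_{j,j}(t,y)=\frac{1}{\al_j-y},\qquad \lim_{t\to\pm\infty}[M^{-1}]_{k,l}(t,y)=0 \text{ for } (k,l)\neq(j,j).
\end{equation*}

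Among the $N^2$ terms in the double sum, only $k=l=j$ survives in the limit, giving
\begin{equation*}
u(t,y+c_jt)=\frac{i\ld_j\cj\beta_j^2}{2\pi(\cj\al_j-y)}\,e^{-i\ld_j^2 t}+o(1),
\end{equation*}
which agrees with $e^{-i\ld_j^2 t}\phi_{C_j,p_j}(y)$ provided $p_j=\cj\al_j$. The identification $\textup{Re}(p_j)=\textup{Re}(\al_j)=\textup{Re}(c_j(0))$ is built into the notation ($c_j(t):=S(t)_{j,j}$ gives $c_j(0)=\al_j$), while the spectral identity
\begin{equation*}
\textup{Im}\,(Te_j,e_j)=\frac{\nu_j^2}{4\pi}
\end{equation*}
is verified directly in the $N=1$ case using the explicit form of $g_0$ and \eqref{eq: def T}, and extended in general using the commutation \eqref{comute T, H}.

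Finally I would upgrade pointwise convergence to $H^s_+$-convergence. The remainder $\eps(t,\cdot)$ is a rational function whose $2N$ near-doubled poles come in pairs: one pole of $u(t,\cdot)$ and the corresponding pole of the soliton sum, agreeing up to $O(1/t)$ in both position and residue (both facts extracted quantitatively from the matrix asymptotics above). Since $\textup{Im}(p_j)=-\nu_j^2/(4\pi)$ stays uniformly bounded away from zero, the $H^s_+$-norm of a simple fraction $r/(x-z)$ with $|\textup{Im}(z)|\geq c>0$ depends continuously on $|r|$ and $|\textup{Im}(z)|$ (and is finite for every $s\geq 0$), so the $O(1/t)$ matching of pole data yields $\|\eps(t)\|_{H^s_+}=O(1/t)\to 0$ for every $s\geq 0$. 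The main obstacle is the spectral identity $\textup{Im}\,(Te_j,e_j)=\nu_j^2/(4\pi)$, which fixes the exact imaginary part of the soliton pole; this is the only delicate input beyond the otherwise routine diagonal-dominance asymptotics of $M^{-1}$.
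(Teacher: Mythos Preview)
Your outline is close to the paper's argument, and the Schur-complement viewpoint is a perfectly good substitute for the cofactor expansion used there. Two points deserve attention.

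\medskip

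\textbf{The spectral identity.} Your proposed proof of $\textup{Im}(Te_j,e_j)=\nu_j^2/(4\pi)$ does not work. The commutation relation \eqref{comute T, H}, i.e.\ $T^{\ast}H_u=H_uT$, applied to $e_j$ and paired with $e_j$, gives only
\[
\ld_j(T^{\ast}e_j,e_j)=(H_uTe_j,e_j)=(H_ue_j,Te_j)=\ld_j\,\overline{(Te_j,e_j)},
\]
which is the tautology $(T^{\ast}e_j,e_j)=\overline{(Te_j,e_j)}$, true for any operator; it contains no information about $\textup{Im}(Te_j,e_j)$. The paper instead uses the rank-one identity $T-T^{\ast}=-\tfrac{1}{2\pi i}(\cdot,g)g$ (Lemma~\ref{lemma:relation S,Sast}, equation~\eqref{T-Tast}), from which
\[
2i\,\textup{Im}(Te_j,e_j)=\big((T-T^{\ast})e_j,e_j\big)=-\tfrac{1}{2\pi i}|(g,e_j)|^2=\tfrac{i}{2\pi}\nu_j^2
\]
is immediate. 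This is the missing ingredient you flagged as the ``main obstacle''; once you invoke \eqref{T-Tast} the obstacle disappears.

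\medskip

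\textbf{The $H^s$ upgrade.} Your route via pole-matching is workable but more delicate than necessary: it requires locating the eigenvalues of $S(t)$ to $O(1/t)$ and then controlling residues. The paper bypasses this entirely by staying in the variable $x$ throughout. Each entry of $(S-xI)^{-1}$ is a rational function of $x$; via the cofactor formula, the off-diagonal entries and the diagonal corrections are linear combinations of products $\prod_{m\in M}(x-A_m)^{-1}$ with $|M|\geq 2$, and the elementary estimate
\[
\Big\|\frac{1}{(x-A_j)(x-A_k)}\Big\|_{H^s(\R)}=O\Big(\frac{1}{t}\Big)\qquad(j\neq k),
\]
proved from the integral bound \eqref{elem integral} together with $|\textup{Im}A_j|=\nu_j^2/(4\pi)$ and the distinctness of the slopes $c_j$, gives the $H^s$ smallness of the remainder directly, with no moving frames and no eigenvalue perturbation theory. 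Your Schur-complement computation is essentially the same linear algebra, but carried out separately in each frame; the paper's global estimate is cleaner and yields the $H^s$ statement in one stroke.
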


\medskip

Studying the case of non-generic initial data $u_0\in\M(2)$, such that
$H_{u_0}^2$ has a double eigenvalue $\ld_1^2=\ld_2^2$,
we can prove that the soliton resolution holds in $H^s$
only for $0\leq s<1/2$.
It turns out that the $H^s$-norms with $s>1/2$ of such non-generic solutions
grow to $\infty$ as $t\to\pm\infty$.

\begin{theorem}[Partial solition resolution for non-generic data]\label{thm:conterexample}
Let $u_0\in\M(2)$ be such that $H_{u_0}^2$ has a double eigenvalue $\ld^2>0$.
Then the corresponding solution satisfies the property of soliton resolution in $H^s$ for $0\leq s<1/2$.
More precisely,
\begin{align*}
u(t,x)=&e^{-it\ld^2} \phi_{C,p}\big(x-\tfrac{\|u_0\|_{L^2}^2}{2\pi}t\big)
+\eps(t,x),
\end{align*}
\noi
where the first term is a soliton with
$|C|=\frac{\|u_0\|_{L^2}^2}{\sqrt{\pi}\|u_0\|_{\dot{H}^{1/2}}}$,
$\textup{Im}(p)=-\Big(\frac{\|u_0\|_{L^2}}{\|u_0\|_{\dot{H}^1}}\Big)^2$, and
$\eps(t,x)\to 0$ in the all the
$H^s$-norms with $0\leq s<1/2$.

However, $\eps(t,x)$ stays away from zero
and is bounded
in the $L^{\infty}$-norm and $H^{1/2}$-norm.
Moreover, $\lim_{t\to\pm\infty}\|\eps(t,x)\|_{H^s}=\infty$ if $s>1/2$.
\end{theorem}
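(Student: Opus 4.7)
The plan is to specialize the explicit formula of Theorem \ref{thm:general formula} to the case $N=2$ with $\ld_1=\ld_2=\ld$ and then read off the asymptotic behavior of the two poles of $u(t,\cdot)$ as $|t|\to\infty$. Fix an orthonormal basis $\{e_1,e_2\}$ of $\textup{Ran}(H_{u_0})$ with $H_{u_0}e_j=\ld e_j$, set $\beta_j=(g_0,e_j)$, and write $\beta=(\beta_1,\beta_2)^T$. Because the two eigenvalues coincide, the sets $M_j$ in \eqref{eqn:S} are both $\{1,2\}$, so in the basis $\{e_1,e_2\}$ the matrix $S(t)$ degenerates to the rank-one perturbation
\[
S(t)=T_0+\frac{\ld^2 t}{2\pi}\,\beta\beta^{\ast},\qquad T_0=\bigl((Te_j,e_k)\bigr)_{1\le j,k\le 2}.
\]
Moreover, since $H_{u_0}^2$ acts as $\ld^2 I$ on $\textup{Ran}(H_{u_0})$, the unitary $W(t)=e^{itH_{u_0}^2/2}$ acts as multiplication by $e^{it\ld^2/2}$ on this $2$-dimensional eigenspace, and the explicit formula collapses to
\[
u(t,x)=\frac{i\,e^{-it\ld^2}}{2\pi}\bigl(u_0,(S(t)-xI)^{-1}g_0\bigr),
\]
which is a rational function in $x$ with two simple poles $\mu_1(t),\mu_2(t)$, namely the eigenvalues of $S(t)$.

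The eigenvalues are the roots of the affine-in-$t$ quadratic
\[
\mu^{2}-\Bigl(\textup{tr}(T_0)+\tfrac{\ld^2\|\beta\|^2}{2\pi}t\Bigr)\mu+\bigl(\det(T_0)+c_0 t\bigr)=0,
\]
where $c_0=\textup{tr}(\textup{adj}(T_0)\,\beta\beta^{\ast})$. Using the identity $\ld^{2}\|\beta\|^{2}=\|u_0\|_{L^2}^{2}$ (a direct consequence of $u_0=H_{u_0}g_0$ together with $H_{u_0}^2|_{\textup{Ran}(H_{u_0})}=\ld^2 I$), a large-$|t|$ expansion of the roots yields
\[
\mu_1(t)=\frac{\|u_0\|_{L^2}^{2}}{2\pi}t+\zeta_1+O(1/t),\qquad \mu_2(t)=\zeta_2+O(1/t),
\]
for explicit constants $\zeta_1,\zeta_2\in\C$. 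A partial fraction decomposition of $(S(t)-xI)^{-1}$ then gives
\[
u(t,x)=\frac{i\,e^{-it\ld^2}}{2\pi}\biggl(\frac{A(t)}{\mu_1(t)-x}+\frac{B(t)}{\mu_2(t)-x}\biggr),
\]
with $A(t)$ bounded and tending to a nonzero limit, while $B(t)\to 0$. Matching the first term to the soliton ansatz \eqref{soliton} identifies it, up to an error that is $o(1)$ in every $H^s$, with $e^{-it\ld^{2}}\phi_{C,p}(x-ct)$ where $c=\|u_0\|_{L^2}^{2}/(2\pi)$; the stated formulas for $|C|$ and $\textup{Im}(p)$ are then recovered by evaluating the conservation laws $\|u_0\|_{L^2}^{2}$, $\|u_0\|_{\dot H^{1/2}}^{2}$ and $\|u_0\|_{\dot H^{1}}^{2}$ on the limiting soliton.

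The remainder is, modulo an error that is $o(1)$ in $H^s$ for every $s\ge 0$, the simple fraction $\eps(t,x)=\frac{i\,e^{-it\ld^{2}}B(t)}{2\pi\,(\mu_2(t)-x)}$. Since for $\textup{Im}(p')<0$ the Fourier-side formula
\[
\Bigl\|\frac{C'}{x-p'}\Bigr\|_{\dot H^s}^{2}=2\pi\,|C'|^{2}\,\frac{\Gamma(2s+1)}{(2\,|\textup{Im}(p')|)^{2s+1}}
\]
holds, the behavior of $\|\eps(t,\cdot)\|_{H^s}$ is governed entirely by how $|B(t)|$ and $|\textup{Im}(\mu_2(t))|$ decay. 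The central claim, to be established by direct computation from the quadratic above and the adjugate formula for $(S(t)-xI)^{-1}$, is that both quantities vanish at \emph{matched} rates, so that $|B(t)|/|\textup{Im}(\mu_2(t))|$ tends to a positive limit. Granting this, the listed norm behavior follows at once: $\|\eps(t,\cdot)\|_{L^\infty}$ and $\|\eps(t,\cdot)\|_{H^{1/2}}$ remain bounded and bounded away from $0$, $\|\eps(t,\cdot)\|_{H^s}\to 0$ for $0\le s<1/2$, and $\|\eps(t,\cdot)\|_{H^s}\to\infty$ for $s>1/2$.

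The principal obstacle is the matched vanishing of $B(t)$ and $\textup{Im}(\mu_2(t))$. Heuristically, if $\textup{Im}(\mu_2(t))$ stayed bounded away from $0$, the remainder would itself be a genuine second soliton and the solution would be indistinguishable from a strongly generic one, contradicting the degeneracy $\ld_1=\ld_2$; conversely, $\textup{Im}(\mu_2(t))$ cannot approach $0$ faster than $|B(t)|$, since otherwise $\|\eps\|_{L^\infty}$ would blow up, violating the uniform $H^{1/2}$ bound guaranteed by conservation of $\|u\|_{H^{1/2}_+}$. Quantifying this balance requires an explicit description of $T_0$, which must be extracted from the commutation relation $T^{\ast}H_{u_0}=H_{u_0}T$ together with the eigenstructure of $H_{u_0}^2$ on the $2$-dimensional space $\textup{Ran}(H_{u_0})$, and it is precisely the resulting matched rate that pins the critical Sobolev exponent at $s=1/2$.
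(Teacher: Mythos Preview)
Your approach is the same as the paper's---specialize the explicit formula, track the two eigenvalues of $S(t)$, and analyze the remainder pole---but the argument is incomplete at precisely the point you flag as ``the principal obstacle,'' and your heuristics do not close the gap.

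First, the paper does not work in an arbitrary basis. Using \eqref{eqn:cj{beta_j}beta_i in R} it rotates the eigenbasis so that $\beta_2=(g_0,e_2)=0$; then $S(t)$ has only the $(1,1)$ entry depending on $t$, and the discriminant $\Delta(t)=A^2t^2+Bt+C$ becomes explicit. The crux is then Lemma~\ref{lemma F(t)}: from \eqref{eq:S-Sast} one gets $d_1(0)=\overline{c_2(0)}$ and $\textup{Im}\,d_2(0)=0$, and a short argument shows $c_2(0)\neq 0$, hence $4A^2C-B^2>0$. A Taylor expansion of $\sqrt{\Delta}$ then gives the precise rates
\[
|B(t)|\sim \frac{|F(t)|}{|E_1-E_2|}\asymp t^{-2},\qquad |\textup{Im}\,\mu_2(t)|=\tfrac12|\textup{Im}\,F(t)|\asymp t^{-2},
\]
and it is exactly the positivity of $4A^2C-B^2$ that forces the $t^{-2}$ (rather than $t^{-1}$) behavior of $\textup{Im}\,F(t)$. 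Without the basis choice $\beta_2=0$ these computations become opaque, and your generic quadratic-root expansion does not by itself reveal that $\textup{Im}\,\zeta_2=0$.

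Second, your heuristic for the matched rates is one-sided. Conservation of $\|u\|_{H^{1/2}_+}$ indeed bounds $\|\eps\|_{L^\infty}$ from above, hence $|B(t)|/|\textup{Im}\,\mu_2(t)|$ from above. But nothing in your sketch rules out $|B(t)|\ll|\textup{Im}\,\mu_2(t)|$, in which case $\|\eps(t)\|_{H^s}\to 0$ for all $s$, and the growth claim for $s>1/2$ would fail. No conservation law forbids this; it has to be proved by the explicit computation above. So the step you label ``granting this'' is exactly the content of the theorem, and the paper obtains it by direct calculation (Lemma~\ref{lemma F(t)} together with \eqref{eqn:Imc_j(0)} and the relation \eqref{eq:S-Sast}), not by soft arguments.
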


As a consequence, we obtain the following result:
\begin{corollary}[Growth of high Sobolev norms]\label{Corollary}
The Szeg\"o equation admits solutions $u(t)$ whose high Sobolev norms $H^s$, for $s>1/2$, grow
to infinity:
\[\|u(t)\|_{H^s}\to\infty \textup{ as } t\to\pm\infty.\]

\noi
More precisely, there exists a solution $u$ of the Szeg\"o equation
 and a constant $C>0$ such that
$\|u(t)\|_{H^s}\geq C |t|^{2s-1}$ for sufficiently large $|t|$.
\end{corollary}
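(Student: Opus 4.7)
My plan is to derive Corollary \ref{Corollary} as a direct consequence of Theorem \ref{thm:conterexample}, splitting the argument into a soft triangle-inequality step for the qualitative blow-up and a harder quantitative step for the precise rate $|t|^{2s-1}$.

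First I would take $u_0\in\M(2)$ with $H_{u_0}^2$ having a double eigenvalue $\ld^2$, as in Theorem \ref{thm:conterexample}, so that
\[
u(t,x) \;=\; e^{-it\ld^2}\phi_{C,p}\big(x-\tfrac{\|u_0\|_{L^2}^2}{2\pi}t\big) + \eps(t,x).
\]
Since the first term is a unimodular phase followed by a translation of the fixed simple fraction $\phi_{C,p}$, its $H^s$-norm equals the time-independent constant $C_0:=\|\phi_{C,p}\|_{H^s}$. The triangle inequality then gives $\|u(t)\|_{H^s}\geq\|\eps(t)\|_{H^s}-C_0$, and the qualitative blow-up $\|u(t)\|_{H^s}\to\infty$ for $s>1/2$ follows immediately from the last assertion of Theorem \ref{thm:conterexample}.

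For the quantitative rate I would use Proposition \ref{PROP:invariant} to keep $u(t,\cdot)\in\M(2)$ for all $t$, so that by partial fractions $u(t,x)=\frac{A_1(t)}{x-q_1(t)}+\frac{A_2(t)}{x-q_2(t)}$, where $q_1(t),q_2(t)$ are (the conjugates of) the eigenvalues of the $2\times2$ matrix $S(t)$ produced by Theorem \ref{thm:general formula}. In the degenerate setting the leading coefficient $tP$ of $S(t)$ is rank-one, so one eigenvalue tracks the soliton pole $q_1(t)\sim\tfrac{\|u_0\|_{L^2}^2}{2\pi}t$ while the other is bounded. A finer expansion of the characteristic polynomial -- which should come out of the asymptotic analysis underlying the proof of Theorem \ref{thm:conterexample} -- yields the improved rate $|\textup{Im}\,q_2(t)|\sim c_1/t^2$, and the uniform bound on the $H^{1/2}$-norm inherited from the conservation of $Q+M$ then forces $|A_2(t)|\sim c_2/t^2$. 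Combining this with the Fourier identity $\widehat{1/(x-q)}(\xi)=-2\pi i\,e^{-iq\xi}\chi_{[0,\infty)}(\xi)$ gives
\[
\Big\|\tfrac{A_2(t)}{x-q_2(t)}\Big\|_{\dot H^s}^{2} \;=\; 2\pi\,\Gamma(2s+1)\,|A_2(t)|^{2}\big(2|\textup{Im}\,q_2(t)|\big)^{-2s-1} \;\sim\; t^{-4}\cdot t^{4s+2} \;=\; t^{4s-2},
\]
so that $\|\eps(t)\|_{\dot H^s}\gtrsim|t|^{2s-1}$, and the stated lower bound on $\|u(t)\|_{H^s}$ follows from Step 1 (the smaller contributions from $A_1(t)-Ce^{-it\ld^2}$ and from $q_1(t)-\nu t-p$ are then absorbed into constants, using the soliton-resolution statement of Theorem \ref{thm:conterexample} in $H^s$, $s<1/2$).

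The main obstacle is not the triangle inequality but the sharp $1/t^2$ rate for $|\textup{Im}\,q_2(t)|$: a generic rank-one perturbation of a $2\times 2$ operator only produces an $O(1/t)$ imaginary part for the subleading eigenvalue. The improved rate must come from a cancellation enforced by the self-adjointness of the leading coefficient $P=\frac{\ld^2}{2\pi}|\beta\rangle\langle\beta|$ in the degenerate block, together with the antilinear structure of the Hankel operator $H_{u_0}$; carefully tracking this cancellation through the definition of $S(t)$ in Theorem \ref{thm:general formula} is the principal computational step of the argument.
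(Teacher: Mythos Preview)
Your approach is essentially the one the paper takes, only more verbose. The paper's proof is two sentences: it observes that the $H^s$-norm of the soliton term is time-independent (exactly your Step~1), and then reads off the rate $\|u(t)\|_{H^s}\geq C|t|^{2s-1}$ directly from the estimates already established inside the proof of Theorem~\ref{thm:conterexample} (namely $\|R(t)\|_{\dot H^s}\geq \tfrac{C}{t^2}(|t|^{2s+1}-|t|)$, which is precisely the $1/t^2$ pole rate and Fourier computation you outline). Your Step~2 is thus a reconstruction of part of that proof rather than a new argument.

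One genuine slip: the conservation of $Q+M$ only gives you the \emph{upper} bound $|A_2(t)|\lesssim t^{-2}$ (once the two poles separate and the simple fractions become nearly $H^{1/2}$-orthogonal). It cannot give you the lower bound $|A_2(t)|\gtrsim t^{-2}$, which is what you actually need for $\|u(t)\|_{H^s}\geq C|t|^{2s-1}$. That lower bound comes either from the explicit residue computation in the proof of Theorem~\ref{thm:conterexample} (the factor $\bar F(t)/(\bar E_1-\bar E_2)\sim t^{-2}$ with a provably nonzero coefficient, thanks to Lemma~\ref{lemma F(t)}), or equivalently from the part of Theorem~\ref{thm:conterexample}'s statement asserting that $\|\eps(t)\|_{H^{1/2}}$ stays \emph{away from zero}. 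You should cite one of these rather than conservation.
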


\begin{remark} \rm
Corollary \ref{Corollary}
presents an example of solutions
whose high Sobolev norms grow to infinity.
We could observe this
phenomenon by considering
non-generic initial data $u_0$ such that the operator $H_{u_0}^2$
has a double eigenvalue.
We believe that the non-dispersive character of the Szeg\"o
equation plays an important role in the occurrence of this phenomenon.
For example, consider the dispersionless NLS, $iu_t = |u|^2 u$.
Then, $u(t, x) = \phi(x) \exp(-i |\phi(x)|^2t)$
with smooth $\phi$ is a solution, satisfying $\|u(t)\|_{H^s} \sim |t|^s$ for $s \in \mathbb{N}$.
However, the situation is more subtle for the Szeg\"o equation,
 due to the conservation of the $H^{1/2}$-norm.
In particular, this explains why, for the Szeg\"o equation,
only the $H^s$-norms with $s>1/2$ grow to infinity.

Corollary \ref{Corollary}  shows  that the energy is supported on
higher frequencies while the mass is supported on lower frequencies.
This phenomenon is
called ``forward cascade" and  is consistent with some
predictions in the weak turbulence theory.

Previously,
Bourgain constructed, in
\cite{Bourgain1995, Bourgain1995bis, Bourgain1996}, solutions
with Sobolev norms growing to infinity.
He considered, however, Hamiltonian PDEs
involving a spectrally defined Laplacian.
For general (dispersive) Hamiltonian PDEs,
such a phenomenon is not known, but there are several
partial results in this direction.
In \cite[Corollary 5]{PGSG}, G\'erard and Grellier noticed
the growth of Sobolev norms for the Szeg\"o equation on $\T$.
However, their construction of a sequence of solutions $u^{\eps}(t^{\eps})$
whose Sobolev norms become larger depends on the small parameter $\eps$.
In \cite{I team}, Colliander, Keel, Staffilani, Takaoka, and Tao constructed
solutions for the defocusing cubic NLS on $\T^2$ whose high Sobolev norms
become greater than any fixed constant at some time. Kuksin considered in \cite{Kuksin}
the case of small dispersion NLS, $-i\partial_tu+\delta\Delta u=|u|^2u$,\footnote{Note that
this can be considered as a perturbation of the dispersionless NLS. See p.138 in \cite{BourgainParkCity}}
with odd periodic boundary condition, where $\delta$ is a small parameter.
He proved that Sobolev norms of solutions with relatively generic data of unit mass,
grow larger than a negative power of $\delta$. However, these constructions
do not give an example of solution such that
$\sup_{t}\|u(t)\|_{H^s}=\infty$.
\end{remark}

\medskip

In the following theorem
we introduce generalized action-angle coordinates for the Szeg\"o equation in the case of generic rational functions.

\begin{theorem}[Generalized action-angle coordinates]\label{thm:action-angle}
For $u\in\M(N)_{\textup{gen}}$ denote by $0<\ld_1^2<\ld_2^2<\dots\ld_N^2$ the simple positive
eigenvalues of $H_u^2$ and by $\{e_j\}_{j=1}^N$ an orthonormal basis of $\textup{Ran}(H_u)$
such that $H_ue_j=\ld_je_j$. Denote $\nu_j=|(g,e_j)|$, $\phi_j=\arg (g,e_j)$,
and $\gamma_j=\textup{Re}\,(Te_j,e_j)$.

Set $\Omega:=(\R_+^{\ast})^N\times \{0<x_1<x_2\dots<x_N\}\times \T^N\times \R^N$. The application $\chi:\M(N)_{\textup{gen}}\to \Omega$
defined by
\begin{align} \label{eq:chi}
\chi (u)=\Big(\{2\ld_j^2\nu_j^2\}_{j=1}^N,\{4\pi\ld_j^2\}_{j=1}^N,\{2\phi_j\}_{j=1}^N,\{\gamma_j\}_{j=1}^N\Big),
\end{align}

\noi
is a symplectic diffeomorphism.
Moreover, $2\ld_j^2\nu_j^2$, $4\pi\ld_j^2$, $2\phi_j\in \T$, $\gamma_j\in\R$, $j=1,2,\dots,N$ are
generalized action-angle coordinates for the Szeg\"o equation on the real line.
\end{theorem}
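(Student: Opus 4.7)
The plan is to verify in turn that $\chi$ is well-defined, a diffeomorphism, and symplectic, and that the Hamiltonian becomes a function of the action variables only. The key computational input is the spectral decomposition of $u$ in the eigenbasis of $H_u^2$.

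\textbf{Step 1: Well-definedness.} In the generic case the eigenvalues $\ld_j^2$ of $H_u^2$ are simple, so each eigenvector $e_j$ is determined up to a unit complex scalar. The $\C$-antilinearity of $H_u$ together with $H_u e_j = \ld_j e_j$ and $\ld_j > 0$ restricts this phase to $\pm 1$, and under $e_j \mapsto -e_j$ the coordinates $\nu_j^2$, $2\phi_j \in \T$, and $\g_j$ are all invariant. Both $\M(N)_{\textup{gen}}$ (as an open subset of the $4N$-dimensional manifold $\M(N)$) and $\Omega$ have real dimension $4N$. Smoothness of $\chi$ follows from analytic perturbation theory for the simple eigenvalues of the self-adjoint operator $H_u^2$.

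\textbf{Step 2: Hamiltonian in the action variables.} Expanding $g = \sum_j \beta_j e_j$ and using $\C$-antilinearity, one has
\begin{equation*}
u = H_u(g) = \sum_{j=1}^N \cj{\beta}_j \ld_j e_j, \qquad H_u^{2(k-1)} u = \sum_{j=1}^N \cj{\beta}_j \ld_j^{2k-1} e_j,
\end{equation*}
so $J_{2k}(u) = (u, H_u^{2(k-1)} u) = \sum_j \ld_j^{2k} \nu_j^2$. In particular $Q(u) = \sum \ld_j^2 \nu_j^2$ and $E(u) = 2 J_4(u) = 2\sum \ld_j^4 \nu_j^2$, which in the variables $I_j^1 := 2\ld_j^2 \nu_j^2$ and $I_j^2 := 4\pi \ld_j^2$ reads
\begin{equation*}
E(u) = \frac{1}{4\pi}\sum_{j=1}^N I_j^1 I_j^2.
\end{equation*}
With the canonical form $\omega_0 = \sum_j dI_j^1 \wedge d(2\phi_j) + \sum_j dI_j^2 \wedge d\g_j$ on $\Omega$, Hamilton's equations would therefore give $\dot{\ld}_j = \dot{\nu}_j = 0$, $\tfrac{d}{dt}(2\phi_j) = \ld_j^2$, $\dot{\g}_j = \tfrac{\ld_j^2 \nu_j^2}{2\pi}$, precisely the flow we can read off from the explicit formula in Theorem \ref{thm:general formula} (and which is consistent with the soliton positions and phases in Theorem \ref{thm:soliton emergence}).

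\textbf{Step 3: The symplectic property.} The heart of the proof is $\chi^{\ast} \omega_0 = \omega|_{\M(N)_{\textup{gen}}}$. I would establish this via the Poisson brackets with respect to $\omega$:
\begin{equation*}
\{I_j^1, I_k^1\}_\omega = \{I_j^2, I_k^2\}_\omega = \{I_j^1, I_k^2\}_\omega = 0,\quad \{I_j^1, 2\phi_k\}_\omega = \{I_j^2, \g_k\}_\omega = \delta_{jk},
\end{equation*}
with the remaining mixed brackets vanishing. The involution of the actions reduces to the involution of $\{J_{2k}\}_k$, which is standard from the Lax pair structure of Proposition \ref{th:Lax pair} (they are all conserved under each other's flows). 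To compute the action-angle brackets one differentiates the defining spectral relations $H_u e_j = \ld_j e_j$ and $u = H_u g$ along the Hamiltonian vector fields of $\ld_j^2$ and $\ld_j^2\nu_j^2$, extracting the induced evolutions of $e_j$, $\beta_j$, and of the infinitesimal shift $T$ through the commutation identity \eqref{comute T, H}. The structure here is modelled on \cite[Theorem 7.1]{PGSG} for the torus case.

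\textbf{Step 4: Bijectivity.} Once $\chi$ is a local diffeomorphism (via Steps 1, 3 and a nondegeneracy check on the derivative), injectivity follows by reconstructing $u$: given the spectral data, the eigenvalues $\ld_j$ of $H_u$ are specified, the coefficients $\beta_j = (g,e_j) = \nu_j e^{i\phi_j}$ give $g$ in the abstract eigenbasis, and the matrix of $T$ in this basis is determined by \eqref{eqn:S}-type relations from the off-diagonal algebra $[T,H_u]$ together with the diagonal data $\g_j$; a Kronecker-type argument then realizes the abstract data as an actual rational function in $\M(N)_{\textup{gen}}$. Surjectivity onto $\Omega$ is proved by reversing this construction.

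\textbf{Main obstacle.} The most delicate point is Step 3, and within it the brackets involving $\g_j$. Whereas $\ld_j$, $\nu_j$ and $\phi_j$ are spectral invariants directly attached to the operator $H_u$, the quantity $\g_j = \textup{Re}(Te_j,e_j)$ depends on the shift operator $T$, whose deformation along the Hamiltonian flows of $\ld_j^2$ and $\ld_j^2\nu_j^2$ must be extracted from \eqref{comute T, H} and the Lax equation $\tfrac{d}{dt}H_u = [B_u,H_u]$. Once these Poisson brackets are pinned down, the conclusion that $\chi$ is a symplectic diffeomorphism and the identification of the Szeg\"o flow with the linear translation on $\T^N \times \R^N$ predicted in Step 2 follow simultaneously.
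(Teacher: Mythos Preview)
Your overall architecture is right, and Steps 1--2 match the paper closely. But there are three places where the plan is missing a genuine idea that the paper supplies.

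\textbf{Action--angle brackets.} In Step 3 you propose to compute $\{I_j^1,2\phi_k\}$ and $\{I_j^2,\gamma_k\}$ by differentiating the spectral relations along the Hamiltonian flows of $\ld_j^2$ and $\ld_j^2\nu_j^2$. The difficulty is that these individual functionals have no obvious Lax pair, so you cannot directly track $e_j$, $\beta_j$, and $T$ along their flows. The paper instead works with the full Szeg\"o hierarchy: each $J_{2n}$ generates a flow with an explicit Lax pair $[B_{u,n},H_u]$, from which one computes $\{J_{2n},\phi_j\}=\tfrac{1}{4}\ld_j^{2n-2}$ and $\{J_{2n},\gamma_j\}=\tfrac{n-1}{4\pi}\ld_j^{2n-2}\nu_j^2$. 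Combining this with $J_{2n}=\sum_k\ld_k^{2n}\nu_k^2$ for $n=1,\dots,2N$ gives a Vandermonde-type linear system whose unique solution forces $\{\ld_k^2\nu_k^2,2\phi_j\}=\tfrac{1}{2}\delta_{kj}$, $\{\ld_k^2,\gamma_j\}=\tfrac{1}{4\pi}\delta_{kj}$, and the other action--angle brackets to vanish. This indirection through the hierarchy is the key trick you are missing.

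\textbf{Angle--angle brackets.} You assert ``the remaining mixed brackets vanishing'' without mechanism. Nothing in the Lax structure or in \eqref{comute T, H} alone tells you that $\{\phi_j,\phi_k\}$, $\{\gamma_j,\phi_k\}$, $\{\gamma_j,\gamma_k\}$ vanish, and this is in fact the longest computation in the paper. The method is: first use the Jacobi identity with the actions to show these brackets depend only on the action variables; then introduce auxiliary functionals $J_1=(u,g)$, $J_3=(H_u^2u,g)$, $A=(Tu,u)$, $C=(Tu,g)$, compute $\{J_1,J_3\}$ and $\{A,C\}$ directly from their Hamiltonian vector fields, and expand the results in the eigenbasis to read off the vanishing of the angle--angle brackets. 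Your plan has no analogue of this step.

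\textbf{Surjectivity.} ``Reversing the construction'' is not enough: given abstract data $(\ld_j,\nu_j,\phi_j,\gamma_j)$ you must produce an actual $u\in\M(N)_{\textup{gen}}$, and in particular verify that the matrix you assemble really arises as the infinitesimal shift $T$ on $\textup{Ran}(H_u)$ for some rational $u$ with the correct pole structure. The paper avoids this inverse construction entirely: it shows $\chi$ is a proper map (using the explicit formula for $u$ in terms of $(T-xI)^{-1}$ to get uniform decay at infinity, hence $L^2$-compactness), and a proper local diffeomorphism onto a connected target is automatically surjective.
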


As a corollary, we obtain that in the generic case, the trajectories of the Szeg\"o
equation spiral around toroidal-cylinders $\T^N\times\R^N$, $N\in\N^{\ast}$.

\begin{corollary}[Lagrangian toroidal cylinders]\label{cor:toroidal cilinders}
Let $u_0\in\M(N)_{\textup{gen}}$. Consider
\begin{align}\label{TC(u)}
TC(u_0):=\Big\{u\in\M(N)_\textup{gen}|
& H_u^2, H_{u_0}^2 \text{ have same eigenvalues } \ld_j^2 \text{ and same }\nu_j\Big\}.
\end{align}

\noi
Then, $u(t)\in TC(u_0)$ for all $t\in\R$, and the set $TC(u_0)$ is diffeomorphic to a toroidal cylinder
$\T^N\times\R^N$ parameterized by the
coordinates $(2\phi_j,\gamma_j)_{j=1}^{N}$, where $\gamma_j\in\R$, $2\phi_j\in \T$.
\end{corollary}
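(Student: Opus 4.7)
The plan is to deduce both assertions directly from the action-angle coordinate system $\chi$ supplied by Theorem \ref{thm:action-angle}. The first claim, that $u(t) \in TC(u_0)$ for every $t \in \R$, amounts to showing that both sets of data $\{\ld_j^2\}_{j=1}^N$ and $\{\nu_j\}_{j=1}^N$ are preserved along the flow. The second claim is essentially a restatement of the fact that fixing the $N$ action coordinates in the symplectic model $\Omega = (\R_+^{\ast})^N\times\{0<x_1<\dots<x_N\}\times \T^N\times \R^N$ produces a leaf that is exactly $\T^N\times\R^N$.

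For the conservation of $\{\ld_j^2\}$, I would invoke the Lax pair corollary: formula \eqref{eqn:H_u} gives $H_{u(t)}^2 = U(t)H_{u_0}^2 U(t)^{\ast}$ with $U(t)$ unitary, so $H_{u(t)}^2$ and $H_{u_0}^2$ are unitarily equivalent and in particular have the same eigenvalues. Hence the first $N$ coordinates $4\pi \ld_j^2$ of $\chi(u(t))$ coincide with those of $\chi(u_0)$. For the conservation of $\nu_j$, by Theorem \ref{thm:action-angle} the quantities $2\ld_j^2\nu_j^2$ are action coordinates, hence conserved along the Hamiltonian flow of the Szeg\"o equation; combining this with the conservation of $\ld_j^2$ gives $\nu_j(u(t)) = \nu_j(u_0)$. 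Therefore $u(t)\in TC(u_0)$ for all $t\in\R$.

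For the geometric description, apply $\chi$ to $TC(u_0)$. By the very definition \eqref{TC(u)}, the values of $\{\ld_j^2\}$ and $\{\nu_j\}$ are fixed on $TC(u_0)$, so $\chi$ maps $TC(u_0)$ into the fiber
\[
\{\bigl(2\ld_j^2\nu_j^2\bigr)_{j=1}^N\}\times\{\bigl(4\pi\ld_j^2\bigr)_{j=1}^N\}\times \T^N\times\R^N,
\]
which is naturally identified with $\T^N\times\R^N$, parameterized precisely by the remaining angle-type coordinates $(2\phi_j,\g_j)_{j=1}^N$. Since $\chi$ is a diffeomorphism of $\M(N)_{\textup{gen}}$ onto $\Omega$ and the above fiber is a smooth submanifold of $\Omega$, the restriction of $\chi$ to $TC(u_0)$ is a diffeomorphism onto this fiber, proving the corollary.

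No part of this is expected to be genuinely difficult once Theorem \ref{thm:action-angle} is granted; the only delicate point is making sure that the fiber of $\chi$ over a fixed choice of $(\ld_j,\nu_j)$ is non-empty and exactly $TC(u_0)$, i.e.\ that $\chi$ is indeed a bijection with the stated image so that distinct elements of $TC(u_0)$ correspond to genuinely distinct values of $(2\phi_j,\g_j)$. This is subsumed by the fact that $\chi$ is a diffeomorphism onto $\Omega$ in Theorem \ref{thm:action-angle}, so no further work is required.
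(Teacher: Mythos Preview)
Your proposal is correct and follows essentially the same approach as the paper: the paper's proof consists of a single sentence observing that fixing $\ld_j$ and $\nu_j$, the diffeomorphism $\chi$ of Theorem \ref{thm:action-angle} restricts to a diffeomorphism $TC(u_0)\to\T^N\times\R^N$. Your version simply spells out in more detail why $u(t)\in TC(u_0)$ (via the Lax pair and the conservation of the actions) and why the restriction of $\chi$ is a diffeomorphism onto the fiber, which the paper leaves implicit.
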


It seems difficult to extend Theorem \ref{thm:action-angle}
and Corollary \ref{cor:toroidal cilinders} to arbitrary generic functions,
which are not necessarily rational, as we did in Theorem \ref{general case}.
The main reasons are the lack of compactness
and the fact that we are unable to characterize the conditions
$u_0\in H^s_+$, $s>1/2$ and $xu_0(x)\in L^{\infty}(\R)$ in terms of the
spectral data.

\medskip

The present paper was inspired by \cite{PGSGIT}, where G\'erard and Grellier
introduced action-angle coordinates for
the Szeg\"o equation on $\T$. However, \cite{PGSGIT} does not treat the question
of soliton resolution and growth of high Sobolev norms.
Different difficulties are to be overcome in the two settings. In the case of $\R$,
these difficulties are mostly related to the infinitesimal shift operator $T$
in \eqref{eq: def T},
which does not appear in the case of $\T$.

\medskip

\subsection{Structure of the paper}
We conclude this introduction by discussing the structure of the paper
with some details.
In Section 2,
we prove Theorem \ref{thm:general formula},
i.e. find an explicit formula for the solution
of the Szeg\"o equation with rational function initial condition.
 In the case of other completely integrable equations
 like KdV and one dimensional cubic NLS,
 an explicit formula for solutions
was determined by the inverse scattering method
\cite{Ablowitz, Deift, Gardner}.
Since in our case the operator $H_u$ is compact,
we will not apply the inverse scattering method. We find a direct approach
to solve the inverse spectral problem for the Hankel operator $H_u$,
using the Lax pair structure
and the commutation relation \eqref{comute T, H} between
the operator $H_u$ and the infinitesimal shift $T$.

The inverse spectral problem for Hankel operators was considered in several papers,
among which we cite \cite{Abakumov, MPT}. Our results are
more precise than the previous ones and allow us to have a formula
for the symbol $u$ of the Hankel operator $H_u$ only in terms of the spectral data.

Let us describe our strategy in Section 2.
First we notice that
$\hat{u}(\ld)=(u,e^{i\ld x}g)$, $\ld>0$.
Then, we introduce the operators $S_\ld(t)=P_{u_0}U^{\ast}(t)T_{\ld}(t)U(t)$,
$S(t)=U^{\ast}(t)T(t) U(t)$ acting on
 $\textup{Ran}(H_{u_0})$. Exploiting the Lax pair structure,
we obtain that
\begin{equation}\label{formula sol}
u(t,x)=\frac{i}{2\pi}\Big(u_0,W(t)(S-xI)^{-1}W(t)g_0\Big).
\end{equation}

\noi
Since $S$ is defined using $U(t)$ and since the definition of
$U(t)$ \eqref{eqn:d_tB_u} depends on $u(t)$ itself, the above formula is a vicious circle.
To break it, we determine $S$ without using $U(t)$.
The explicit expression for $S$ is obtained by computing the commutator $[H_{u_0}^2,S]$
and the derivative $\frac{d}{dt}S(t)$.

\medskip

In Section 3, we prove Corollary \ref{cor:gen case} and Theorem \ref{general case}.
The proof of Theorem \ref{general case} uses an approximation argument,
based on the remark that $u\in\cj{\textup{Ran}(H_u)}$
for all $u\in H^{1/2}_+$. The crucial step is to define the
``adjoint of the infinitesimal shift operator", $T^{\ast}$,
 for functions which are not necessarily
rational functions (it seems more delicate to define the operator $T$ directly).

Notice that in Theorem \ref{thm:general formula},
$S$ is a matrix whose eigenvalues are not real
and thus the inverse $(S-xI)^{-1}$ can be explicitly computed.
The result obtained in Theorem \ref{general case}
is weaker. The operator $S^{\ast}$ acts between infinite dimensional spaces.
Explicitly computing
$(A-zI)^{-1}$ or the semi-group $S^{\ast}_{\ld}$ comes down to solving
an infinite system of linear differential equations. Therefore, Theorem \ref{general case}
actually states that we can transform our nonlinear infinite dimensional dynamical system into a linear one.

\medskip

In Section 4, we prove Theorem \ref{thm:soliton emergence}.
The soliton
resolution conjecture is believed to be true for many dispersive
equations for which the non-linearity is not strong enough
to create finite-time blow-up.
However, this was proved only for few equations like KdV \cite{KdV}
and one dimensional cubic NLS \cite{Novikov, Manakov}, for which an explicit formula for
the solution is available. For KdV, soliton resolution was proved in $L^{\infty}$
and it was noticed that it is unlikely to hold in $H^1(\R)$
(the remainder may carry a part of the initial energy).
For NLS, soliton resolution was proved in $L^{2}$.
In this case, in addition to solitons, the solution contains
a radiation term, which is a solution of the linear Schrodinger equation.
For both KdV and NLS, the
conjecture holds only for "generic" data.
In Theorem \ref{thm:soliton emergence} we prove that
for strongly generic, rational functions
solutions of the Szeg\"o equation, soliton resolution holds
in all $H^s$, $s\geq 0$.

\medskip

In Section 5, we prove Theorem \ref{thm:conterexample}
and Corollary \ref{Corollary}. We show that soliton resolution still holds,
even for non-generic solutions, but only in $H^s$, $0\leq s<1/2$.
This is probably due to
the fact that $H^{1/2}$ is the space of critical regularity.

The starting point in proving Theorems \ref{thm:soliton emergence} and \ref{thm:conterexample}
 is the explicit formula found in section 2, which we are able to write as a sum of simple
fractions $\frac{C_j(t)}{x-\cj{E}_j(t)}$, $j=1,2,\dots,N$. The key remark is that
the complex conjugates of the poles of $u(t)$, $E_j(t)$, are the eigenvalues of the operator
$T(t)$ acting on $\textup{Ran}(H_{u(t)})$. In the strongly generic case, the eigenvalues of $T(t)$
satisfy $E_j(t)=a_jt+b_j+O(\frac{1}{t})$ as $t\to\pm\infty$ with $a_j\neq 0$ and $\textup{Im}(b_j)\neq 0$.
This leads to the soliton resolution $u(t,x)=\sum_{j=1}^N\frac{C_j(t)}{x-\bar{a}_jt-\bar{b}_j}+\eps(t,x)$
in $H^s$ for all $s\geq 0$.
In the non-generic case, there is $j_0$ such that $E_{j_0}(t)=\textup{Re}(b_{j_0})+O(\frac{1}{t})$
as $t\to\pm\infty$. Then, $\textup{Im}(E_{j_0})=O(\frac{1}{t})$
and thus, one of the poles of the solution approaches the real line as $|t|\to\infty$.
This causes $\|u(t)\|_{H^s}$ to grow to $\infty$ if $s>1/2$.
\medskip

In Section 6, we prove Theorem \ref{thm:action-angle} and Corollary \ref{cor:toroidal cilinders}.
The Szeg\"o equation is an infinite dimensional, completely integrable system.
The Lax pair structure yields the existence of an
infinite sequence of prime integrals
$J_{2n}=(u,H_u^{2n-2}u)$, $n\in\N$.
Since the finite dimensional manifolds $\M(N)$ are invariant under the flow,
by restricting the Szeg\"o equation to $\M(N)$, we obtain a  $4N$-dimensional,
completely integrable system. The common level sets of the prime integrals $J_{2n}$ are not compact.
Then, a generalization of the Liouville-Arnold
theorem \cite{FS,FGS} to the case of a $4N$-dimensional, completely integrable system, with non-compact level sets, states
the {\it existence} of generalized action-angle coordinates, if certain conditions are satisfied.
In these coordinates ($2N$ invariant action coordinates, $k$ angle coordinates belonging to $\T$, and $2N-k$
generalized angle coordinates belonging to $\R$) the equation can be easily integrated.
 In Theorem \ref{thm:action-angle}, we {\it explicitly introduce}
generalized action-angle coordinates in terms of the spectral data.

Our strategy is to use the Szeg\"o hierarchy, i.e. the
the infinite family of completely integrable systems corresponding to the
Hamiltonian vector fields of $J_{2n}$.
The difficulty consists in
proving that $\gamma_j=\textup{Re}\,(Te_j,e_j)$ are the generalized angles.

\section{Explicit formula for the solution in the case of rational function initial data }

\bigskip

In this section we find the explicit formula for the solution in the case of
rational functions data.

\begin{lemma}\label{lemma:g=1-b}
Let $u=\frac{A}{B}\in\M(N)$, where $A$ and $B$ are relatively prime, $B(0)=1$,
$B(x)=(x-p_1)^{m_1}\dots(x-p_k)^{m_k}$, $m_1+\dots m_k=N$,
and $\textup{Im}(p_j)<0$ for all $j=1,2,\dots,k$. Then $\textup {Ker} (H_u)=b_uL^2_+$, where
\begin{equation*}
b_u=\prod_{j=1}^k\frac{(x-\bar{p}_j)^{m_j}}{(x-p_j)^{m_j}}
\end{equation*}
and
\begin{equation}\label{eqn:g=1-b}
g=1-b_u.
\end{equation}
\end{lemma}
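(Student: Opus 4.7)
The plan is to identify $\textup{Ker}(H_u)$ as $b_uL^2_+$ and then read off $g=1-b_u$ from the equation $H_u(g)=u$. The cornerstone is an algebraic cancellation: on the real line,
\begin{equation*}
b_u(x)\,\overline{u(x)}=\frac{\prod_j(x-\bar p_j)^{m_j}}{\prod_j(x-p_j)^{m_j}}\cdot\frac{\overline{A(x)}}{\prod_j(x-\bar p_j)^{m_j}}=\frac{\overline{A(x)}}{B(x)},
\end{equation*}
which extends to a rational function on $\C$ with poles only in the lower half-plane (at the $p_j$) and whose numerator has degree at most $N-1<\deg(\text{den})$. Hence $b_u\bar u\in L^2_+\cap L^\infty$, equivalently $u\bar b_u\in L^2_-\cap L^\infty$. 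This ``pole-reflection'' identity is the heart of the proof; everything else is bookkeeping.

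Given the identity, the inclusion $b_uL^2_+\subseteq\textup{Ker}(H_u)$ follows at once: for any $h\in L^2_+$ one has $H_u(b_uh)=\Pi((u\bar b_u)\bar h)$, and the product of $u\bar b_u\in L^2_-\cap L^\infty$ with $\bar h\in L^2_-$ lies in $L^2_-$, which $\Pi$ annihilates. For the reverse inclusion I would match codimensions. The antilinear symmetry \eqref{sym H_u} yields $\textup{Ker}(H_u)=\textup{Ran}(H_u)^\perp$ in the $\R$-linear sense, and Proposition \ref{PROP:kronecker} gives $\dim\textup{Ran}(H_u)=N$, so $\textup{Ker}(H_u)$ has codimension $N$ in $L^2_+$. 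On the other hand $b_u$ is a finite Blaschke product on $\C_+$ with exactly $N$ zeros (at $\bar p_j$ with multiplicities $m_j$); since $|b_u|\equiv 1$ on $\R$, multiplication by $b_u$ is an isometry of $L^2_+$, and the standard model-space fact $\dim(L^2_+\ominus b_uL^2_+)=N$ gives codimension $N$ on that side as well. Two closed subspaces of the same finite codimension with one contained in the other must coincide.

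It remains to verify $g=1-b_u$. Writing
\begin{equation*}
1-b_u=\frac{B(x)-\prod_j(x-\bar p_j)^{m_j}}{B(x)},
\end{equation*}
the leading $x^N$ terms cancel, so the numerator has degree at most $N-1$; the poles are confined to the $p_j\in\C_-$, so a partial fraction decomposition exhibits $1-b_u$ as a $\C$-linear combination of $1/(x-p_j)^{l_j}$ with $l_j=1,\dots,m_j$, hence $1-b_u\in\textup{Ran}(H_u)$ by Proposition \ref{PROP:kronecker}. Then $H_u(1-b_u)=\Pi(u)-\Pi(u\bar b_u)=u-0=u$, using $u\in L^2_+$ and the pole-reflection identity for the second term, and the uniqueness of $g\in\textup{Ran}(H_u)$ with $H_u g=u$ (which holds since $\textup{Ker}(H_u)\cap\textup{Ran}(H_u)=\{0\}$ by the perpendicularity established above) forces $g=1-b_u$. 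The only substantive step is thus the algebraic cancellation producing $b_u\bar u\in L^2_+$; once that pole-reflection principle is in hand, the rest reduces to routine manipulation of $\Pi$, Kronecker's theorem, and a dimension count.
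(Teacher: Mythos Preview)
Your proof is correct, but the route to $\textup{Ker}(H_u)=b_uL^2_+$ differs from the paper's. You prove the inclusion $b_uL^2_+\subseteq\textup{Ker}(H_u)$ directly from the pole-reflection identity $u\bar b_u\in L^2_-\cap L^\infty$, and then close by a codimension count (both sides have codimension $N$). The paper instead proves the \emph{opposite} inclusion first: it takes $f\in\textup{Ker}(H_u)=\textup{Ran}(H_u)^\perp$, uses Plancherel and the explicit Fourier transform of $1/(x-p_j)^{l_j}$ to convert orthogonality into the vanishing conditions $(D^{l_j-1}f)(\bar p_j)=0$, and then invokes the classical divisor property in $L^2_+$ to factor $f=b_u f'$. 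Your argument trades that Fourier computation for the model-space fact $\dim(L^2_+\ominus b_uL^2_+)=N$, which is of course essentially the same divisor property packaged differently. Both approaches end with the same computation $H_u(1-b_u)=\Pi(u)-\Pi(u\bar b_u)=u$, and the paper's equation \eqref{eqn:Pi(u bar{b_u})=0} is exactly your pole-reflection identity. So the two proofs share the key algebraic input but organize the kernel identification in opposite directions; yours is slightly more self-contained once one grants the Blaschke codimension fact, while the paper's makes the Fourier side of the orthogonality explicit.
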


\begin{proof}
Let $f\in \text{Ker}(H_u)=\big(\textup{Ran}(H_u)\big)^{\perp}$. Then by equation \eqref{eq:ranH_u} we have that
\begin{equation*}
\Big(f,\frac{1}{(x-p_j)^{l_j}}\Big)=0 \text{ for all } j=1,2,\dots,k \text{ and } l_j=1,2,\dots,m_j.
\end{equation*}

\noi
By the residue theorem we have that
\begin{align}\label{residue thm}
\ft{\frac{1}{(x-p_j)^{l_j}}}(\xi)=\int_{\R}\frac{e^{-ix\xi}}{(x-p_j)^{l_j}}=\frac{2\pi (-i)^{l_j}}{(l_j-1)!}\xi^{l_j-1}e^{-ip_j\xi}.
\end{align}
Using the Plancherel formula, we obtain that
\begin{equation*}
0=(\hat{f}(\xi),\xi^{l_j-1}e^{-ip_j\xi})=\int e^{i\bar{p}_j\xi}\hat{f}(\xi)\xi^{l_j-1}d\xi
\end{equation*}

\noi
and thus $(D^{l_j-1}f)(\bar{p}_j)=0$, for all $j=1,2,\dots,k$ and $l_j=1,2,\dots,m_j$.
Then, the classical property \cite[Corollary 3.7.4, p.38]{Nikolskii} stating that if $f\in L^2_+$ is such that $f(\bar{p})=0$, $\text{Im}(p)<0$,
then $f(x)=\frac{x-\bar{p}}{x-p}f'(x)$ with $f'\in L^2_+$, applied recurrently to $f$, $Df$,...,$D^{m_j-1}f$
yields the formula for $b_u$. Using this formula we obtain
\begin{equation}\label{eqn:Pi(u bar{b_u})=0}
\Pi(u\bar{b}_u)=\Pi\Big(\frac{A}{(x-\bar{p}_1)^{m_1}\dots(x-\bar{p}_k)^{m_k}}\Big)=0.
\end{equation}

\noi
Moreover, equation \eqref{eq:ranH_u} yields that $1-b_u\in\text{Ran}(H_u)$ and by \eqref{eqn: def g} we have that
\begin{equation*}
H_u(1-b_u)=\Pi(u-u\bar{b}_u)=u=H_u(g).
\end{equation*}

\noi
Since $H_u$ is one to one on its range, we conclude that $1-b_u=g$.
\end{proof}

\begin{lemma}\label{lemma:ft of u}
If $u\in\M(N)$ and if $g$ is such that $u=H_u(g)$, then
\begin{equation*}
\hat{u}(\lambda)=(u,e^{i\lambda x}g) \text{ for all } \lambda>0.
\end{equation*}

\end{lemma}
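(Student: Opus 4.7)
The plan is to unwind $H_u(g)$ in the definition of $u$ and move the exponential factor inside the inner product, so that the whole statement reduces to the elementary fact that the Szeg\"o projector acts as the identity on non-negative frequencies.

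First I would rewrite the right-hand side as a Fourier transform of a product. Since the inner product on $L^2(\R)$ is $(f,h)=\int f\bar h$, a direct computation gives
\begin{equation*}
(u,\,e^{i\lambda x}g)=\int_{\R}u(x)\,\overline{g(x)}\,e^{-i\lambda x}\,dx=\widehat{u\bar g}(\lambda),
\end{equation*}
once we justify that $u\bar g\in L^1(\R)$. This is where the hypothesis $u\in\mathcal M(N)$ is used: a rational function in $\mathcal M(N)$ satisfies $|u(x)|=O(|x|^{-1})$ as $|x|\to\infty$, and Lemma \ref{lemma:g=1-b} together with the formula for $b_u$ gives $g=1-b_u$, so $g$ is bounded and $g(x)=O(|x|^{-1})$ at infinity. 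Hence $u\bar g=O(|x|^{-2})$ is in $L^1(\R)$ and its Fourier transform is well defined.

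Next I would use the identity $u=H_u(g)=\Pi(u\bar g)$, which is exactly \eqref{eqn: def g} applied with our $g$. Taking the Fourier transform of both sides and recalling that the Szeg\"o projector $\Pi$ is defined by $\widehat{\Pi(f)}(\xi)=\chi_{[0,\infty)}(\xi)\hat f(\xi)$, we obtain, for every $\lambda>0$,
\begin{equation*}
\hat u(\lambda)=\widehat{\Pi(u\bar g)}(\lambda)=\widehat{u\bar g}(\lambda).
\end{equation*}
Combining this with the first step yields $\hat u(\lambda)=(u,e^{i\lambda x}g)$, which is the claim.

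There is no real obstacle: the argument is a short chain of elementary manipulations, and the only point that deserves care is the integrability of $u\bar g$ (needed so that $\widehat{u\bar g}$ is an honest pointwise Fourier integral rather than just an $L^2$ Fourier transform), together with the convention for $\Pi$ recorded in the introduction. Once $u\bar g\in L^1$ is secured from the decay of rational functions in $\mathcal M(N)$ and the shape of $g=1-b_u$, the identity holds at every $\lambda>0$ pointwise.
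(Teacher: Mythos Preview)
Your proof is correct and rests on the same underlying identity as the paper, namely $u=\Pi(u\bar g)$, but you organize the argument more directly. The paper splits $\hat u(\lambda)=\mathcal F\big(u(1-\bar g)\big)(\lambda)+(u,e^{i\lambda x}g)$ and then kills the first term by the conjugation trick $\mathcal F\big(u(1-\bar g)\big)(\lambda)=\overline{\mathcal F\big(\bar u(1-g)\big)}(-\lambda)$ together with \eqref{eqn:bar{u}(1-g)}, which says $\bar u(1-g)\in L^2_+$. You instead take the Fourier transform of $u=\Pi(u\bar g)$ and use that $\widehat{\Pi f}=\chi_{[0,\infty)}\hat f$, which gives $\hat u(\lambda)=\widehat{u\bar g}(\lambda)$ for $\lambda>0$ in one step. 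Both routes are short; yours avoids the auxiliary splitting and the conjugation identity, while the paper's version makes explicit the role of \eqref{eqn:bar{u}(1-g)}. Your care in checking $u\bar g\in L^1$ via the decay of $u$ and of $g=1-b_u$ is appropriate and ensures the pointwise identity for every $\lambda>0$.
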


\begin{proof}
Denoting by $\mathcal{F}$ the Fourier transform, we have that
\begin{align*}
\hat{u}(\lambda)&=\int e^{-i\lambda x}udx=\int e^{-i\lambda x}u(1-\bar{g})dx+\int e^{-i\lambda x}u\bar{g}dx\\
&=\mathcal{F}(u(1-\bar{g}))(\lambda)+(u,e^{i\lambda x}g)=\cj{\mathcal{F}(\bar{u}(1-g))}(-\lambda)+(u,e^{i\lambda x}g).
\end{align*}

\noi
By \eqref{eqn:bar{u}(1-g)} we have that $\bar{u}(1-g)\in L^2_+$.
Thus, the first term is the Fourier transform at $-\ld<0$ of
a function in $L^2_{+}$, and hence it is zero.
\end{proof}

\begin{lemma}
If $u(t)$ is the solution of the Szeg\"o equation corresponding to the initial condition
$u_0\in\M(N)$ at time $t$ and $g(t)\in\textup{Ran}(H_{u(t)})$ is such that
$u(t)=H_{u(t)}(g(t))$, then we have:
\begin{equation}\label{eqn:Uastu}
U^{\ast}(t)u(t)=e^{-i\frac{t}{2}H^2_{u_{0}}}u_0,
\end{equation}

\begin{equation*}
U^{\ast}(t)g(t)=e^{i\frac{t}{2}H^2_{u_{0}}}g_0.
\end{equation*}
\end{lemma}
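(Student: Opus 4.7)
The plan is to prove the two identities in sequence, the first by a direct Lax-pair computation and the second by combining the first identity with the conjugation formula $H_{u(t)}=U(t)H_{u_0}U^*(t)$ together with the injectivity of $H_{u_0}$ on its range.

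For \eqref{eqn:Uastu}, set $v(t):=U^{\ast}(t)u(t)$ and differentiate. Differentiating $U^{\ast}U=I$ and using \eqref{eqn:d_tB_u} yields $\frac{d}{dt}U^{\ast}=-U^{\ast}B_u$. Combining this with the Szeg\"o equation $\partial_t u=-i\Pi(|u|^2 u)=-iT_{|u|^2}u$ gives
\begin{equation*}
\tfrac{d}{dt}v=-U^{\ast}B_u u-iU^{\ast}T_{|u|^2}u=-U^{\ast}(B_u+iT_{|u|^2})u.
\end{equation*}
By the explicit form \eqref{eqn:Bu}, $B_u+iT_{|u|^2}=\tfrac{i}{2}H_u^2$, so $\tfrac{d}{dt}v=-\tfrac{i}{2}U^{\ast}H_u^2 u$. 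Now \eqref{eqn:H_u} squared gives $H_u^2=U H_{u_0}^2 U^{\ast}$, hence $U^{\ast}H_u^2=H_{u_0}^2 U^{\ast}$ and $\tfrac{d}{dt}v=-\tfrac{i}{2}H_{u_0}^2 v$. Since $v(0)=u_0$ and $H_{u_0}^2$ is bounded self-adjoint, uniqueness for the linear ODE yields $v(t)=e^{-\frac{it}{2}H_{u_0}^2}u_0$.

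For the second identity, apply $U^{\ast}(t)$ to $u(t)=H_{u(t)}(g(t))$ and use \eqref{eqn:H_u} with $U^{\ast}U=I$:
\begin{equation*}
U^{\ast}(t)u(t)=U^{\ast}(t)U(t)H_{u_0}U^{\ast}(t)g(t)=H_{u_0}\bigl(U^{\ast}(t)g(t)\bigr).
\end{equation*}
By the first identity, the left-hand side equals $e^{-\frac{it}{2}H_{u_0}^2}u_0=e^{-\frac{it}{2}H_{u_0}^2}H_{u_0}(g_0)$. Because $H_{u_0}$ is antilinear and commutes with $H_{u_0}^2$ (in the sense that $H_{u_0}\circ H_{u_0}^2=H_{u_0}^2\circ H_{u_0}=H_{u_0}^3$), expanding the exponential term by term and using $H_{u_0}(cw)=\bar c H_{u_0}(w)$ gives the intertwining relation $e^{-\frac{it}{2}H_{u_0}^2}H_{u_0}=H_{u_0}e^{\frac{it}{2}H_{u_0}^2}$. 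Therefore $H_{u_0}\bigl(U^{\ast}(t)g(t)\bigr)=H_{u_0}\bigl(e^{\frac{it}{2}H_{u_0}^2}g_0\bigr)$. By \eqref{eqn:URan} and unitarity $U^{\ast}(t)g(t)\in\mathrm{Ran}(H_{u_0})$, and $e^{\frac{it}{2}H_{u_0}^2}$ preserves $\mathrm{Ran}(H_{u_0})$, so both arguments of $H_{u_0}$ lie in $\mathrm{Ran}(H_{u_0})=\mathrm{Ker}(H_{u_0})^{\perp}$, on which $H_{u_0}$ is injective. Cancelling $H_{u_0}$ gives $U^{\ast}(t)g(t)=e^{\frac{it}{2}H_{u_0}^2}g_0$.

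The computation for \eqref{eqn:Uastu} is standard once one notices the cancellation between $iT_{|u|^2}u$ in the Szeg\"o equation and the Toeplitz piece of $B_u$; the actual obstacle is bookkeeping the antilinearity of $H_{u_0}$ in the second step, which is responsible for the sign flip between $e^{-\frac{it}{2}H_{u_0}^2}u_0$ and $e^{+\frac{it}{2}H_{u_0}^2}g_0$, and for justifying the cancellation of $H_{u_0}$ at the end via its injectivity on $\mathrm{Ran}(H_{u_0})$.
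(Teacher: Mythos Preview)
Your proof is correct and follows essentially the same route as the paper: the first identity via the ODE $\frac{d}{dt}(U^{\ast}u)=-\frac{i}{2}H_{u_0}^2(U^{\ast}u)$ obtained from the cancellation between the Toeplitz part of $B_u$ and the Szeg\"o nonlinearity, and the second identity by rewriting the first via $H_{u(t)}=U(t)H_{u_0}U^{\ast}(t)$, using the antilinearity of $H_{u_0}$ to flip the sign in the exponential, and cancelling $H_{u_0}$ by its injectivity on $\mathrm{Ran}(H_{u_0})$. The paper's proof is the same argument, stated more tersely.
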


\begin{proof}
Differentiating with respect to $t$ and using equations \eqref{eqn:Bu},
\eqref{eq:szego}, and \eqref{eqn:H_u}, we have
\begin{align*}
\frac{d}{dt}U^{\ast}u=&-U^{\ast}B_uu-iU^{\ast}T_{|u|^2}u=
-U^{\ast}(-iT_{|u|^2}u+\frac{i}{2}H^2_uu)-iU^{\ast}T_{|u|^2}u\\
=&-\frac{i}{2}U^{\ast}H^2_u u=-\frac{i}{2}H^2_{u_{0}}U^{\ast}u.
\end{align*}

\noi
Since $U^{\ast}(0)=U(0)=I$, this yields the first equality. By equation \eqref{eqn:H_u}
and using the fact that the operator $H_u$ is skew-symmetric,
we can rewrite \eqref{eqn:Uastu} as
\[H_{u_0}\Big(U^{\ast}(t)g(t)-e^{i\frac{t}{2}H^2_{u_{0}}}g_0\Big)=0.\]

\noi
By \eqref{eqn:URan} we have that $U^{\ast}(t)g(t)-e^{i\frac{t}{2}H^2_{u_{0}}}g_0\in \text{Ran}(H_{u_0})$ and
since $H_{u_0}$ is one to one on $\text{Ran}(H_{u_0})$, the second equality follows.
\end{proof}

In the following we denote the unitary operator
 $e^{i\frac{t}{2}H^2_{u_{0}}}$ by $W(t)$. The skew-symmetry of the
Hankel operator $H_{u_0}$ yields
\begin{equation}\label{eqn:H_u_0W}
H_{u_0}W=W^{\ast}H_{u_0}.
\end{equation}

\noi
We also set
\begin{equation}\label{eqn: def e}
\tilde{e}(t):=U^{\ast}(t)g(t)=e^{i\frac{t}{2}H^2_{u_{0}}}g_0=W(t)g_0.
\end{equation}

With these notations we have, by equation \eqref{eqn:H_u}, that
\begin{equation}\label{eqn:u1}
u(t)=H_{u(t)}(g(t))=U(t)H_{u_0}U^{\ast}(t)g(t)=U(t)(H_{u_0}\tilde{e}(t)).
\end{equation}

\begin{definition}
Let us denote by $P_u$ the orthogonal projection on $\textup{Ran}(H_{u})$.
We also denote by $T_{\ld}$, $\ld>0$, the compressed shift operators acting on $\textup{Ran}(H_u)$ by
\begin{equation*}
T_{\ld}f=P_u(e^{i\ld x}f), \text{ for all } f\in \textup{Ran}(H_u).
\end{equation*}

\noi
If $u(t)$ is the solution of the Szeg\"o equation with initial condition $u_0$
and $T_{\ld}(t)$ acts on $\textup{Ran}(H_{u(t)})$, then
we define the operators $S_{\lambda}(t)$, $\lambda>0$, $t\in\R$
on $\textup{Ran}(H_{u_0})$ by
\begin{equation*}
S_{\lambda}(t)f=U^{\ast}(t)T_{\ld}(t)U(t)=P_{u_0}U^{\ast}(t)e^{i\lambda x}U(t)f.
\text{ for all } f\in \textup{Ran}(H_{u_0}),
\end{equation*}
\end{definition}

Notice that using \eqref{eqn:URan}, we have
\begin{equation}\label{eqn:e^{ild x}g}
P_{u(t)}e^{i\lambda x}g(t)=U(t)(P_{u_0}U^{\ast}(t)e^{i\lambda x}U(t))U^{\ast}(t)g(t)
=U(t)(S_{\lambda}(t)\tilde{e}).
\end{equation}

\begin{definition}
Let $u=\frac{A}{B}\in\M(N)$, where $A$ and $B$ are relatively prime,
$B(0)=1$, $B(x)=(x-p_1)^{m_1}\dots(x-p_k)^{m_k}$, $m_1+\dots m_k=N$,
and $\textup{Im}(p_j)<0$ for all $j=1,2,\dots,k$. For all $f\in\textup{Ran}(H_u)$,
\begin{equation*}
f=\sum_{j=1}^k \frac{\alpha_j}{x-p_j}
+\sum_{j=1}^k \sum_{l_j=2}^{m_j}\frac{\beta_j^l}{(x-p_j)^{l_j}},
\end{equation*}
\noi
we define
\begin{equation*}
\Lambda(f):=\sum_{j=1}^k \alpha_j=\lim_{x\to\infty}xf(x).
\end{equation*}
The infinitesimal shift operator is the linear operator $T$ defined on $\textup{Ran}(H_u)$ by:
\begin{equation}\label{eqn:defT}
T(f)=xf-\Lambda(f)b_u.
\end{equation}

\noi
Notice that by \eqref{eq:ranH_u}, we have that
$T(f)\in\textup{Ran}(H_u)$ for all $f\in\textup{Ran}(H_u)$.

If $u(t)$ is the solution of the Szeg\"o equation with initial condition $u_0$ and $T(t)$
is the operator $T$ acting on $\textup{Ran}(H_{u(t)})$, we introduce the family of
operators $S(t)$ acting on $\textup{Ran}(H_{u_0})$, by
\begin{equation*}
S(t)=U^{\ast}(t)T(t)U(t).
\end{equation*}

\end{definition}

\begin{lemma}\label{lemma:eigen T}
The eigenvalues of $T$ and $S$ are
the complex conjugates of the poles of $u$. In particular, the eigenvalues
of $T$ and $S$ have strictly positive imaginary part.
\end{lemma}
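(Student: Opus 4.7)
The plan is to exploit the conjugation $S(t) = U^{\ast}(t) T(t) U(t)$ together with the range-preservation \eqref{eqn:URan}. Since both $\textup{Ran}(H_{u_0})$ and $\textup{Ran}(H_{u(t)})$ are $N$-dimensional by Proposition \ref{PROP:kronecker}, and $U(t)$ is unitary on $L^2_+$, the inclusion $U(t)(\textup{Ran}(H_{u_0})) \subset \textup{Ran}(H_{u(t)})$ together with injectivity forces $U(t)$ to restrict to a unitary isomorphism between these two $N$-dimensional spaces. Hence $S(t)$ and $T(t)$ are similar and share the same spectrum; it therefore suffices to identify the eigenvalues of $T$ acting on $\textup{Ran}(H_u)$ for a fixed $u = A/B \in \M(N)$ with $B(x) = \prod_{j=1}^k (x - p_j)^{m_j}$.

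Suppose $T f = \mu f$ for some nonzero $f \in \textup{Ran}(H_u)$. The definition \eqref{eqn:defT} rewrites as $(x - \mu) f = \Lambda(f)\, b_u$. If $\Lambda(f) = 0$ then $f$ vanishes pointwise, a contradiction, so $\Lambda(f) \neq 0$ and $f$ is proportional to $b_u/(x - \mu)$. Since $b_u$ has poles of order $m_j$ at $p_j$ and zeros of order $m_j$ at $\bar{p}_j$, the characterization \eqref{eq:ranH_u} of $\textup{Ran}(H_u)$ forces the new singularity at $\mu$ in $b_u/(x - \mu)$ to be cancelled by a zero of $b_u$, which happens precisely when $\mu \in \{\bar{p}_1, \ldots, \bar{p}_k\}$. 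This identifies the eigenvalue set as the complex conjugates of the poles of $u$.

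To pin down the algebraic multiplicities I will exhibit explicit Jordan chains. For each $j$ and each $\ell = 1, \ldots, m_j$ define $f_\ell := b_u/(x - \bar{p}_j)^\ell$; the $m_j$-th order zero of $b_u$ at $\bar{p}_j$ absorbs the $\ell$-th order pole, so $f_\ell \in \textup{Ran}(H_u)$, and the $f_\ell$ are linearly independent because they have distinct orders of vanishing at $\bar{p}_j$. The identity $x f_\ell = f_{\ell - 1} + \bar{p}_j f_\ell$ (with $f_0 := b_u$) combined with $\Lambda(f_1) = 1$ and $\Lambda(f_\ell) = 0$ for $\ell \geq 2$ (the latter because $f_\ell \sim x^{-\ell}$ at infinity) yields $(T - \bar{p}_j) f_1 = 0$ and $(T - \bar{p}_j) f_\ell = f_{\ell - 1}$ for $\ell \geq 2$. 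Hence the generalized eigenspace at $\bar{p}_j$ has dimension at least $m_j$, and since $\sum_j m_j = N = \dim \textup{Ran}(H_u)$, these generalized eigenspaces exhaust the space with equality. The final ``in particular'' claim is then immediate from $\textup{Im}(p_j) < 0$ built into the definition of $\M(N)$.

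The main obstacle is the multiplicity count: the analysis of $Tf = \mu f$ only produces a one-dimensional kernel at each $\bar{p}_j$, pinning down the eigenvalue set but not the algebraic multiplicities. Constructing the explicit Jordan chain $\{f_\ell\}_{\ell = 1}^{m_j}$ resolves this cleanly, and the only delicate point is the split $\Lambda(f_1) = 1$ versus $\Lambda(f_\ell) = 0$ for $\ell \geq 2$, which is what simultaneously produces the eigenvector at the bottom of the chain and the shift action higher up.
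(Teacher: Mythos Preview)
Your proof is correct and the core of it---rewriting the eigenvalue equation as $(x-\mu)f = \Lambda(f)\,b_u$ and reading off that $\mu$ must be a zero of $b_u$---is exactly the paper's argument. The paper's proof stops there: it simply observes that $T$ and $S$ are conjugate, writes $(x-\lambda)f = \Lambda(f)b_u$, evaluates at $x=\lambda$ to get $b_u(\lambda)=0$, and invokes Lemma~\ref{lemma:g=1-b}.

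Where you go beyond the paper is in the multiplicity count. The paper's proof only establishes the inclusion $\{\text{eigenvalues}\} \subset \{\bar p_1,\dots,\bar p_k\}$ and does not address the converse or the algebraic multiplicities (and it also glosses over the $\Lambda(f)\neq 0$ check, which you handle). Your explicit Jordan chains $f_\ell = b_u/(x-\bar p_j)^\ell$ give a clean and complete picture: they show each $\bar p_j$ has algebraic multiplicity exactly $m_j$, so that the spectrum of $T$ matches the pole divisor of $u$ with multiplicity. This is a genuine strengthening of the paper's argument, and the computation is correct---the key point, as you note, is that $\Lambda(f_1)=1$ while $\Lambda(f_\ell)=0$ for $\ell\geq 2$, which is what makes the chain terminate correctly at the bottom.
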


\begin{proof}
Since $T$ and $S$ are conjugated, they have the same eigenvalues.
If $Tf=\ld f$, then we have that $(x-\ld)f=\Ld(f)b_u$. Taking $x=\ld$, we obtain that $b_u(\ld)=0$.
Then, Lemma \ref{lemma:g=1-b} yields that $\ld=\bar{p}_j$.
\end{proof}

\begin{remark}
\rm
Notice that we can extend the definition of $\Ld$ to
\begin{equation*}
T_{|u|^2}\big(\text{Ran}(H_u)\big)
=\Big\{\sum_{j=1}^k \sum_{l_j=1}^{2m_j}\frac{\beta_j^l}{(x-p_j)^{l_j}};\,\, \beta_j^l\in\C  \Big\}.
\end{equation*}

\noi
We then use formula \eqref{eqn:defT} to extend the definition of $T$ to $T_{|u|^2}\big(\text{Ran}(H_u)\big)$.
\end{remark}

\begin{lemma}\label{lemma:S in terms of Sld}
The operator $iS$ is the infinitesimal generator of the semi-group  $S_{\ld}$, i.e. $S_{\ld}=e^{i\ld S}$ for all $\ld>0$.
\end{lemma}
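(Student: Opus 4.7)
The plan is to reduce the statement to proving $T_\lambda = e^{i\lambda T}$ on $\textup{Ran}(H_u)$ for a fixed $u\in\M(N)$, and then to identify $iT$ as the infinitesimal generator of the semigroup $\{T_\lambda\}_{\lambda\geq 0}$. By \eqref{eqn:URan} and the unitarity of $U(t)$, this operator restricts to a unitary isomorphism $\textup{Ran}(H_{u_0})\to\textup{Ran}(H_{u(t)})$. Since $S_\lambda=U^*T_\lambda U$ and $S=U^*TU$, the functional calculus commutes with unitary conjugation to give $e^{i\lambda S}=U^*e^{i\lambda T}U$, so the identity $S_\lambda=e^{i\lambda S}$ reduces to the snapshot statement $T_\lambda=e^{i\lambda T}$ for a fixed rational $u$.

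Next I would verify that $\{T_\lambda\}_{\lambda\geq 0}$ is a strongly continuous one-parameter semigroup on $\textup{Ran}(H_u)$. The key observation is that multiplication by $e^{i\lambda x}$ for $\lambda\geq 0$ preserves $L^2_+$ (it shifts Fourier supports rightward by $\lambda$), and therefore also preserves $\textup{Ker}(H_u)=b_u L^2_+$ from Lemma \ref{lemma:g=1-b}. Decomposing $e^{i\lambda_2 x}f=T_{\lambda_2}f+k$ with $k\in b_u L^2_+$, the piece $e^{i\lambda_1 x}k$ remains in $b_u L^2_+$, whence $P_u(e^{i(\lambda_1+\lambda_2)x}f)=P_u(e^{i\lambda_1 x}T_{\lambda_2}f)=T_{\lambda_1}T_{\lambda_2}f$. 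Strong continuity $T_\lambda f\to f$ at $\lambda=0^+$ is automatic on the finite-dimensional space $\textup{Ran}(H_u)$, so $T_\lambda$ admits a bounded generator $G$ and $T_\lambda=e^{\lambda G}$; the remaining task is to show $G=iT$.

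The heart of the argument is to compute $G$ by testing against $\phi\in\textup{Ran}(H_u)$. Writing $\widehat{f\bar\phi}(-\lambda)=\int e^{i\lambda x}f\bar\phi\,dx$,
\[(Gf,\phi)=\lim_{\lambda\to 0^+}\frac{\widehat{f\bar\phi}(-\lambda)-\widehat{f\bar\phi}(0)}{\lambda}=-\bigl(\widehat{f\bar\phi}\bigr)'(0^-).\]
For $\xi<0$, $e^{-ix\xi}$ is bounded in the upper half-plane, and closing the contour there picks up residues only at the upper-half poles $\bar p_k$ of $\bar\phi$, giving $\widehat{f\bar\phi}(\xi)=2\pi i\sum_k \mathrm{Res}_{\bar p_k}[e^{-ix\xi}f\bar\phi]$; differentiating in $\xi$ at $0^-$ yields $(Gf,\phi)=-2\pi\sum_k\mathrm{Res}_{\bar p_k}[xf\bar\phi]$. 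In parallel, $Tf=xf-\Lambda(f)b_u\in L^2$ with $Tf\bar\phi=O(1/x^2)$ at infinity, so the same contour argument produces $(Tf,\phi)=2\pi i\sum_k\mathrm{Res}_{\bar p_k}[Tf\bar\phi]$. The crucial cancellation comes from Proposition \ref{PROP:kronecker}: $b_u$ has a zero of order exactly $m_k$ at $\bar p_k$ that matches the maximal pole order of $\bar\phi$ there, so $b_u\bar\phi$ is analytic at each $\bar p_k$ and contributes no residue. Hence $\mathrm{Res}_{\bar p_k}[Tf\bar\phi]=\mathrm{Res}_{\bar p_k}[xf\bar\phi]$, and one concludes $i(Tf,\phi)=-2\pi\sum_k\mathrm{Res}_{\bar p_k}[xf\bar\phi]=(Gf,\phi)$ for every $\phi$, i.e.\ $G=iT$.

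The main obstacle I anticipate is precisely this residue-cancellation step: besides justifying the left-derivative computation at $\xi=0^-$, one must track Laurent expansions carefully enough at each $\bar p_k$ so that the zero of $b_u$ fully absorbs the (possibly higher-order) pole of $\bar\phi$. Once $G=iT$ is established, $T_\lambda=e^{i\lambda T}$ follows from the uniqueness of the $C^0$-semigroup with a prescribed bounded generator on a finite-dimensional space, and the initial reduction then yields $S_\lambda=e^{i\lambda S}$.
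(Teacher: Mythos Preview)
Your argument is correct, and your approach differs meaningfully from the paper's. Both proofs first reduce to the snapshot statement $T_\lambda=e^{i\lambda T}$ on $\textup{Ran}(H_u)$ via unitary conjugation, but the identification of the generator is handled differently. The paper introduces the point-evaluation operator $L:f\mapsto\{\partial_x^m f(\bar p_j)\}$, observes that $L$ annihilates $b_uL^2_+$ (hence $L\circ P_u=L$) and is an isomorphism on $\textup{Ran}(H_u)$, and then computes $\frac{d}{d\lambda}\big|_{\lambda=0}L(e^{i\lambda x}f)=iL(xf)=iL(Tf)$ in one line. You instead pair with test vectors $\phi\in\textup{Ran}(H_u)$, write $(T_\lambda f,\phi)=\widehat{f\bar\phi}(-\lambda)$, and use contour integration in the upper half-plane; the cancellation $\textup{Res}_{\bar p_k}[b_u\bar\phi]=0$ plays the same structural role as $L(b_u)=0$ does in the paper. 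In effect, the paper's $L$ and your pairing-plus-residues are dual devices for ``seeing through'' the projection $P_u$. What your route buys is an explicit verification of the semigroup law $T_{\lambda_1+\lambda_2}=T_{\lambda_1}T_{\lambda_2}$ via the invariance of $\textup{Ker}(H_u)=b_uL^2_+$ under multiplication by $e^{i\lambda x}$, a point the paper leaves implicit; the paper's route is shorter and avoids the residue bookkeeping you rightly flag as the place requiring care.
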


\begin{proof}
Because of the definitions of $S$ and $S_{\ld}$ in terms of
$T$ and $T_{\ld}$, it is enough to prove that
\begin{align*}
-i\frac{d}{d\ld}_{|\ld =0}T_{\ld}f=TT_{\ld|\ld =0}f,
\end{align*}

\noi
where $T$ and $T_{\ld}$ act on $\text{Ran}(H_u)$.

Define the linear operator $L:\textup{Hol}(\C_+)\to \C^N$ by
\begin{align*}
L:f \mapsto \big\{\partial_x^m f(\bar{p}_j)\big| j\in\{1,2,\dots,k\}, m\in\{0,2,\dots,m_j-1\}\big\},
\end{align*}

\noi
where $p_1,p_2,\dots,p_k$ are the poles of $u$ and $m_j$ is the multiplicity of
the pole $p_j$. Then, we have that
$\textup{Ker} L=b_u\textup{Hol}(\C_+)$, where $b_u=\prod_{j=1}^k\Big(\frac{x-\bar{p}_j}{x-p_j}\Big)^{m_j}$.
In particular, $L_{|\textup{Ran}(H_u)}:\textup{Ran}(H_u)\to L(\textup{Ran}(H_u))$ is a isomorphism.
Since $T_{\ld}f,Tf\in\textup{Ran}(H_u)$ for all $f\in\textup{Ran}(H_u)$,
the lemma is proved once we show that $L(-i\frac{d}{d\ld}_{|\ld =0}T_{\ld}f)=L(Tf)$.
This is indeed true since $L(b_u)=0$, $L(h)=L(P_uh)$ for all $h\in L^2_+$, and
\begin{align*}
\frac{d}{d\ld}_{|\ld=0}L(T_{\ld}f)&=\frac{d}{d\ld}_{|\ld=0}L(P_u(e^{i\ld x}f))
=\frac{d}{d\ld}_{|\ld=0}L(e^{i\ld x}f)\\
&=iL(xf)=iL(xf-\Ld(f)b_u)=iL(Tf).
\end{align*}
\end{proof}

\begin{proposition}\label{lemma:formula for the solution}
If $u(t)$ is the solution of the Szeg\"o equation
corresponding to the initial data $u_0\in\M(N)$,
 then the following formula holds:
\begin{equation}\label{eqn:formula for u}
u(t,x)=\frac{i}{2\pi}\Big(u_0,W(t)(S-xI)^{-1}W(t)g_0\Big).
\end{equation}
\end{proposition}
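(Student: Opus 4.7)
The plan is to compute $\widehat{u(t)}(\lambda)$ for $\lambda > 0$ in a form involving only the time-zero data $u_0$ and $g_0$, and then recover $u(t,x)$ by the Fourier inversion formula on $L^2_+$.

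I start from Lemma~\ref{lemma:ft of u}, which gives $\widehat{u(t)}(\lambda) = (u(t), e^{i\lambda x} g(t))$ for $\lambda > 0$. Since $u(t) \in \textup{Ran}(H_{u(t)})$, I can insert the orthogonal projector $P_{u(t)}$ in the second slot for free, converting $e^{i\lambda x} g(t)$ into $P_{u(t)}(e^{i\lambda x} g(t)) = T_\lambda(t) g(t)$. Applying \eqref{eqn:e^{ild x}g} gives $T_\lambda(t) g(t) = U(t) S_\lambda(t) \tilde{e}(t)$ with $\tilde{e}(t) = W(t) g_0$. Using the unitarity of $U(t)$ to transport $U(t)$ to the left slot, and invoking \eqref{eqn:Uastu} to write $U^{\ast}(t) u(t) = W^{\ast}(t) u_0$, I arrive at
\begin{equation*}
\widehat{u(t)}(\lambda) = \big(W^{\ast}(t) u_0,\; S_\lambda(t) W(t) g_0\big), \qquad \lambda > 0.
\end{equation*}

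Next, Lemma~\ref{lemma:S in terms of Sld} identifies the semigroup $S_\lambda(t) = e^{i\lambda S(t)}$, so the Fourier inversion formula on $L^2_+$ yields
\begin{equation*}
u(t,x) = \frac{1}{2\pi}\int_0^\infty e^{ix\lambda} \big(W^{\ast}(t) u_0,\; e^{i\lambda S(t)} W(t) g_0\big)\, d\lambda.
\end{equation*}
I absorb $e^{ix\lambda}$ into the right slot, where it appears as its complex conjugate $e^{-ix\lambda}$ by antilinearity, and combine it with $e^{i\lambda S(t)}$ to obtain $e^{i\lambda(S(t) - xI)}$. Interchanging the integral with the inner product is legal because $\textup{Ran}(H_{u_0})$ is finite-dimensional and, by Lemma~\ref{lemma:eigen T}, every eigenvalue of $S(t)$ has strictly positive imaginary part, so the integrand decays exponentially in $\lambda$. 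The operator-valued integral $\int_0^\infty e^{i\lambda(S(t) - xI)} d\lambda$ evaluates to a scalar multiple of $(S(t) - xI)^{-1}$, and pulling the remaining $W^{\ast}(t)$ from the first slot back to $W(t)$ yields the claimed formula.

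The principal conceptual step is the identification $S_\lambda(t) = e^{i\lambda S(t)}$ in Lemma~\ref{lemma:S in terms of Sld}: this is what turns the family of compressed shifts $\{T_\lambda(t)\}_{\lambda > 0}$ into the resolvent of the single operator $T(t)$ after Fourier inversion, and is the reason the formula involves $(S-xI)^{-1}$ rather than something more elaborate. The rest is short bookkeeping: the unitarity of $U(t)$ and $W(t)$, the identity $U^{\ast}(t) u(t) = W^{\ast}(t) u_0$, and the spectral localization of $S(t)$ in the open upper half-plane. Apart from a possible sign bookkeeping in the prefactor (which must be tracked carefully through the conjugation rule for $(\cdot,\cdot)$ and the standard resolvent formula $\int_0^\infty e^{i\lambda A} d\lambda$), no further analytic difficulty arises.
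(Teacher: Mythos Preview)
Your proposal is correct and follows essentially the same route as the paper's proof: start from $\widehat{u(t)}(\lambda)=(u(t),e^{i\lambda x}g(t))$, use the conjugation by $U(t)$ together with \eqref{eqn:Uastu} and \eqref{eqn:e^{ild x}g} to reduce to $(W^{\ast}u_0,S_\lambda Wg_0)$, invoke $S_\lambda=e^{i\lambda S}$, and integrate in $\lambda$ to produce the resolvent $(S-xI)^{-1}$. The only cosmetic difference is that the paper carries out the Fourier inversion first for $z$ in the upper half-plane (via the Cauchy integral) and then specializes to real $x$ using Lemma~\ref{lemma:eigen T}, whereas you work directly at real $x$ and cite Lemma~\ref{lemma:eigen T} for the exponential decay; both are equivalent here since $u(t)\in\M(N)$ is smooth. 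Your caveat about tracking the sign in the prefactor is well placed.
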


\begin{proof}
Using the Cauchy integral formula, Plancherel's identity, equation \eqref{residue thm}, Lemma \ref{lemma:ft of u},
equations \eqref{eqn:u1} and \eqref{eqn:e^{ild x}g}, the fact that
$U(t)$ are unitary operators,
equation \eqref{eqn:H_u_0W}, and Lemma \ref{lemma:S in terms of Sld}, we have for $\textup{Im}z>0$ that
\begin{align*}
u(z,t)&=\frac{1}{2\pi i}\int_0^{\infty}\frac{u(x)}{x-z}dx
=\frac{1}{4\pi ^2 i}\int_0^{\infty}\ft{u}(t,\lambda)\cj{\ft{\frac{1}{x-\bar{z}}}}d\lambda
=\frac{1}{2\pi}\int_0^{\infty}e^{iz\ld}\ft{u}(t,\lambda)d\lambda\\
&=\frac{1}{2\pi}\int_0^{\infty}e^{iz\ld}(u(t),e^{i\ld x}g(t))d\ld
=\frac{1}{2\pi}\int_0^{\infty}e^{iz\ld}(u(t),P_{u(t)}e^{i\ld x}g(t))d\ld\\
&=\frac{1}{2\pi}\int_0^{\infty}e^{iz\ld}\Big(U(t)(H_{u_0}\tilde{e}),U(t)(S_{\ld}(t)\tilde{e})\Big)d\ld
=\frac{1}{2\pi}\int_0^{\infty}e^{iz\ld}\Big(H_{u_0}\tilde{e},S_{\ld}(t)\tilde{e}\Big)d\ld\\
&=\frac{1}{2\pi}\int_0^{\infty}e^{iz\ld}\Big(H_{u_0}(W(t)g_0),S_{\ld}(t)W(t)g_0\Big)d\ld
=\frac{1}{2\pi}\int_0^{\infty}e^{iz\ld}\Big(W(t)^{\ast}H_{u_0}g_0,S_{\ld}(t)W(t)g_0\Big)d\ld\\
&=\frac{1}{2\pi}\Big(W(t)^{\ast}u_0,\int_0^{\infty}e^{\lambda(iS-i \bar{z}I)}d\ld W(t)g_0\Big)
=\frac{1}{2\pi}\Big(u_0,W(t)(iS-i \bar{z}I)^{-1}W(t)g_0\Big).
\end{align*}

The above formula also holds for $x\in\R$ since, by Lemma \ref{lemma:eigen T}, the eigenvalues of $S$
are not real numbers.
\end{proof}

Notice that in this formula for $u(t)$, the operator $S(t)$
is defined using $U(t)$
whose definition depends on $u(t)$. Our goal is to
characterize $S(t)$ without using $U(t)$.
In order to do that, we need to determine the derivative in time of $S(t)h$,
for any $h\in\textup{Ran}(H_{u_0})$.
This derivative is expressed in terms of commutators of $T$ with
Hankel and Toeplitz operators, that we compute in the following.

The below formula, that can be proved by passing into the Fourier space, will be useful:

\begin{equation}\label{eqn:basic xf}
\Pi(xf)=x\Pi(f)+\frac{1}{2\pi i}\int f,
\end{equation}

\noi
if $f\in L^1(\R)\cap L^2(\R)$ and $xf\in L^2(\R)$.

\begin{lemma}\label{lemma: Ld(H_uf)}
If $u\in\M(N)$ and $f\in\textup{Ran}(H_u)$, then
\begin{align}
\Lambda(H_uf)&=-\frac{1}{2\pi i}\int u\bar{f},\label{eqn:LdH} \\
\Lambda (f)&=-\frac{1}{2\pi i}(f,g) \text{ for all } f\in\textup{Ran}(H_u)\label{Ld(f)},\\
\Lambda(T_{|u|^2}f)&=-\frac{1}{2\pi i}\int |u|^2f\label{eqn:LdT}.
\end{align}

\end{lemma}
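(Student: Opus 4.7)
The plan is to derive all three identities from the single commutation formula \eqref{eqn:basic xf}, applied to suitable $h$ and then sent $x\to\infty$. The key observation I would exploit is that any rational function in $L^2_+(\R)$ has denominator degree strictly greater than numerator degree, and hence decays to zero at infinity.

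For \eqref{eqn:LdH}, I take $h=u\bar{f}$. Since both $u$ and $f$ are bounded rational functions with poles strictly in $\C_-$ decaying like $1/x$ at infinity, $u\bar{f}\in L^1\cap L^2$ and $xu\bar{f}\in L^2$, so \eqref{eqn:basic xf} gives
\begin{equation*}
\Pi(xu\bar{f}) = xH_u f + \frac{1}{2\pi i}\int u\bar{f}.
\end{equation*}
Because $xu\bar{f}$ remains a proper rational function in $L^2(\R)$, its Szeg\"o projection is a proper rational function in $L^2_+(\R)$, which vanishes at infinity. Since $xH_uf(x)\to\Lambda(H_uf)$ by definition, passing to the limit $x\to\infty$ yields \eqref{eqn:LdH}.

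To deduce \eqref{Ld(f)}, I use that $H_u$ is injective on $\textup{Ran}(H_u)$ to write $f=H_u h$ for a unique $h\in\textup{Ran}(H_u)$. Applying \eqref{eqn:LdH} and then combining $u=H_u g$ with the symmetry \eqref{sym H_u},
\begin{equation*}
\Lambda(f)=\Lambda(H_u h)=-\tfrac{1}{2\pi i}(u,h)=-\tfrac{1}{2\pi i}(H_u g,h)=-\tfrac{1}{2\pi i}(H_u h,g)=-\tfrac{1}{2\pi i}(f,g).
\end{equation*}
For \eqref{eqn:LdT}, the same template applies with $h=|u|^2 f$: since $|u|^2\in L^1\cap L^\infty$ and $f\in L^2\cap L^\infty$, we have $|u|^2 f\in L^1\cap L^2$, while $x|u|^2 f$ decays like $1/x^2$ at infinity and so lies in $L^2$. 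Formula \eqref{eqn:basic xf} together with the same vanishing-at-infinity argument produces \eqref{eqn:LdT}.

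The only genuinely technical point is justifying $\Pi(xh)(x)\to 0$ at infinity for each $h$ considered. This reduces to a degree count: $xh$ remains a proper rational function, hence its image under $\Pi$ is a proper rational function in $L^2_+(\R)$ and must vanish at infinity. Everything else is just bookkeeping around \eqref{eqn:basic xf} and \eqref{sym H_u}.
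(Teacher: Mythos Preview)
Your proof is correct and takes a genuinely different route from the paper. The paper proves the unified claim $\Lambda(\Pi(f_1\bar{f}_2))=-\frac{1}{2\pi i}\int f_1\bar{f}_2$ for rational $f_1,f_2$ with the same poles by writing out the partial fraction decomposition of $f_1\bar{f}_2$ explicitly and invoking the residue theorem to evaluate the integral as $-2\pi i$ times the sum of residues at the poles in $\C_-$; since $\Pi$ retains precisely those simple fractions, that sum is $\Lambda(\Pi(f_1\bar{f}_2))$. Identities \eqref{eqn:LdH} and \eqref{eqn:LdT} are then specializations, and \eqref{Ld(f)} is derived from \eqref{eqn:LdH} exactly as you do.

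Your approach instead recycles the formula \eqref{eqn:basic xf} already established in the paper and extracts the identities by sending $x\to\infty$. This is cleaner bookkeeping: the constant $-\frac{1}{2\pi i}$ is inherited directly from \eqref{eqn:basic xf} rather than recomputed via residues, and the three identities fall out of a single template. The paper's route, on the other hand, makes the mechanism completely explicit (one literally sees which partial-fraction coefficients survive the projection), which may be preferable if one later needs finer information about $\Pi(f_1\bar{f}_2)$. Both arguments ultimately hinge on the same fact---that $\Pi$ of a proper rational function selects the partial fractions with poles in $\C_-$---so the difference is one of packaging rather than depth.
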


\begin{proof}
The result follows once we prove that for all $f_1,f_2\in\M(N)$
that have the same poles, $p_1,\dots,p_k$, we have

\begin{equation}\label{eqn:Ld(Pi(f1f2))}
\Ld(\Pi(f_1\bar{f}_2))=-\frac{1}{2\pi i}\int f_1\bar{f}_2.
\end{equation}

\noi
Indeed, \eqref{eqn:LdH} follows taking $f_1=u$, $f_2=f$ and \eqref{eqn:LdT}
follows taking $f_1=uf$, $f_2=u$. Then, \eqref{Ld(f)} is a direct consequence of \eqref{eqn:LdH}.
In order to prove \eqref{eqn:Ld(Pi(f1f2))} we decompose $f_1\bar{f}_2$
into simple rational fractions:
\begin{equation*}
f_1\bar{f}_2=\frac{A_1}{x-p_1}+\dots+\frac{A_k}{x-p_k}+\frac{B_1}{x-\bar{p}_1}
+\dots+\frac{B_k}{x-\bar{p}_k}+\sum_{j=1}^k\sum_{l_j=2}^{m_j}\frac{A_j^{l_j}}{(x-p_j)^{l_j}}
+\sum_{j=1}^k\sum_{l_j=2}^{m_j}\frac{B_j^{l_j}}{(x-\bar{p}_j)^{l_j}}.
\end{equation*}

\noi
Since $\text{Im}(p_j)<0$ for all $j=1,2,\dots,k$, the residue theorem yield:
\begin{equation*}
\int f_1\bar{f}_2=-2\pi i(A_1+\dots+A_k)=-2\pi i\Ld(\Pi(f_1\bar{f}_2)).
\end{equation*}
\end{proof}

\begin{lemma}\label{lemma:comutators}
For all $h\in\textup{Ran}(H_u)$ we have
\begin{align}
[T,T_{|u|^2}]h=&-\frac{1}{2\pi i}\Big(\int |u|^2h\Big) g
+\Ld (h)|u|^2b_u,\label{eqn:comutator T}\\
[T,H_u^2]h=&- \frac{1}{2\pi i}\Big(\int |u|^2h\Big)g
+\frac{1}{2\pi i}\Big(\int \bar{u}h\Big) u.\label{eqn:comutator H^2}
\end{align}

\end{lemma}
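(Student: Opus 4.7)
The plan is to compute each commutator directly from the definition $Tf = xf - \Lambda(f) b_u$, reducing everything to the basic commutator $[x, \Pi]$ recorded in \eqref{eqn:basic xf}, together with the scalar identities \eqref{eqn:LdH}–\eqref{eqn:LdT} for $\Lambda$ and the kernel identity \eqref{eqn:Pi(u bar{b_u})=0}.

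For the first commutator, I would write
\[
[T,T_{|u|^2}]h=\bigl(xT_{|u|^2}h-T_{|u|^2}(xh)\bigr)-\Lambda(T_{|u|^2}h)\,b_u+\Lambda(h)\,T_{|u|^2}(b_u).
\]
The parenthesized piece equals $x\Pi(|u|^2h)-\Pi(x|u|^2h)$, which by \eqref{eqn:basic xf} equals $-\frac{1}{2\pi i}\int|u|^2h$. Next, \eqref{eqn:LdT} gives $\Lambda(T_{|u|^2}h)=-\frac{1}{2\pi i}\int|u|^2h$. For the last term I need to show $T_{|u|^2}(b_u)=|u|^2 b_u$: this is because $|u|^2 b_u=A\tilde A/B^2$ is a rational function whose poles lie strictly in the lower half‑plane (the factors $(x-\bar p_j)^{m_j}$ coming from $\bar u$ are cancelled by $b_u$), hence $|u|^2b_u\in L^2_+$. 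Collecting everything, the two constant pieces combine with $1-b_u=g$ via \eqref{eqn:g=1-b} into $-\frac{1}{2\pi i}(\int|u|^2h)g+\Lambda(h)|u|^2b_u$, which is \eqref{eqn:comutator T}.

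For the second commutator, the cleanest route is to first establish the \emph{first-order} commutator
\[
[T,H_u]k=-\tfrac{1}{2\pi i}(u,k)\,g\qquad (k\in\textup{Ran}(H_u)),
\]
and then use the derivation identity $[T,H_u^2]=[T,H_u]H_u+H_u[T,H_u]$. To get the first-order commutator I compute $H_u(Tk)=\Pi(u\,\overline{Tk})=\Pi(xu\bar k)-\overline{\Lambda(k)}\,\Pi(u\bar b_u)$; the second term vanishes by \eqref{eqn:Pi(u bar{b_u})=0}. On the other hand $TH_uk=xH_uk-\Lambda(H_uk)b_u$. The $x$‑terms are matched by \eqref{eqn:basic xf} applied to $u\bar k\in L^1\cap L^2$, and the constants are identified through \eqref{eqn:LdH}, collapsing the difference to $\Lambda(H_uk)(1-b_u)=-\frac{1}{2\pi i}(u,k)g$.

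The final assembly is then algebraic but requires vigilance about the $\C$-antilinearity of $H_u$. Applying the derivation identity,
\[
[T,H_u]H_uh=-\tfrac{1}{2\pi i}(u,H_uh)\,g,\qquad
H_u\bigl[T,H_u\bigr]h=H_u\Bigl(-\tfrac{1}{2\pi i}(u,h)\,g\Bigr)=\overline{\Bigl(-\tfrac{1}{2\pi i}(u,h)\Bigr)}\,H_u(g).
\]
Using $H_u(g)=u$ from \eqref{eqn: def g} and the identity $(u,H_uh)=(u,\Pi(u\bar h))=(u,u\bar h)=\int|u|^2h$ (valid since $u\in L^2_+$), together with $\overline{-\frac{1}{2\pi i}(u,h)}=\frac{1}{2\pi i}\int\bar u\,h$, yields exactly \eqref{eqn:comutator H^2}. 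The main technical point throughout is the careful bookkeeping of the antilinearity of $H_u$ and the implicit verification that the auxiliary expressions ($xH_uh$, $|u|^2b_u$, etc.) are well-defined rational functions in $L^2_+$ so that the formal manipulations have honest meaning.
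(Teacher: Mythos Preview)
Your proof is correct. For $[T,T_{|u|^2}]$ your computation is identical to the paper's: expand using $Tf=xf-\Lambda(f)b_u$, apply \eqref{eqn:basic xf} to $x\Pi(|u|^2h)-\Pi(x|u|^2h)$, use \eqref{eqn:LdT}, observe $\Pi(|u|^2b_u)=|u|^2b_u$, and recombine via $1-b_u=g$.

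For $[T,H_u^2]$ you take a slightly different organization than the paper. The paper expands $TH_u^2h-H_u^2Th$ directly, applying \eqref{eqn:basic xf} twice (once to $xH_u^2h$ and once inside $H_u(\Pi(xu\bar h))$) and using \eqref{eqn:Pi(u bar{b_u})=0} and \eqref{eqn:LdH} along the way. You instead isolate the first-order relation $[T,H_u]k=-\frac{1}{2\pi i}(u,k)g$ and then use the derivation identity $[T,H_u^2]=[T,H_u]H_u+H_u[T,H_u]$, handling the antilinearity of $H_u$ explicitly in the second term. The ingredients are exactly the same; your route is a bit more modular and makes the structure (one application of \eqref{eqn:basic xf} per factor of $H_u$) more transparent, while the paper's direct expansion avoids stating the auxiliary first-order identity. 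Either way the computation is short and the result is the same.
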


\begin{proof}
Using equations \eqref{eqn:basic xf}, \eqref{eqn:LdT}, and \eqref{eqn:Pi(u bar{b_u})=0}, we have
\begin{align*}
[T,T_{|u|^2}]h=&xT_{|u|^2}h-\Ld(T_{|u|^2}h)b_u-T_{|u|^2}(xh-\Ld(h)b_u)\\
=&\Big(x\Pi(|u|^2h)-\Pi(x|u|^2h)\Big)-\Ld(T_{|u|^2}h)b_u+\Ld(h)\Pi(|u|^2b_u)\\
=&- \frac{1}{2\pi i}\Big(\int |u|^2h\Big)
+\frac{1}{2\pi i}\Big(\int |u|^2f\Big)b_u+\Ld(h) |u|^2b_u.
\end{align*}

\noi
The first formula now follows using equation \eqref{eqn:g=1-b}.
Secondly, using equations \eqref{eqn:Pi(u bar{b_u})=0}, \eqref{eqn:basic xf} twice,
\eqref{eqn:LdH}, and \eqref{eqn:g=1-b}, we have
\begin{align*}
[T,H_u^2]h=&xH_u^2h-\Ld(H_u^2h)b_u-H_u\Big(\Pi(xu\bar{h})-\Ld(h)\Pi(u\bar{b}_u)\Big)\\
=&xH_u^2h-H_u\big(\Pi(xu\bar{h}))-\Ld(H_u^2h)b_u\\
=&\Pi(xu\cj{H_uh})-\frac{1}{2\pi i}\int u\cj{H_uh}-H_u\Big(x\Pi(u\bar{h})
+\frac{1}{2\pi i}\int u\bar{h}\Big)-\Ld(H_u^2h)b_u\\
=&\Pi(xu\cj{H_uh})-\frac{1}{2\pi i}\int u\cj{H_uh}-\Pi(ux\cj{H_uh})
+\frac{1}{2\pi i}\Big(\int \bar{u}h\Big) u-\Ld(H_u^2h)b_u\\
=&-\frac{1}{2\pi i}\Big(\int u\cj{H_uh}\Big)(1-b_u)
+\frac{1}{2\pi i}\Big(\int \bar{u}h\Big) u\\
=&-\frac{1}{2\pi i}(u,u\bar{h})g
+\frac{1}{2\pi i}\Big(\int \bar{u}h\Big) u.
\end{align*}
\end{proof}

\begin{lemma}\label{lemma:d_tS}
For all $h\in\textup{Ran}(H_{u_0})$, we have
\begin{equation}\label{eqn:d_tS}
P_{u_0}\frac{d}{dt}S(t)h=\frac{1}{4\pi}\Big((h,H^2_{u_0}\tilde{e})\tilde{e}
+(h,H_{u_0}\tilde{e})H_{u_0}\tilde{e}\Big).
\end{equation}

\end{lemma}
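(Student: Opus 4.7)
The plan is to differentiate $S(t)h=U^{\ast}(t)T(t)U(t)h$ by the product rule, invoke Lemma~\ref{lemma:comutators} to control $[T,B_u]$, and then conjugate everything back to $t=0$ via the Lax relation. Set $f(t):=U(t)h$ and $\Psi(t):=T(t)f(t)$. Since $B_u$ is anti-self-adjoint, $\dot U^{\ast}=-U^{\ast}B_u$, giving
\[
\tfrac{d}{dt}S(t)h=U^{\ast}(t)\bigl(\dot\Psi(t)-B_{u(t)}\Psi(t)\bigr)=U^{\ast}(t)\bigl(\dot Tf+[T,B_u]f\bigr),
\]
where I used $\dot f=B_uf$ and regrouped $TB_uf-B_uTf=[T,B_u]f$.

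I would then compute $\dot Tf$ explicitly. Since $T\phi=x\phi-\Lambda(\phi)b_u$ and only $b_u$ carries $t$-dependence, $\dot Tf=-\Lambda(f)\dot b_u$. Using $b_u=1-g$, the identity $g(t)=U(t)W(t)g_0$, and the Lax relation $UH_{u_0}^2U^{\ast}=H_u^2$, a short calculation gives $\dot g=i(H_u^2-T_{|u|^2})g=i\Pi(|u|^2b_u)$; but $|u|^2b_u=A\bar A/B^2$ already lies in $L^2_+$, so $\dot b_u=-i|u|^2b_u$, and therefore $\dot Tf=i\Lambda(f)|u|^2b_u$. Next I would substitute the formulas from Lemma~\ref{lemma:comutators} for $[T,H_u^2]f$ and $[T,T_{|u|^2}]f$. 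The crucial cancellation is that the contribution $-i\Lambda(f)|u|^2b_u$ from $-i[T,T_{|u|^2}]f$ kills $\dot Tf$ exactly, while the two scalar multiples of $g$ combine to produce
\[
\dot\Psi-B_u\Psi=\tfrac{1}{4\pi}\Bigl(\textstyle\int|u|^2f\Bigr)g+\tfrac{1}{4\pi}\Bigl(\textstyle\int\bar u f\Bigr)u.
\]

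Finally, I would apply $U^{\ast}$ and rewrite the right-hand side using the data at $t=0$. Equation \eqref{eqn: def e} gives $U^{\ast}g=\tilde e$, and equations \eqref{eqn:Uastu} and \eqref{eqn:H_u_0W} give $U^{\ast}u=W^{\ast}u_0=H_{u_0}\tilde e$. Self-adjointness of $\Pi$ together with the identity $H_u^2g=H_uu=\Pi(|u|^2)$ yields $\int|u|^2f=(f,\Pi(|u|^2))=(f,H_u^2g)$, which by the Lax conjugation equals $(h,H_{u_0}^2\tilde e)$; similarly $\int\bar uf=(f,u)=(h,H_{u_0}\tilde e)$. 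Since both $\tilde e$ and $H_{u_0}\tilde e$ lie in $\textup{Ran}(H_{u_0})$, the projection $P_{u_0}$ acts trivially and the stated identity drops out.

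The main obstacle is precisely the cancellation between $\dot Tf$ and the $-i\Lambda(f)|u|^2b_u$ tail in $[T,T_{|u|^2}]f$: both terms carry out of $\textup{Ran}(H_u)$ into the larger space $T_{|u|^2}(\textup{Ran}(H_u))$, and the only reason they match is the identity $H_ub_u=0$, which forces $B_ub_u=-iT_{|u|^2}b_u=-i|u|^2b_u=\dot b_u$. Without this compatibility between the time-evolution and the infinitesimal shift $T$, the right-hand side would not even lie in $L^2_+$, let alone in $\textup{Ran}(H_{u_0})$, and the whole formula would fail.
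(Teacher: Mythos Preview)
Your proof is correct and follows essentially the same route as the paper: differentiate $S=U^{\ast}TU$ via the product rule, apply the commutator formulas of Lemma~\ref{lemma:comutators}, and translate the scalars back to $t=0$ through the Lax conjugation. The one genuine difference is in how you treat the $\Lambda(f)$-term: the paper differentiates $u=H_ug$ to obtain only $P_u(ig')=-P_u(|u|^2b_u)$ and therefore needs the projection $P_{u_0}$ to see the cancellation, whereas you compute $\dot g$ directly from $g=UWg_0$ and use the observation that $|u|^2b_u=A\bar A/B^2$ already lies in $L^2_+$, so $\dot b_u=-i|u|^2b_u$ and the offending term cancels exactly before projecting. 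This is a small but clean improvement---your argument actually shows $\frac{d}{dt}S(t)h$ itself lies in $\textup{Ran}(H_{u_0})$, so the $P_{u_0}$ in the statement is redundant.
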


\begin{proof}
Using equation \eqref{eqn:d_tB_u}, we have that
\begin{equation*}
P_{u_0}i\frac{d}{dt}S(t)h=P_{u_0}U^{\ast}[T,T_{|u|^2}-\frac{1}{2}H_u^2]Uh
+P_{u_0}U^{\ast}(i\frac{d}{dt}T(t))Uh.
\end{equation*}

\noi
Then, by Lemma \ref{lemma:comutators}, equation \eqref{eqn:g=1-b},
$b_u=1-g$, equations \eqref{eqn: def e} and \eqref{eqn:u1}, we have
\begin{align*}
P_{u_0}i\frac{d}{dt}S(t)h=&P_{u_0}U^{\ast}\bigg(-\frac{1}{2\pi i}\big(\int |u|^2Uh\big) g+\Ld (Uh)|u|^2b_u\\
&+\frac{1}{4\pi i}\big(\int |u|^2Uh\big) g
-\frac{1}{4\pi i}\big(\int \bar{u}Uh\big) u\bigg)+P_{u_0}U^{\ast}\big(ig'\Ld(Uh)\big)\\
=&-\frac{1}{4\pi i}\big(\int |u|^2Uh\big) \tilde{e}-\frac{1}{4\pi i}\big(\int \bar{u}Uh\big) H_{u_0}\tilde{e}\\
&+\Ld (Uh)P_{u_0}U^{\ast}(|u|^2b_u)+\Ld(Uh)P_{u_0}U^{\ast}(ig').
\end{align*}

\noi
In order to compute $g'(t)$, we will differentiate the equality $u=H_{u}g$. We obtain:
\begin{align*}
-iT_{|u|^2}u=[B_u,H_u]g+H_u(g')=-iT_{|u|^2}H_ug-iH_{u}T_{|u|^2}g+iH_u^3g+H_{u}(g').
\end{align*}

\noi
Then, $H_{u}(g'+i\Pi(|u|^2(g-1)))=0$ and thus by \eqref{eqn:Pi(u bar{b_u})=0} we have $P_u(ig')=-P_u\Pi(|u|^2b_u)=-P_u(|u|^2b_u)$.
Consequently, by \eqref{eqn:URan} we have
\begin{equation*}
P_{u_0}U^{\ast}(ig')=U^{\ast}P_u(ig')=-U^{\ast}P_u(|u|^2b_u)=-P_{u_0}U^{\ast}(|u|^2b_u).
\end{equation*}

\noi
Therefore we obtain
\begin{align*}
P_{u_0}\frac{d}{dt}S(t)h=\frac{1}{4\pi }\big(\int |u|^2Uh\big) \tilde{e}
+\frac{1}{4\pi }\big(\int \bar{u}Uh\big) H_{u_0}\tilde{e}.
\end{align*}

To conclude, we only need to rewrite the two parenthesis
so that they do not depend on $U$. By equation \eqref{eqn:H_u},
the definitions of $g$, $\tilde{e}$, and  equation \eqref{sym H_u}, we have:
\begin{align*}
\int |u|^2Uh&=(u,u\cj{Uh})=(u,\Pi(u\cj{Uh}))=(u,H_u(Uh))=(u,UH_{u_0}h)=(U^{\ast}H_ug,H_{u_0}h)\\
&=(H_{u_0}U^{\ast}g,H_{u_0}h)=(H_{u_0}\tilde{e},H_{u_0}h)=(h,H_{u_0}^2\tilde{e}).
\end{align*}

\begin{align}\label{eqn:int bar{u}Uh}
\int \bar{u}Uh=(Uh,u)=(Uh,H_ug)=(h,U^{\ast}H_ug)=(h,H_{u_0}U^{\ast}g)=(h,H_{u_0}\tilde{e}).
\end{align}
\end{proof}

In order to express $S$ without using $U(t)$, we also need to determine the adjoint $S^{\ast}$
of the operator $S$
and prove the commutation relation $S^{\ast}H_{u_0}=H_{u_0}S$. We first determine $T^{\ast}$.

\begin{lemma}\label{lemma Tast}
The adjoint of the operator $T$ on $\textup{Ran} (H_u)$ is the operator $T^{\ast}$ defined by
\begin{equation*}
T^{\ast}f=xf-\Ld (f), \text{ for all } f\in\textup{Ran}(H_u).
\end{equation*}

\end{lemma}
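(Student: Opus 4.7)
My plan is to verify the adjoint identity $(Tf_1,f_2) = (f_1,T^\ast f_2)$ directly for all $f_1,f_2\in\textup{Ran}(H_u)$. First I check that the candidate $T^\ast f=xf-\Ld(f)$ actually maps $\textup{Ran}(H_u)$ into itself: since $\Ld(f)=\lim_{x\to\infty}xf(x)$, the function $xf-\Ld(f)$ decays like $O(1/x)$ at infinity, hence lies in $L^2_+$, and has the same poles as $f$, so by Proposition \ref{PROP:kronecker} it belongs to $\textup{Ran}(H_u)$.

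For the adjoint property the main subtlety is that $xf_1$ and $xf_2$ are not individually in $L^2$, so one cannot split the inner products into separately convergent pieces. Instead I compute the difference $(Tf_1,f_2)-(f_1,T^\ast f_2)$ starting from two absolutely convergent $L^2$-inner products (since $Tf_1,T^\ast f_2,f_1,f_2$ all lie in $L^2$) and then subtract the full integrands so that the indefinite $xf_1\cj{f}_2$ contributions cancel pointwise:
\begin{align*}
(Tf_1,f_2)-(f_1,T^\ast f_2)=\int_{\R}\bigl[-\Ld(f_1)\,b_u\cj{f}_2+\cj{\Ld(f_2)}\,f_1\bigr]\,dx,
\end{align*}
where the right-hand integrand is only $O(1/x)$ at infinity and so must be interpreted as the principal value $\lim_{R\to\infty}\int_{-R}^R$.

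The two principal-value limits $\lim_R\int_{-R}^R f_1\,dx$ and $\lim_R\int_{-R}^R b_u\cj{f}_2\,dx$ are then computed by contour integration. Since every pole of $f_1\in\textup{Ran}(H_u)$ lies in the lower half-plane and $f_1(x)\sim\Ld(f_1)/x$ at infinity, closing the contour in the upper half-plane yields $\lim_R\int_{-R}^R f_1\,dx=-i\pi\Ld(f_1)$. For the second, I use $b_u=1-g$ from Lemma \ref{lemma:g=1-b} to split
\begin{align*}
\int_{-R}^R b_u\cj{f}_2\,dx=\int_{-R}^R\cj{f}_2\,dx-\int_{-R}^R g\cj{f}_2\,dx;
\end{align*}
the second integral is absolutely convergent and tends to $-(g,f_2)$, while $\cj{f}_2$ has all its poles in the upper half-plane, so closing in the lower half-plane gives $\lim_R\int_{-R}^R\cj{f}_2\,dx=+i\pi\cj{\Ld(f_2)}$.

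Substituting these limits and invoking the identity \eqref{Ld(f)}, namely $\Ld(f_2)=-\tfrac{1}{2\pi i}(f_2,g)$ so that $\cj{\Ld(f_2)}=\tfrac{1}{2\pi i}(g,f_2)$, the difference collapses to $\Ld(f_1)(g,f_2)-2i\pi\Ld(f_1)\cj{\Ld(f_2)}=\Ld(f_1)(g,f_2)-\Ld(f_1)(g,f_2)=0$, which proves the adjoint relation. The main obstacle is organizing the bookkeeping so that the non-$L^1$ pieces stay together until they are regularized as principal values; once that is done, the remainder is an elementary residue computation.
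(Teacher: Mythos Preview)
Your argument is correct: after observing that $Tf_1,T^\ast f_2\in L^2$ you legitimately combine the two absolutely convergent inner products, cancel the $xf_1\cj{f}_2$ terms pointwise, and are left with an $L^1$ integrand that you then split into two principal values, each of which exists and is computed by a standard semicircle contour. The residue bookkeeping and the use of \eqref{Ld(f)} are right (the phrase ``tends to $-(g,f_2)$'' should read $(g,f_2)$ for the integral $\int g\cj{f}_2$, but your final line uses the correct value). One minor point: the combined integrand $-\Ld(f_1)b_u\cj{f}_2+\cj{\Ld(f_2)}f_1$ is actually $O(1/x^2)$, not merely $O(1/x)$, since $b_u\to 1$ at infinity and the leading $1/x$ terms cancel; this only strengthens your claim that the integral is well-defined.

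The paper's proof is different in spirit. Rather than working with arbitrary $f_1,f_2\in\textup{Ran}(H_u)$ and reducing to residues, it parametrizes $\textup{Ran}(H_u)$ as $\{H_u f: f\in L^2_+\}$ and computes $(TH_uf_1,H_uf_2)$ by moving the factor $x$ onto the symbol side via the identity $\Pi(xF)=x\Pi(F)+\tfrac{1}{2\pi i}\int F$ (equation \eqref{eqn:basic xf}), together with $b_u=1-g$ and $H_ug=u$. This avoids any principal-value regularization or contour argument: everything stays inside ordinary $L^2$ pairings. Your route is more elementary and self-contained (it needs only Lemma \ref{lemma:g=1-b} and \eqref{Ld(f)}), while the paper's route is shorter once \eqref{eqn:basic xf} and \eqref{eqn:LdH} are in hand and highlights more directly how the Hankel structure produces the $-\Ld(f)$ correction.
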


\begin{proof}
By equations \eqref{eqn:basic xf}, \eqref{eqn:g=1-b}, and \eqref{eqn:LdH}, for all $f_1,f_2\in L^2_+$ we have that:
\begin{align*}
(TH_uf_1,H_uf_2)&=(xH_uf_1-\Ld (H_uf_1)b_u,u\bar{f}_2)
=(H_uf_1,xu\bar{f}_2)-\Ld (H_uf_1)(b_uf_2,u)\\
&=(H_uf_1,\Pi(xu\bar{f}_2))-\Ld (H_uf_1)(f_2(1-g),u)\\
&=\Big(H_uf_1,x\Pi(u\bar{f}_2)+\frac{1}{2\pi i}\int u\bar{f}_2\Big)-\Ld (H_uf_1)\Big((f_2,u)-(f_2,\bar{g}u)\Big)\\
&=(H_uf_1,xH_uf_2-\Ld (H_uf_2))-\Ld (H_uf_1)\Big((f_2,u)-(f_2,H_ug)\Big)\\
&=(H_uf_1,xH_uf_2-\Ld (H_uf_2)).
\end{align*}

\noi
Hence $T^{\ast}\big(H_uf_2\big)=xH_uf_2-\Ld (H_uf_2)$, for all $f_2\in L^2_+$.
\end{proof}

\begin{lemma}\label{lemma:relation S,Sast}
\begin{equation*}
S^{\ast}H_{u_0}=H_{u_0}S
\end{equation*}

\noi
and
\begin{equation}\label{eq:S-Sast}
S=S^{\ast}-\frac{1}{2\pi i}(\cdot,\tilde{e})\tilde{e}.
\end{equation}

\end{lemma}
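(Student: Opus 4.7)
The plan is to derive both identities by conjugating pre-existing relations on $\textup{Ran}(H_{u(t)})$ back to $\textup{Ran}(H_{u_0})$ via the unitary $U(t)$. For the first identity, I would start from the commutation relation $T^{\ast}H_{u(t)}=H_{u(t)}T(t)$ in \eqref{comute T, H} and use the Lax identity \eqref{eqn:H_u}, which (since $U(t)$ is $\C$-linear unitary) rearranges into the equivalent forms $UH_{u_0}=H_{u(t)}U$ and $U^{\ast}H_{u(t)}=H_{u_0}U^{\ast}$. Then
\begin{align*}
S^{\ast}H_{u_0}
=U^{\ast}T^{\ast}U\,H_{u_0}
=U^{\ast}T^{\ast}H_{u(t)}U
=U^{\ast}H_{u(t)}T(t)U
=H_{u_0}U^{\ast}T(t)U
=H_{u_0}S,
\end{align*}
which is the desired commutation relation.

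For the identity \eqref{eq:S-Sast}, the plan is to first compute $T-T^{\ast}$ pointwise on $\textup{Ran}(H_{u(t)})$. By \eqref{eqn:defT} and Lemma \ref{lemma Tast},
\begin{align*}
(T-T^{\ast})f = -\Lambda(f)b_u+\Lambda(f) = \Lambda(f)(1-b_u) = \Lambda(f)\,g,
\end{align*}
using \eqref{eqn:g=1-b}, and then by \eqref{Ld(f)} we have $\Lambda(f)=-\tfrac{1}{2\pi i}(f,g)$. Conjugating by $U(t)$ and using $U^{\ast}(t)g(t)=\tilde{e}(t)$ together with the linearity of $U$ to move $U$ inside the inner product,
\begin{align*}
Sh = U^{\ast}TUh = U^{\ast}T^{\ast}Uh-\tfrac{1}{2\pi i}(Uh,g(t))\,U^{\ast}g(t)
= S^{\ast}h-\tfrac{1}{2\pi i}(h,\tilde{e})\,\tilde{e},
\end{align*}
which is exactly \eqref{eq:S-Sast}.

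The only delicate point is the antilinearity of $H_u$: the identity $H_{u(t)}=U(t)H_{u_0}U(t)^{\ast}$ is a composition of maps (the middle factor being $\C$-antilinear), so one has to verify that the rearrangement $U^{\ast}H_{u(t)}=H_{u_0}U^{\ast}$ is genuinely valid and not just a formal manipulation. Once this is checked, both statements reduce to direct algebraic computation using \eqref{comute T, H}, \eqref{eqn:defT}, Lemma \ref{lemma Tast}, and the definition $\tilde{e}=U^{\ast}g$.
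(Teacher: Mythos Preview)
Your proposal is correct and follows essentially the same route as the paper: conjugate $T^{\ast}H_{u}=H_{u}T$ by $U(t)$ using the Lax identity for the first part, and conjugate the rank-one relation $T=T^{\ast}-\tfrac{1}{2\pi i}(\cdot,g)g$ (which you obtain from \eqref{eqn:defT}, Lemma~\ref{lemma Tast}, \eqref{eqn:g=1-b}, and \eqref{Ld(f)}) for the second.

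One small caveat: you cite \eqref{comute T, H} as an established input, but in the paper that identity is actually \emph{proved for the first time inside this very lemma}, by projecting \eqref{tilde T_ld} onto $\textup{Ran}(H_u)$ to get $T_{\lambda}^{\ast}H_u=H_uT_{\lambda}$ and then differentiating via Lemma~\ref{lemma:S in terms of Sld}. So strictly speaking your argument should include that one-line derivation rather than invoke \eqref{comute T, H} as given. Your remark about the antilinearity of $H_u$ is well taken but harmless: since $U(t)$ is $\C$-linear unitary, the identity $H_{u(t)}=U H_{u_0} U^{\ast}$ is an equality of antilinear maps, and left-multiplying by $U^{\ast}$ (a composition of maps) immediately gives $U^{\ast}H_{u(t)}=H_{u_0}U^{\ast}$.
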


\begin{proof}
By projecting equation \eqref{tilde T_ld} on $\textup{Ran}(H_u)$, we obtain
$T_{\ld}^{\ast}H_u=H_uT_{\ld}$. Then, by Lemma \ref{lemma:S in terms of Sld}
it follows that
$T^{\ast}H_u=H_uT$.
This and equation \eqref{eqn:H_u} yield for all $h\in\text{Ran}(H_{u_0})$ that
\begin{align*}
H_{u_0}Sh=H_{u_0}U^{\ast}TUh=U^{\ast}H_uTUh=U^{\ast}T^{\ast}H_uUh
=U^{\ast}T^{\ast}UH_{u_0}h=S^{\ast}H_{u_0}h.
\end{align*}

\noi
Notice that \eqref{Ld(f)} yields that
\begin{align}\label{T-Tast}
T=T^{\ast}-\frac{1}{2\pi i}(\cdot,g)g.
\end{align}

\noi
Then, \eqref{eq:S-Sast} follows immediately by conjugating the above relation with $U(t)$.
\end{proof}

\begin{proof}[Proof of Theorem \ref{thm:general formula}]
By conjugating equation \eqref{eqn:comutator H^2} by $U(t)$, we obtain:
\begin{align*}
[H_{u_0}^2,S]h=\frac{1}{2\pi i}\Big((h,H_{u_0}^2\tilde{e})\tilde{e}-(h,H_{u_0}\tilde{e})H_{u_0}\tilde{e}\Big),
\end{align*}

\noi
for all $h\in\text{Ran}(H_{u_0})$. Applying this to $h=e_j$ we have
\begin{align*}
(H_{u_0}^2-\ld_j^2)Se_j=\frac{\ld_j}{2\pi i}\Big(\ld_j(e_j,\tilde{e})\tilde{e}
-(\tilde{e},e_j)H_{u_0}\tilde{e}\Big).
\end{align*}

\noi
Suppose that $\ld_j$ is an eigenvalue of multiplicity $m_j$ and that
$M_j$ is the set of all indices $k$ such that $H_{u_0}e_{k}=\ld_j e_{k}$.
Plugging $\tilde{e}=\sum_{k=1}^N(\tilde{e},e_k)e_k$ in the above formula we have:
\begin{align*}
(H_{u_0}^2-\ld_j^2)Se_j
&=\frac{\ld_j}{2\pi i}\sum_{k\notin M_j}\Big(\ld_j (e_j,\tilde{e})(\tilde{e},e_k)-\ld_k (\tilde{e},e_j)(e_k,\tilde{e})\Big)e_k
\\&+\frac{\ld_j^2}{2\pi i}\sum_{k\in M_j}\Big((e_j,\tilde{e})(\tilde{e},e_k)-(\tilde{e},e_j)(e_k,\tilde{e})\Big)e_k.
\end{align*}

\noi
Since
\begin{align*}
(\tilde{e},e_j)=(e^{i\frac{t}{2}H^2_{u_0}}g_0,e_j)=e^{i\frac{t}{2}\ld_j^2}(g_0,e_j)=e^{i\frac{t}{2}\ld_j^2}\beta_j,
\end{align*}

\noi
we obtain
\begin{align*}
(H_{u_0}^2-\ld_j^2)Se_j
&=\frac{\ld_j}{2\pi i}\sum_{k\notin M_j}\Big(\ld_j e^{i\frac{t}{2}(\ld_k^2-\ld_j^2)}\cj{\beta}_j\beta_k
-\ld_k e^{i\frac{t}{2}(\ld_j^2-\ld_k^2)}\beta_j\cj{\beta}_k\Big)e_k
\\&+\frac{\ld_j^2}{2\pi i}\sum_{k\in M_j}(\cj{\beta}_j\beta_k-\beta_j\cj{\beta}_k)e_k
\end{align*}

\noi
Writing
\begin{align*}
S(t)e_j=\sum_{k=1}^Nc_j^k(t)e_k,
\end{align*}

\noi
we have
\begin{align*}
(H_{u_0}^2-\ld_j^2) S(t)e_j=\sum_{k\notin M_j}(\ld_k^2-\ld_j^2)c_j^k(t)e_k.
\end{align*}

\noi
Identifying the coefficients of $(H_{u_0}^2-\ld_j^2) S(t)e_j$ in the basis $\{e_k\}_{k=1}^N$, we obtain that
\begin{align}\label{eqn:cj{beta_j}beta_i in R}
\cj{\beta}_j\beta_{k}\in\R, \text{ for all } k\in M_j
\end{align}

\noi
and
\begin{align*}
c_j^k(t)=\frac{\ld_j}{2\pi i(\ld_k^2-\ld_j^2)}\Big(\ld_j e^{i\frac{t}{2}(\ld_k^2-\ld_j^2)}\cj{\beta}_j\beta_k
-\ld_k e^{i\frac{t}{2}(\ld_j^2-\ld_k^2)}\beta_j\cj{\beta}_k\Big)
\end{align*}

\noi
for all $k\notin M_j$.
Finally, we determine $c_j^k(t)$ for $k\in M_j$ using Lemma \ref{lemma:d_tS}:
\begin{align*}
\frac{d}{dt}c_j^k(t)&=\frac{d}{dt}(S(t)e_j,e_k)=(P_{u_0}\frac{d}{dt}S(t)e_j,e_k)
=\frac{\ld_j^2}{4\pi}\Big((e_j,\tilde{e})(\tilde{e},e_k)+(\tilde{e},e_j)(e_k,\tilde{e})\Big)\\
&=\frac{\ld_j^2}{4\pi}(\cj{\beta}_j\beta_k+\beta_j\cj{\beta}_k)=\frac{\ld_j^2}{2\pi}\cj{\beta}_j\beta_k.
\end{align*}

\noi
Therefore, for $k\in M_j$ we have
\begin{align*}
c_j^k(t)=\frac{\ld_j^2}{2\pi}\cj{\beta}_j\beta_kt+c_j^k(0),
\end{align*}

\noi
where $c_j^k(0)=(S(0)e_j,e_k)=(Te_j,e_k)$.
\end{proof}

\bigskip

\section{Extension of the formula to general initial data}

\begin{proof}[Proof of Corollary \ref{cor:gen case}]
The proof of Theorem \ref{thm:general formula} can be adapted
to the case of a general initial data, as long as
$u_0\in\textup{Ran}(H_{u_0})$, i.e. there exists $g_0\in\textup{Ran}(H_{u_0})$
such that $u_0=H_{u_0}(g_0)$. Writing $g_0=\sum_{j=1}^{\infty}(g_0,e_j)e_j$
in the basis $\{e_j\}_{j=1}^{\infty}$, the fact that
$g_0\in L^2(\R)$ is equivalent to $\sum_{j=1}^{\infty}|(g_0,e_j)|^2<\infty$.
Since $u_0=H_{u_0}(g_0)$ yields
$(u_0,e_j)=\ld_j(e_j,g)$ for all $j\in\N^{\ast}$, it follows that $\{\beta_j\}_{j=1}^{\infty}
=\{\frac{1}{\ld_j}(u_0,e_j)\}_{j=1}^{\infty}\in \ell^2$.

The main difference with the case of rational functions data is that
 $S$ is no longer a matrix,
but an operator acting between infinite dimensional spaces.
Then, the infinitesimal generator of the semi-group
$S_{\ld}$ is not $iS$, but its closure $i\bar{S}$
(like in Proposition \ref{prop:infinitesimal generator}).
This explains the operator $\bar{S}$ appearing in the explicit formula.
\end{proof}

\begin{proposition}\label{prop:xu}
Let $s\geq 1$. If $u_0\in H^s_+$ and $xu_0\in L^{\infty}(\R)$, then the corresponding solution of the
Szeg\"o equation satisfies $xu(t,x)\in L^{\infty}(\R)$ for all $t\in\R$.
\end{proposition}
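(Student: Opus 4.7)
The plan is to introduce $w(t,x):=xu(t,x)$, derive a scalar-source linear evolution equation for $w$, and close a Gronwall-type estimate in the $L^\infty_x$ norm. First I multiply the Szeg\"o equation $i\partial_t u=\Pi(|u|^2 u)$ by $x$. Since $u(t)\in H^s$ with $s\geq 1$ embeds into $L^\infty\cap L^2$, both $f:=|u|^2 u$ and $xf$ lie in $L^1\cap L^2$, so the commutator identity \eqref{eqn:basic xf} applies and yields
\[
i\partial_t w \;=\; T_{|u|^2}(w) + c(t),\qquad c(t):=-\frac{1}{2\pi i}\int_{\R}|u(t,x)|^2 u(t,x)\,dx.
\]
Persistence of $H^s$-regularity (Theorem 1.1 in \cite{pocov}) keeps $\|u(t)\|_{L^\infty}+\|u(t)\|_{L^2}$ uniformly bounded in $t$, and hence $|c(t)|$ is uniformly bounded.

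The crucial step will be the estimate
\[
\|T_{|u|^2}(w)\|_{L^\infty}\;\leq\;C(u_0)\bigl(1+\|w\|_{L^\infty}\bigr).
\]
My strategy is to upgrade $|u|^2 w$ from $L^\infty$ to $H^1$, then use that $\Pi$ is a bounded Fourier multiplier on $H^1$ together with the embedding $H^1(\R)\hookrightarrow L^\infty(\R)$. The key algebraic observation is that $x|u|^2=(xu)\,\bar u\in L^\infty$ with $\|x|u|^2\|_{L^\infty}\leq\|xu\|_{L^\infty}\|u\|_{L^\infty}$. Expanding
\[
\partial_x(|u|^2 w) \;=\; 2\,\textup{Re}(\bar u\,u_x)\,xu + |u|^2 u + |u|^2\,xu_x,
\]
the potentially singular last term is controlled via $\||u|^2\,xu_x\|_{L^2}\leq \|x|u|^2\|_{L^\infty}\|u_x\|_{L^2}\leq\|xu\|_{L^\infty}\|u\|_{L^\infty}\|u\|_{H^1}$, while the first two are handled by H\"older. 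One concludes $\||u|^2 w\|_{H^1}\leq C(u_0)(1+\|w\|_{L^\infty})$, and the claimed $L^\infty$ bound on $T_{|u|^2}(w)$ follows.

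Setting $M(t):=\|w(t)\|_{L^\infty}$, integrating the ODE pointwise in $x$ and taking the supremum gives
\[
M(t) \;\leq\; M(0) + \int_0^t \|T_{|u|^2}(w)(s)\|_{L^\infty}\,ds + \int_0^t|c(s)|\,ds \;\leq\; M(0) + C(u_0)\int_0^t\bigl(1+M(s)\bigr)\,ds,
\]
so Gronwall's inequality yields $M(t)\leq (M(0)+C(u_0)|t|)\,e^{C(u_0)|t|}<\infty$ for all $t\in\R$. The main obstacle to overcome is the well-known unboundedness of the Szeg\"o projector on $L^\infty(\R)$; it is bypassed precisely because the $H^s$-regularity of $u$ combined with the hypothesis $xu\in L^\infty$ forces $|u|^2 w$ into $H^1$ at the cost of only a factor linear in $\|w\|_{L^\infty}$, after which the Sobolev embedding $H^1\hookrightarrow L^\infty$ supplies the needed bound on $T_{|u|^2}(w)$ and closes the Gronwall loop.
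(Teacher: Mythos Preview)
Your approach is essentially the paper's: both use the commutator identity \eqref{eqn:basic xf} to reduce matters to controlling $\Pi(x|u|^2u)$ in $L^\infty$, and both bypass the unboundedness of $\Pi$ on $L^\infty$ by the same trick---estimating $x|u|^2u$ in $H^1$ (exploiting $x|u|^2=(xu)\bar u\in L^\infty$ for the dangerous term) and invoking $H^1\hookrightarrow L^\infty$. The paper packages this as a local fixed-point argument in $X=H^s_+\cap\{f:xf\in L^\infty\}$ followed by globalization via Brezis--G\^alouet and Gronwall, while you write an evolution equation for $w=xu$ and Gronwall directly.

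Two small points to tighten. First, persistence of regularity only gives $u\in C(\R,H^s)$, so $\|u(t)\|_{L^\infty}$ is \emph{locally} bounded in $t$, not uniformly (indeed Corollary~\ref{Corollary} in this very paper shows $\|u(t)\|_{H^s}\to\infty$ can occur); your $C(u_0)$ should be $C\big(\sup_{|s|\le T}\|u(s)\|_{H^s}\big)$, which is harmless for the conclusion. Second, as written your argument is an a~priori estimate: to apply $\|T_{|u|^2}w\|_{L^\infty}\le C(1+\|w\|_{L^\infty})$ you must already know $w(t)\in L^\infty$ on some interval, which is precisely what is to be proved. The paper's local fixed-point step supplies this; without it you need either an approximation argument (e.g.\ by rational data) or a bootstrap to make the Gronwall rigorous.
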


\begin{proof}
The local well-posedness follows using a fixed point argument in the space $(L^{\infty}_t,X)$, where
\begin{align*}
X:=H^s_+(\R)\cap \big\{f\big|xf(x)\in L^{\infty}(\R)\big\}.
\end{align*}

By equation \eqref{eqn:basic xf}, the H\"older inequality, and Sobolev embedding, we have:
\begin{align*}
\Big\|x\int_0^T\Pi\big(|u(t)|^2u(t)\big)dt\Big\|_{L^{\infty}_{t,x}}&\leq T\big\|x\Pi\big(|u(t)|^2u(t)\big)\big\|_{L^{\infty}_{t,x}}\\
&=T\Big\|\Pi\big(x|u(t)|^2u(t)\big)-\frac{1}{2\pi i}\int |u(t)|^2u(t)dx \Big\|_{L^{\infty}_{t,x}}\\
&\leq T\Big\|\Pi\big(x|u(t)|^2u(t)\big)\Big\|_{L^{\infty}_{t,x}}+\frac{T}{2\pi }\Big\|\int |u(t)|^2u(t)dx \Big\|_{L^{\infty}_{t}}\\
&\leq T\big\|\Pi\big(x|u(t)|^2u(t)\big)\big\|_{L^{\infty}_tH^1_x}+\frac{T}{2\pi}\Big\|\int |u(t)|^2u(t)dx\Big\|_{L^{\infty}_{t}}\\
&\leq T\big\|x|u(t)|^2u(t)\big\|_{L^{\infty}_tH^1_x}+\frac{T}{2\pi}\|u\|_{L^{\infty}_{t,x}}\|u\|^2_{L^{\infty}_{t}L^2_x}\\
&\leq  T\big(4\|xu\|_{L^{\infty}_{t,x}}+\|u\|_{L^{\infty}_{t}H^s_x}\big)\|u\|^2_{L^{\infty}_tH^s_x}+\frac{T}{2\pi}\|u\|^3_{L^{\infty}_{t}H^s_x}.
\end{align*}

\noi
The global well-posedness is a consequence of the Brezis-G\"alouet estimate
\begin{align*}
\|u\|_{L^{\infty}(\R)}\leq C\|u\|_{H^{1/2}(\R)}\Bigg(\log \Big(2+\frac{\|u\|_{H^s}}{\|u\|_{H^{1/2}}}\Big)\Bigg)^{\frac{1}{2}},
\end{align*}

\noi
and of Gronwall's inequality.
\end{proof}

\begin{lemma}\label{lim u}
For all $u\in H^{1/2}_+$, we have that $u\in\cj{\textup{Ran}(H_u)}$.

Moreover, if $u\in H^s(\R)$, $s>\frac{1}{2}$ and $xu(x)\in L^{\infty}(\R)$, we have that $u=\lim_{\eps\to 0}H_u(\frac{1}{1-i\eps x})$.
\end{lemma}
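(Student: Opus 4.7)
The plan is to treat the two assertions separately, with both resting on a Fourier-side observation.

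For the first assertion, I would first record the orthogonal decomposition $L^2_+ = \overline{\textup{Ran}(H_u)} \oplus \textup{Ker}(H_u)$, which follows from the identity $(H_u h_1, h_2) = (H_u h_2, h_1)$ in \eqref{sym H_u}: if $(k, H_u f) = 0$ for all $f$, then $(H_u k, f) = \overline{(H_u f, k)} = \overline{(k, H_u f)} = 0$, so $k \in \textup{Ker}(H_u)$. This reduces the claim to showing $(u, h) = 0$ for every $h \in \textup{Ker}(H_u)$. Fix such an $h$. By definition $\Pi(u\bar h) = 0$, hence $\widehat{u\bar h}$ vanishes a.e.\ on $(0,\infty)$. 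The key point is that $u, h \in L^2(\R)$ forces $u\bar h \in L^1(\R)$ by Cauchy--Schwarz, so $\widehat{u\bar h}$ is continuous on $\R$ by the Riemann--Lebesgue lemma; continuity then promotes the a.e.\ vanishing on $(0,\infty)$ to pointwise vanishing on the closed half-line $[0,\infty)$, in particular at $\xi = 0$. Since $\widehat{u\bar h}(0) = \int u\bar h\,dx = (u,h)$, the first claim follows.

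For the second assertion, I would compute directly. The function $\frac{1}{1-i\eps x}$ lies in $L^2_+$ (it is holomorphic on $\C_+$ with $L^2$-norm $\sqrt{\pi/\eps}$), so $H_u\!\left(\tfrac{1}{1-i\eps x}\right) = \Pi\!\left(\tfrac{u}{1+i\eps x}\right)$ is well defined as an element of $L^2_+$ since $\tfrac{u}{1+i\eps x} \in L^2(\R)$. The algebraic identity $\frac{1}{1+i\eps x} = 1 - \frac{i\eps x}{1+i\eps x}$ combined with $\Pi(u) = u$ then gives
\[
H_u\!\left(\tfrac{1}{1-i\eps x}\right) = u - i\eps\,\Pi\!\left(\tfrac{x u}{1+i\eps x}\right).
\]
The plan is to absorb the error term with the hypothesis $xu \in L^\infty(\R)$: the pointwise bound $\left|\tfrac{xu}{1+i\eps x}\right|^2 \leq \|xu\|_{L^\infty}^2/(1+\eps^2 x^2)$ integrates to $\pi\|xu\|_{L^\infty}^2/\eps$, so
\[
\eps\,\bigl\|\Pi\!\left(\tfrac{xu}{1+i\eps x}\right)\bigr\|_{L^2} \leq \sqrt{\pi\eps}\,\|xu\|_{L^\infty} \longrightarrow 0
\]
as $\eps\to 0^+$. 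Thus $H_u(\tfrac{1}{1-i\eps x}) \to u$ in $L^2_+$. Note that the $H^s$-regularity with $s>\tfrac{1}{2}$ is not used in the argument itself—only membership in $L^2_+$ together with $xu \in L^\infty$ is needed—but is present because the statement is quoted in the context of Theorem \ref{general case}.

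The step I expect to require the most care is the first one: the assumption $\Pi(u\bar h) = 0$ is naturally an $L^2$-equality, which only gives vanishing of $\widehat{u\bar h}$ a.e.\ on $(0,\infty)$, whereas the inner-product identity needs the pointwise value at the boundary $\xi = 0$. The bridge is precisely the $L^1$-continuity of the Fourier transform, which is why the elementary observation $u\bar h \in L^1$ is the true heart of the argument. Once that endpoint value is secured, both assertions are short.
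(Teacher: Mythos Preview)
Your proof is correct and, for both assertions, takes a route different from the paper's.

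For the first assertion, the paper argues via the identity $(u,h) = \lim_{\eps\to 0}(H_u h, \tfrac{1}{1-i\eps x})$ for every $h\in L^2_+$, obtained from the regularization $(u,h) = \lim_{\eps}(u,\tfrac{h}{1-i\eps x})$ and the move $(u\bar h,\cdot)=(\Pi(u\bar h),\cdot)$; taking $h\in\textup{Ker}(H_u)$ then gives $(u,h)=0$. Your Fourier-side argument---continuity of $\widehat{u\bar h}$ at $\xi=0$ forced by $u\bar h\in L^1$---is equally short and avoids the limiting procedure. The paper's route has the side benefit that its intermediate identity, combined with \eqref{sym H_u}, immediately yields weak convergence $H_u(\tfrac{1}{1-i\eps x}) \rightharpoonup u$, which it then uses as the first step of the second assertion.

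For the second assertion the difference is more substantial. The paper first records the weak convergence just mentioned, then computes $H_u(\tfrac{1}{1-i\eps x}) = \tfrac{u(x)-u(i/\eps)}{1+i\eps x}$ explicitly via the Cauchy integral, invokes the Sobolev embedding $H^s\hookrightarrow L^\infty$ and the Poisson representation to bound $u$ and $zu(z)$ on $\C_+$, and finally upgrades weak to strong convergence by dominated convergence. Your algebraic decomposition $\tfrac{u}{1+i\eps x}=u-i\eps\tfrac{xu}{1+i\eps x}$ followed by the single $L^2$ estimate $\eps\|\tfrac{xu}{1+i\eps x}\|_{L^2}\leq\sqrt{\pi\eps}\,\|xu\|_{L^\infty}$ bypasses all of this and, as you correctly observe, never uses the hypothesis $s>\tfrac12$. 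This is a genuine simplification of the paper's argument.
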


\begin{proof}
For $h\in L^2_+$, we have that
\begin{align}\label{eqn:eps}
(u,h)=\lim_{\eps\to 0}\Big(u,\frac{h}{1-i\eps x}\Big)
=\lim_{\eps\to 0}\Big(u\bar{h},\frac{1}{1-i\eps x}\Big)
=\lim_{\eps\to 0}\Big(H_uh,\frac{1}{1-i\eps x}\Big).
\end{align}

\noi
Taking $h\in\textup{Ker}(H_u)$, it follows that $(u,h)=0$ and $u\in(\textup{Ker}(H_u))^{\perp}=\cj{\textup{Ran}(H_u)}$.

By \eqref{sym H_u}, the above equation also yields that for all $h\in L^2_+$, we have that
\begin{align*}
(u,h)=\lim_{\eps\to 0}\Big(H_u\big(\frac{1}{1-i\eps x}\big),h\Big).
\end{align*}

\noi
Then, $H_u\big(\frac{1}{1-i\eps x}\big)$ converges weakly to $u$ in $L^2_+$.
We now intend to prove that, if $u\in H^s(\R)$ and $xu(x)\in L^{\infty}(\R)$, then $\big\|H_u\big(\frac{1}{1-i\eps x}\big)\big\|_{L^2}\to\|u\|_{L^2}$.
This yields that the convergence is strong in $L^2_+$.

Computing the Fourier transform with the residue theorem, we have that
\begin{align}\label{eqn: H_u(1/1+ieps x)}
H_u\big(\frac{1}{1-i\eps x}\big)=\Pi\Big(\frac{u(x)}{1+i\eps x}\Big)=
\frac{1}{i\eps}\Pi\Big(\frac{u(x)}{x-\frac{i}{\eps}}\Big)=\frac{1}{i\eps}\cdot\frac{u(x)-u(\frac{i}{\eps})}{x-\frac{i}{\eps}}
=\frac{u(x)-u(\frac{i}{\eps})}{1+i\eps x}.
\end{align}

\noi
By the Sobolev embedding $H^s(\R)\subset L^{\infty}(\R)$ for $s>\frac{1}{2}$,
we have that there exists $C_0>0$ such that $|u(x)|\leq C_0$ for all $x\in\R$. Since $u$ is a holomorphic function in $\C_+$,
we can write using the Poisson integral
\begin{align*}
u(z)=\frac{1}{\pi}\int_{-\infty}^{\infty}\textup{Im}z\frac{u(x)}{|z-x|^2}dx,
\end{align*}

\noi
for all $z\in \C_+$. Then
\begin{align*}
|u(z)|\leq\frac{C_0}{\pi}\textup{Im}z\int_{-\infty}^{\infty}\frac{1}{|z-x|^2}dx=C_0,
\end{align*}

\noi
for all $z\in\C_+$. Thus, $u$ is bounded in $\C_+\cup\R$. Similarly, since $xu(x)\in L^{\infty}(\R)$,
we have that $zu(z)$ is bounded in $\C_+\cup\R$ by a constant $C_1$. In particular, we have that
$\frac{i}{\eps}u(\frac{i}{\eps})\leq C_1$ and thus
$\lim_{\eps\to 0}u(\frac{i}{\eps})=0$. Then, by
\eqref{eqn: H_u(1/1+ieps x)}, we have that $H_u\big(\frac{1}{1-i\eps x}\big)$
converges pointwise to $u(x)$. Furthermore,
\begin{align*}
\Big|H_u\big(\frac{1}{1-i\eps x}\big)\Big|^2&=\Big|\frac{u(x)-u(\frac{i}{\eps})}{1+i\eps x}\Big|^2
\leq |u(x)-u(\frac{i}{\eps})|^2 \leq 2(|u(x)|^2+|u(\frac{i}{\eps})|^2)\leq 4C_0
\end{align*}

\noi
and
\begin{align*}
\Big|H_u\big(\frac{1}{1-i\eps x}\big)\Big|^2&
\leq\frac{2(|u(x)|^2+|u(\frac{i}{\eps})|^2)}{1+\eps^2x^2}\leq \frac{\frac{C_1}{x^2}+C_1\eps^2}{1+\eps^2x^2}=\frac{C_1}{x^2}.
\end{align*}

\noi
Then, the functions $\Big|H_u\big(\frac{1}{1-i\eps x}\big)\Big|^2$ are bounded by
an integrable function. By the dominated convergence theorem, it follows that
$\big\|H_u\big(\frac{1}{1-i\eps x}\big)\big\|_{L^2}\to\|u\|_{L^2}$. Hence,
$H_u\big(\frac{1}{1-i\eps x}\big)\to u$ in $L^2_+$.
\end{proof}

The key point in extending the explicit formula for the solution to the case of general initial data
is the below definition of the operator $T^{\ast}: \textup{Ran}(H_u)\to L^2_+$,
\begin{align} \label{eq:def T star}
T^{\ast}(H_uf)=xH_u(f)+\frac{1}{2\pi i}(u,f).
\end{align}

\noi
If $xu\in L^{\infty}(\R)$, by \eqref{eqn:basic xf} we have that
\begin{align*}
T^{\ast}(H_uf)=\Pi (xu\bar{f}).
\end{align*}

\begin{remark}
If $u\in H^s_+$ for $s>\frac{1}{2}$ and $xu\in L^{\infty}(\R)$,
then the operator $T^{\ast}$ takes values in $\cj{\textup{Ran}(H_u)}$.
\end{remark}

\begin{proof}
For all $f\in \textup{Ran}(H_u)$ and $h\in\textup{Ker}(H_u)$, we have that
\begin{align*}
(T^{\ast}f,h)&=(\Pi(xu\bar{f}),h)=(xu\bar{f},h)=\lim_{\eps\to 0}(xu\bar{f}, \frac{h}{1-i\eps x})=
\lim_{\eps\to 0}(u\bar{h}, x\frac{f}{1-i\eps x})\\
&=\lim_{\eps\to 0}(H_uh, x\frac{f}{1-i\eps x})=0.
\end{align*}

\noi
Then, $T^{\ast}f\in (\textup{Ker}(H_u))^{\perp}=\cj{\textup{Ran}(H_u)}$.
\end{proof}

For $\ld>0$, we introduce the operators $T_{\ld}^{\ast}:L^2_+\to L^2_+$ by
\begin{align*}
T_{\ld}^{\ast}h(x)&=P_ue^{-i\ld x}\mathcal{F}^{-1}(\hat{h}(\xi)\chi_{[\ld,+\infty)}(\xi))= P_u\Big(e^{-i\ld x}h(x)-\frac{e^{-i\ld x}}{2\pi}\int_{\R} h(x-y)\frac{e^{i\ld y}-1}{iy}dy\Big).
\end{align*}

\noi
Then
\begin{align*}
\lim_{\ld\to 0}\frac{T_{\ld}^{\ast}h(x)-h(x)}{\ld}&=P_u\Big(-ixh(x)-\frac{1}{2\pi}\int_{\R}h(x)dx\Big).
\end{align*}

\noi
Let us now conjugate $T^{\ast}$ and $T_{\ld}^{\ast}$ using the operators $U(t)$.
We obtain $S^{\ast}(t)$ and $S^{\ast}_{\ld}(t)$:
\begin{align*}
S^{\ast}(t)=U^{\ast}(t)T^{\ast}U(t),\,\,\,\,\,\,\,\,\,\,\,\,\,S_{\ld}^{\ast}(t)=U^{\ast}(t)T_{\ld}^{\ast}U(t).
\end{align*}

\begin{proposition}\label{prop:infinitesimal generator}
The closure of the operator $-iS^{\ast}$ is the infinitesimal generator of the semi-group $S_{\ld}^{\ast}$.
Moreover, $\textup{Ran}(H_{u_0})$ is a core for the infinitesimal generator of the semi-group $S_{\ld}^{\ast}$.
\end{proposition}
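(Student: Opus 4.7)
The plan is to realize $\{T^\ast_\ld\}_{\ld\geq 0}$ as the restriction of the classical coshift semigroup on $L^2_+$ to the invariant subspace $\cj{\textup{Ran}(H_u)}$, to verify that the smaller subspace $\textup{Ran}(H_u)$ is itself invariant by means of the Hankel commutation relation, to compute the infinitesimal generator on $\textup{Ran}(H_u)$ by a Fourier-side argument, and finally to invoke the standard core theorem for $C_0$ semigroups.

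First I would check that $\tilde T^\ast_\ld h = \Pi(e^{-i\ld x}h)$ defines a $C_0$ semigroup of contractions on $L^2_+$, since on the Fourier side it acts as left-translation $\hat h(\xi)\mapsto \hat h(\xi+\ld)$ on $L^2([0,\infty))$, whose properties are classical. Using the Hankel characterization \eqref{tilde T_ld}, for every $f\in L^2_+$ one has
\[\tilde T^\ast_\ld(H_u f) = H_u(\tilde T_\ld f) = H_u(e^{i\ld x} f)\in \textup{Ran}(H_u),\]
so both $\textup{Ran}(H_u)$ and $\cj{\textup{Ran}(H_u)}$ are invariant under $\tilde T^\ast_\ld$. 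On the closure, $P_u$ acts as the identity, hence $T^\ast_\ld = \tilde T^\ast_\ld\big|_{\cj{\textup{Ran}(H_u)}}$ inherits the structure of a $C_0$ semigroup of contractions. Conjugation by the unitary $U(t)$, together with the fact that \eqref{eqn:URan} is actually an equality $U(t)\textup{Ran}(H_{u_0})=\textup{Ran}(H_{u(t)})$ by \eqref{eqn:H_u}, transfers both the semigroup property and the invariance of $\textup{Ran}(H_{u_0})$ to $\{S^\ast_\ld(t)\}$ on $\cj{\textup{Ran}(H_{u_0})}$.

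The key step is to identify the generator on $\textup{Ran}(H_{u_0})$. For $h = H_u f\in\textup{Ran}(H_u)$, on $(0,\infty)$ one has $\hat h(\xi) = \widehat{u\bar f}(\xi)$, and the hypothesis $xu\in L^\infty(\R)$ forces $xu\bar f\in L^2(\R)$, so $\hat h \in H^1((0,\infty))$ with $\hat h'(\xi)=-i\widehat{xu\bar f}(\xi)$. Standard left-translation semigroup theory then yields
\[\lim_{\ld\to 0^+}\frac{T^\ast_\ld h - h}{\ld} = \mathcal F^{-1}\big(\hat h'\cdot \chi_{[0,\infty)}\big) = -i\,\Pi(xu\bar f) = -iT^\ast h\]
in $L^2_+$, the last equality being exactly \eqref{eq:def T star} combined with \eqref{eqn:basic xf}. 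Conjugating by $U(t)$ gives $\textup{Ran}(H_{u_0})\subset D(A)$ with $Ah=-iS^\ast h$ for all $h\in\textup{Ran}(H_{u_0})$, where $A$ denotes the infinitesimal generator of $\{S^\ast_\ld(t)\}$.

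To conclude I would invoke the standard core theorem for $C_0$-semigroups: any subspace of $D(A)$ which is dense in the ambient Banach space and invariant under the semigroup is automatically a core for $A$. Density of $\textup{Ran}(H_{u_0})$ in $\cj{\textup{Ran}(H_{u_0})}$ is by definition, containment in $D(A)$ was established in the previous paragraph, and invariance under $\{S^\ast_\ld(t)\}$ has been derived. Hence $A=\cj{-iS^\ast}$ and $\textup{Ran}(H_{u_0})$ is a core. The main technical hurdle is the Fourier-side regularity step: $xh$ itself need not lie in $L^2$ for a general $h\in\textup{Ran}(H_u)$, but the combination $xh + (u,f)/(2\pi i) = \Pi(xu\bar f)$ does lie in $L^2_+$, and this is precisely where the hypothesis $xu\in L^\infty(\R)$ enters to yield genuine $L^2$-convergence of the difference quotient rather than mere pointwise convergence.
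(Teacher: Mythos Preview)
Your proof is correct and follows essentially the same route as the paper's: compute the generator on $\textup{Ran}(H_u)$ via the Hankel commutation relation $\tilde T_\ld^\ast H_u=H_u\tilde T_\ld$, establish invariance of $\textup{Ran}(H_{u_0})$ under $S^\ast_\ld$, and conclude by the core theorem (the paper cites it as Reed--Simon, Theorem~X.49). Your Fourier-side computation of the difference quotient, showing $\hat h\in H^1((0,\infty))$ from $xu\in L^\infty$, is in fact more rigorous than the paper's version, which takes the limit of the formula for $T^\ast_\ld h$ somewhat formally; otherwise the two arguments coincide.
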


\begin{proof}
If $h=H_uf\in \textup{Ran}(H_u)$, then we have
\begin{align*}
\lim_{\ld\to 0}\frac{T_{\ld}^{\ast}h(x)-h(x)}{\ld}&=-iP_u\Big(xh(x)+\frac{1}{2\pi i}(u,f)\Big)=-iT^{\ast}h.
\end{align*}

\noi
Conjugating with $U(t)$, we obtain that
 the restriction of the infinitesimal generator of
$S_{\ld}^{\ast}$ to $\textup{Ran}(H_{u_0})$ is $-iS^{\ast}$.

Moreover, by conjugating $T_{\ld}^{\ast}H_u=H_uT_{\ld}$ with $U(t)$, we obtain
$S_{\ld}^{\ast}H_u=H_uS_{\ld}$. This yields
\begin{align*}
S^{\ast}_{\ld}(\textup{Ran}(H_{u_0}))\subset \textup{Ran}(H_{u_0}).
\end{align*}

\noi
By Theorem X.49, vol. II in \cite{Reed and Simon}, we have that $\textup{Ran}(H_{u_0})$ is a core
of the infinitesimal generator of $S^{\ast}_{\ld}$. Then, the infinitesimal generator of
$S^{\ast}_{\ld}$ is the closure $-iA$ of $-iS^{\ast}$.
\end{proof}

\begin{proof}[Proof of Theorem \ref{general case}]
According to Proposition \ref{prop:xu}, we have that $u(t)\in H^s$ and $xu(t,x)\in L^{\infty}(\R)$
for all $t\in\R$.
Then, by Lemma \ref{lim u}, we obtain that
\begin{align*}
u(t)=\lim_{\eps\to 0}H_{u(t)}(\frac{1}{1-i\eps x}) \text{ in } L^2_+.
\end{align*}

\noi
By Plancherel's identity, this is equivalent to
\begin{align*}
\lim_{\eps\to 0}\mathcal{F}\Big(u(t)\big(1-\frac{1}{1+i\eps x}\big)\Big)= 0 \text{ in } L^2(\R_+).
\end{align*}

\noi
Since,
\begin{align*}
\ft{u}(t,\ld)&=\int_{\R}e^{-i\ld x}u(t,x)dx=\int_{\R}e^{-i\ld x}
u(t)\big(1-\frac{1}{1+i\eps x}\big)dx+\int_{\R}e^{-i\ld x}u(t)\frac{1}{1+i\eps x}dx\\
&=\mathcal{F}\Big(u(t)\big(1-\frac{1}{1+i\eps x}\big)\Big)(\ld)+\int_{\R}e^{-i\ld x}u(t)\frac{1}{1-i\eps x}dx.
\end{align*}

\noi
we obtain that
\begin{align*}
\ft{u}(t,\ld)&=\lim_{\eps\to 0}\Big(u(t),e^{i\ld x}\frac{1}{1-i\eps x}\Big)dx.
\end{align*}

The rest of the proof follows the same lines as the proof of Theorem \ref{thm:general formula},
but uses $T^{\ast}$ and $S^{\ast}$ instead of $T$ and $S$. Special attention should be given to
the fact that the infinitesimal generator of the semi-group $S_{\ld}^{\ast}$ is not $-iS^{\ast}$,
but its closure $-iA$.
\end{proof}

\section{Soliton resolution in the case of strongly generic, rational function data}

\bigskip

We prove that all the solutions
with strongly generic, rational function initial data $u_0\in\M(N)_{\textup{sgen}}$ resolve
into N solitons and a remainder which tends to zero in all the $H^s$-norms for $s\geq 0$,
when $t\to\pm\infty$.

\begin{proof}[Proof of Theorem \ref{thm:soliton emergence}]
The strategy is to write all the vectors
in $\text{Ran} (H_{u_0})$ in the basis $\{e_j\}_{j=1}^N$ and make formula
\eqref{eqn:formula for u} more explicit.

According to Theorem \ref{thm:general formula}, we have
\begin{align*}
S(t)e_j=\Big(\frac{\ld_j^2\nu_j^2}{2\pi}t+(S(0)e_j,e_j)\Big)e_j+\sum_{i=1, i\neq j}^N a_{ji}(t)e_i.
\end{align*}

\noi
Since $a_{ji}(t)$ are linear combinations of
$e^{\pm i\frac{t}{2}(\ld_j^2-\ld_i^2)}$
with constant coefficients, there exists $M>0$ such that
\begin{align*}
|a_{ji}(t)|\leq M,
\end{align*}

\noi
for all $j\neq i$ and all $t\in\R$. Denoting $A_j=\frac{\ld_j^2\nu_j^2}{2\pi}t+(S(0)e_j,e_j)$, the operator
$S$ in the basis $\{e_j\}_{j=1}^N$ can be written as the following matrix:

\[S=\left(
\begin{matrix}
A_1 & a_{12} & \cdots & a_{1N} \\
a_{21} & A_2 & \cdots & a_{2N}  \\
\vdots & \vdots  & \ddots & \vdots \\
a_{N1} & a_{N2} & \cdots & A_N
\end{matrix}
\right)\]

\noi
Let us first compute $\textup{Im}(A_j)=\textup{Im}\big(S(0)e_j,e_j\big)$ for $t$ large enough.
By equation \eqref{eq:S-Sast} and noting that $\tilde{e}(0)=g_0$, we have that
\begin{align*}
2i\textup{Im}\big(S(0)e_j,e_j\big)&=\big(S(0)e_j,e_j\big)-\big(e_j,S(0)e_j\big)=\big((S(0)-S(0)^{\ast})e_j,e_j\big)\\
&=\big(\frac{-1}{2\pi i}(e_j,g_0)g_0,e_j\big)=\frac{i}{2\pi}|(g_0,e_j)|^2.
\end{align*}

\noi
Therefore
\begin{align}\label{eqn:Imc_j(0)}
\textup{Im}\big(S(0)e_j,e_j\big)=\frac{\nu_j^2}{4\pi}.
\end{align}

Then, we notice that
\begin{align}\label{elem integral}
\int_{-\infty}^{\infty}\frac{dx}{|x-at+ib|^2|x-ct+id|^2}=O(\frac{1}{t^2}) \text{ as } t\to\pm\infty,
\end{align}

\noi
if $0<a<c$ and $b,d\neq 0$. This can be proved by estimating the integral on each of the intervals
$(-\infty, at-1], [at-1,at+1], [at+1,ct-1], [ct-1,ct+1], [ct+1, \infty)$ if $t>0$
 large enough, and similarly for $t<0$.
Since $\textup{Im}(A_j)=\frac{\nu_j^2}{4\pi}>0$ and by the strong genericity hypothesis
$\frac{\ld_j^2\nu_j^2}{2\pi}\neq \frac{\ld_k^2\nu_k^2}{2\pi}$ for $j\neq k$, this yields that
\begin{align*}
\frac{1}{(x-A_j)(x-A_k)}=O(\frac{1}{|t|}) \text{ in } L^2(\R) \text{ as } t\to\pm\infty,
\end{align*}

\noi
Moreover, using $\big\|\frac{1}{x-A_j}\big\|_{L^{\infty}}=\frac{1}{\textup{Im}A_j}=\frac{4\pi}{\nu_j^2}$, we have that
$\frac{1}{(x-A_j)(x-A_k)}=O(\frac{1}{t})$ in $H^s(\R)$ for all $s\geq 0$.
Furthermore, we have
\begin{align*}
\Big\|\frac{1}{(x-A_j)(x-A_k)}\Big\|_{L^{\infty}}&=\Big\|\frac{1}{A_k-A_j}\Big(\frac{1}{(x-A_j)}-\frac{1}{(x-A_k)}\Big)\Big\|_{L^{\infty}}\\
&\leq \frac{1}{|A_k-A_j|}\Big(\Big\|\frac{1}{x-A_j}\Big\|_{L^{\infty}}+\Big\|\frac{1}{x-A_j}\Big\|_{\infty}\Big)\\
&=\frac{1}{|A_k-A_j|}
\Big(\frac{4\pi}{\nu_j^2}+\frac{4\pi}{\nu_k^2}\Big)=O(\frac{1}{t}).
\end{align*}

\noi
Therefore,
$\frac{\det (S-xI)}{(A_1-x)\dots (A_N-x)}-1\to 0$ in $L^{\infty}(\R)$ and in $H^s$, $s\geq 0$, as $t\to\pm\infty$,
since it is equal to a linear combination of
$\frac{1}{(x-A_j)(x-A_k)}$,...,$\frac{1}{(x-A_1)\dots(x-A_N)}$. We notice that, using the definition of the determinant, the terms $\frac{1}{x-A_j}$
do not appear in the above linear combination.

Then,
\begin{align*}
(S-xI)^{-1}&=\frac{1}{\det (S-xI)}\left(
\begin{matrix}
C_{11} & C_{12} & \cdots & C_{1N} \\
C_{21} & C_{22} & \cdots & C_{2N}  \\
\vdots & \vdots  & \ddots & \vdots \\
C_{N1} & C_{N2} & \cdots & C_{NN}
\end{matrix}\right)\\
&=\frac{(A_1-x)\dots (A_N-x)}{\det (S-xI)}\left(
\begin{matrix}
\frac{C_{11}}{(A_1-x)\dots (A_N-x)} & \frac{C_{12}}{(A_1-x)\dots (A_N-x)} & \cdots & \frac{C_{1N}}{(A_1-x)\dots (A_N-x)} \\
\frac{C_{21}}{(A_1-x)\dots (A_N-x)} & \frac{C_{22}}{(A_1-x)\dots (A_N-x)} & \cdots & \frac{C_{2N}}{(A_1-x)\dots (A_N-x)}  \\
\vdots & \vdots  & \ddots & \vdots \\
\frac{C_{N1}}{(A_1-x)\dots (A_N-x)} & \frac{C_{N2}}{(A_1-x)\dots (A_N-x)} & \cdots & \frac{C_{NN}}{(A_1-x)\dots (A_N-x)}
\end{matrix}\right),\\
\end{align*}

\noi
where $C_{jj}$ is the cofactor of $A_j-x$, equal to the sum of
$(A_1-x)\dots(A_{j-1}-x)(A_{j+1}-x)\dots (A_N-x)$ and a linear combination of terms
containing at most $N-2$ factors $(A_j-x)$, and $C_{ij}$, $i\neq j$ is the cofactor of $a_{ij}$,
equal to a linear combination of terms
containing at most $N-2$ factors $(A_j-x)$.
Then, we have
\begin{align*}
(S-xI)^{-1}=\left(
\begin{matrix}
\frac{1}{A_1-x}+O(\frac{1}{t}) & O(\frac{1}{t}) & \cdots & O(\frac{1}{t}) \\
O(\frac{1}{t}) & \frac{1}{A_2-x}+O(\frac{1}{t}) & \cdots & O(\frac{1}{t})  \\
\vdots & \vdots  & \ddots & \vdots \\
O(\frac{1}{t}) & O(\frac{1}{t}) & \cdots & \frac{1}{A_N-x}+O(\frac{1}{t})
\end{matrix}
\right) \text{ as } t\to\pm\infty \text{ in } H^s(\R).
\end{align*}

\noi
Therefore,
\begin{align*}
W(S-xI)^{-1}Wg_0&=W(S-xI)^{-1}(\beta_1e^{i\frac{t}{2}\ld_1^2},\dots,\beta_Ne^{i\frac{t}{2}\ld_N^2})^t \\
&=\Big(\frac{e^{it\ld_1^2}\beta_1}{x-A_1}+O(\frac{1}{t}),\dots,\frac{e^{it\ld_N^2}\beta_N}{x-A_N}+O(\frac{1}{t})\Big)^t.
\end{align*}

\noi
Since $u_0=\sum_{j=1}^N(u_0,e_j)e_j$ and by \eqref{sym H_u},
\begin{equation}\label{eqn:(u0,ej)}
(u_0,e_j)=(H_{u_0}g_0,e_j)=(H_{u_0}e_j,g_0)=\ld_j \cj{(g_0,e_j)}=\ld_j\cj{\beta_j},
\end{equation}

\noi
we have
\begin{align*}
u(t)=\frac{1}{2\pi}\big(u_0, W(S-xI)^{-1}Wg_0\big)&=\frac{1}{2\pi}\cdot\frac{e^{-it\ld_1^2}\ld_1\cj{\beta}^2_1}{x-\bar{A}_1}
+\dots+\frac{1}{2\pi}\cdot\frac{e^{-it\ld_N^2}\ld_N\cj{\beta}^2_N}{x-\bar{A}_N}+O(\frac{1}{t})
\end{align*}

\noi
in $H^{s}$, $s\geq 0$. Since $\text{Im}(\bar{A}_j)=-\frac{\nu_j^2}{4\pi}<0$, we have that $u\in H^{s}_+$. Moreover, by \eqref{soliton}, we have that
each of the functions $\frac{1}{2\pi}\cdot\frac{e^{-it\ld_j^2}\ld_1\cj{\beta}^2_j}{x-\bar{A}_j}$
is a soliton of speed $c=\frac{\ld_j^2\nu_j^2}{2\pi}$ and frequency $\omega=\ld_j^2$.
\end{proof}

Let us notice that the result in Theorem \ref{thm:soliton emergence}
 can also be restated in terms of $N$-solitons.

\begin{definition}
A $N$-soliton is a solution of the Szeg\"o equation $u(t)$, such that there exist $N$
solitons $\frac{C_1(t)}{x-q_1(t)},\dots,\frac{C_N(t)}{x-q_N(t)}$ satisfying
\begin{align*}
&\bigg\|u(t)-\sum_{j=1}^N \frac{C_j(t)}{x-q_j(t)}\bigg\|_{H^{1/2}_+}\to 0 \text{ as } t\to -\infty.
\end{align*}

\noi
If, moreover, there exist $\delta_j\in\R$, $j=1,2,\dots,N$ such that
\begin{align*}
&\bigg\|u(t)-\sum_{j=1}^N \frac{C_j(t)}{x-\delta_j-q_j(t)}\bigg\|_{H^{1/2}_+}\to 0 \text{ as } t\to +\infty,
\end{align*}

\noi
we say that the $N$-soliton is pure or that the collision of the $N$ solitons
$\frac{C_j(t)}{x-p_j(t)}$ is elastic, in the sense that there
is no loss of energy in the collision.
\end{definition}

Theorem \ref{thm:soliton emergence} states for $s=1/2$ that if $u_0\in\M(N)_{\textup{sgen}}$,
then the corresponding solution is a pure $N$-soliton. Moreover, there is no
shift in the trajectories of the N solitons,
i.e. $\delta_j=0$ for all $j=1,2,\dots,N$. This situation is characteristic
to completely integrable equations. For the one dimensional cubic NLS,
KdV and mKdV, which are all completely integrable,
 it is known that $N$-solitons exist and are pure \cite{Zakharov, Hirota}.
For the gKdV equation with fourth order nonlinearity, which is not completely integrable,
it was proved in \cite{Martel Merle} that the collision of solitons fails to be
elastic by loss of a small quantity of energy.

\bigskip

\section{Asymptotic behavior of the solution in the case of non-generic, rational function data}

\bigskip

We show that
when $u_0\in\M(2)$ is such that $H_{u_0}^2$ has a double eigenvalue,
then the solution $u$ behaves as the sum of a soliton and a remainder,
which tends to zero in the $H^s$-norms, $0\leq s<1/2$. However,
$\|u(t)\|_{H^s}\to \infty$ if $s>1/2$.
An example of such an initial condition is $u_0=\frac{2}{x+i}-\frac{4}{x+2i}$.
 The operator $H_{u_0}^2$
has the double eigenvalue $(\frac{1}{3})^2$ in this case.

Let us consider an orthonormal basis $\{\tilde{e}_1,\tilde{e}_2\}$ of
$\textup{Ran}(H_{u_0})$ such that $H_{u_0}\tilde{e}_j=\ld\tilde{e}_j$.
Denoting $\tilde{\beta}_j=(g_0,\tilde{e}_j)$ and $\tilde{\nu}_j=|\tilde{\beta}_j|$
we have $g_0=\tilde{\beta}_1\tilde{e}_1+\tilde{\beta}_2\tilde{e}_2$
 and $\|g_0\|_{L^2}^2=\tilde{\nu}_1^2+\tilde{\nu}_2^2$.
By \eqref{eqn:cj{beta_j}beta_i in R}, we have that $\cj{\tilde{\beta}}_1\tilde{\beta}_2\in\R$. We assume
that $\cj{\tilde{\beta}}_1\tilde{\beta}_2=\cj{\tilde{\nu}}_1\tilde{\nu}_2$, and thus
$\tilde{\beta}_j=e^{i\theta}\tilde{\nu}_j$ for $j=1,2$.

We make the following change of basis
\begin{align*}
e_1:=&\frac{1}{\|g_0\|_{L^2}}(\tilde{\nu}_1\tilde{e}_1+\tilde{\nu}_2\tilde{e}_2),\\
e_2:=&\frac{1}{\|g_0\|_{L^2}}(\tilde{\nu}_2\tilde{e}_1-\tilde{\nu}_1\tilde{e}_2).
\end{align*}

\noi
Notice that this is also an orthonormal basis of $\textup{Ran}(H_{u_0})$ and $H_{u_0}e_j=\ld e_j$.
Moreover, setting
$\beta_j:=(g_0,e_j)$ and $\nu_j=|\beta_j|$, we have
\begin{align}\label{beta2=0}
\beta_2:=(g_0,e_2)=0,
\end{align}

\noi
and $\nu_2:=|\beta_2|=0$. In the case when $\cj{\tilde{\beta}}_1\tilde{\beta}_2=-\cj{\tilde{\nu}}_1\tilde{\nu}_2$,
we can similarly choose an orthonormal basis for which $\beta_2=\nu_2=0$.

\begin{lemma}\label{lemma F(t)}
With the notations in
Theorem \ref{thm:conterexample} we set
$c_j(0)=(S(0)e_1,e_j)$, $d_j(0)=(S(0)e_2,e_j)$ for $j=1,2$ and
\begin{align}\label{B}
&A:=\frac{\ld^2\nu_1^2}{2\pi},\\
&B:=\frac{\ld^2\nu_1^2}{\pi}(c_1(0)-d_2(0)),\notag\\
&C:=(c_1(0)-d_2(0))^2+4c_2(0)d_1(0).\notag
\end{align}

Then, $4A^2C-B^2>0$ and $\textup{Im}(B)=\frac{\ld^2\nu_1^4}{4\pi^2}>0$.
\end{lemma}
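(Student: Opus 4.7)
The plan is to reduce both assertions to computations with the matrix of $T$ in the basis $\{e_1,e_2\}$, exploiting the normalization $\beta_2=0$ built into the choice of basis, together with the identity $T=T^{\ast}-\frac{1}{2\pi i}(\cdot,g_0)g_0$ from \eqref{T-Tast}. Note that at $t=0$ one has $S(0)=T$, so $c_j(0)$ and $d_j(0)$ are matrix entries of $T$.

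For the imaginary part of $B$, I would repeat the computation preceding \eqref{eqn:Imc_j(0)} in the proof of Theorem \ref{thm:soliton emergence}: using \eqref{T-Tast},
\[
2i\,\textup{Im}(Te_j,e_j)=((T-T^{\ast})e_j,e_j)=-\frac{|(g_0,e_j)|^2}{2\pi i}=\frac{i\nu_j^2}{2\pi},
\]
which gives $\textup{Im}(c_1(0))=\nu_1^2/(4\pi)$ and $\textup{Im}(d_2(0))=\nu_2^2/(4\pi)=0$. Substituting into the definition \eqref{B} yields $\textup{Im}(B)=\frac{\ld^2\nu_1^2}{\pi}\cdot\frac{\nu_1^2}{4\pi}=\frac{\ld^2\nu_1^4}{4\pi^2}$, which is strictly positive because $\nu_1^2=\|g_0\|_{L^2}^2>0$ (as $u_0=H_{u_0}g_0\neq 0$ forces $g_0\neq 0$, and $\nu_2=0$).

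For the first inequality, a direct expansion of $A$, $B$, $C$ shows that the $(c_1(0)-d_2(0))^2$ contributions cancel, leaving
\[
4A^2C-B^2=\frac{4\ld^4\nu_1^4}{\pi^2}\,c_2(0)\,d_1(0),
\]
so it suffices to show $c_2(0)\,d_1(0)>0$. Applying \eqref{T-Tast} once more with $(g_0,e_2)=\beta_2=0$ yields
\[
c_2(0)=(Te_1,e_2)=(T^{\ast}e_1,e_2)-\tfrac{\bar\beta_1}{2\pi i}(g_0,e_2)=(e_1,Te_2)=\overline{d_1(0)},
\]
so $c_2(0)\,d_1(0)=|d_1(0)|^2\ge 0$, and everything reduces to proving $d_1(0)\neq 0$.

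The only nontrivial point is this non-degeneracy, which I would settle by contradiction. If $d_1(0)=0$ then also $c_2(0)=\overline{d_1(0)}=0$, so the matrix of $T$ in the basis $\{e_1,e_2\}$ is diagonal and in particular $Te_2=d_2(0)e_2$. Thus $e_2$ is an eigenvector of $T$ with eigenvalue $d_2(0)\in\R$ (by the previous paragraph). But Lemma \ref{lemma:eigen T} asserts that the eigenvalues of $T$ are the complex conjugates of the poles of $u_0$, and since $u_0\in\M(2)$ all poles lie strictly in the lower half-plane, so every eigenvalue of $T$ has strictly positive imaginary part. This contradicts $d_2(0)\in\R$, forcing $d_1(0)\neq 0$ and hence $4A^2C-B^2=\frac{4\ld^4\nu_1^4}{\pi^2}|d_1(0)|^2>0$.
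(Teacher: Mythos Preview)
Your proof is correct and follows essentially the same route as the paper: both compute $\textup{Im}(B)$ via \eqref{eqn:Imc_j(0)}, both reduce $4A^2C-B^2$ to $\frac{4\ld^4\nu_1^4}{\pi^2}\,c_2(0)d_1(0)$, and both use \eqref{T-Tast} (equivalently \eqref{eq:S-Sast}) together with $\beta_2=0$ to obtain $c_2(0)=\overline{d_1(0)}$.

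The only genuine difference is in the contradiction step ruling out $d_1(0)=0$. The paper observes that $\beta_2=0$ forces $\Lambda(e_2)=0$ via the identity $\beta_j=2\pi i\,\overline{\Lambda(e_j)}$, and then the defining formula $Te_2=xe_2-\Lambda(e_2)b_{u_0}$ gives $(x-a)e_2=0$, hence $e_2=0$. You instead use that $\textup{Im}(d_2(0))=\nu_2^2/(4\pi)=0$ (already established) together with Lemma~\ref{lemma:eigen T}, which guarantees that every eigenvalue of $T$ has strictly positive imaginary part, to rule out the real eigenvalue $d_2(0)$. Your argument is a bit cleaner since it reuses a lemma already proved rather than introducing the auxiliary identity for $\Lambda(e_j)$; the paper's argument is slightly more self-contained. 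Both are perfectly valid.
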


\begin{proof}
By equation \eqref{eqn:Imc_j(0)} we have that
$\textup{Im}\,c_1(0)=\frac{\nu_1^2}{4\pi}$
and $\textup{Im}\,d_2(0)=\frac{\nu_2^2}{4\pi}=0$.
Then, we obtain
\begin{align}\label{Im B}
\textup{Im}(B)=\frac{\ld^2\nu_1^2}{\pi}\textup{Im}c_1(0)=\frac{\ld^2\nu_1^4}{4\pi^2}.
\end{align}

\noi
Let us notice that
\begin{align}\label{eqn:beta_j}
\beta_j=2\pi i\cj{\Ld (e_j)}.
\end{align}

\noi
Indeed, since $e_j\in\text{Ran}(H_{u_0})$, there exists $f_j\in L^2_+$
such that $e_j=H_{u_0}(f_j)$ and by equation \eqref{eqn:LdH} we have
\begin{align*}
\Ld (e_j)=\Ld (H_{u_0}(f_j))=-\frac{1}{2\pi i}(u_0,f_j)=-\frac{1}{2\pi i}(H_{u_0}g_0,f_j)
=-\frac{1}{2\pi i}(H_{u_0}f_j,g_0)=-\frac{1}{2\pi i}\cj{\beta}_j.
\end{align*}

\noi
Then,
\begin{align*}
4A^2C-B^2=&4\Big(\frac{\ld^2\nu_1^2}{2\pi}\Big)^2\Big((c_1(0)-d_2(0))^2+4c_2(0)d_1(0)\Big)-4\Big(\frac{\ld^2\nu_1^2}{2\pi}\Big)^2(c_1(0)-d_2(0))^2\\
=&16\Big(\frac{\ld^2\nu_1^2}{2\pi}\Big)^2c_2(0)d_1(0).
\end{align*}

\noi
By equation \eqref{eq:S-Sast} and noticing that $\tilde{e}(0)=g_0$, we have
\begin{align*}
d_1(0)=(S(0)e_2,e_1)=(S^{\ast}(0)e_2,e_1)-\frac{1}{2\pi i}(e_2,g_0)(g_0,e_1)=(S^{\ast}(0)e_2,e_1)=(e_2,S(0)e_1)=\cj{c_2(0)}.
\end{align*}

\noi
Thus,
\begin{align*}
4A^2C-B^2=16\Big(\frac{\ld^2\nu_1^2}{2\pi}\Big)^2|d_2(0)|^2.
\end{align*}

Suppose by absurd that $d_2(0)=(S(0)e_2,e_1)=0$. Since $(e_2,e_1)=0$,
and since $e_1,e_2, S(0)e_2$ belong to the two dimensional complex
vector space $\textup{Ran}(H_{u_0})$, it results that there exists $a\in\C$ such that
$S(0)e_2=ae_2$. Using the fact that $S(0)=T$ and the definition of $T$, we obtain that
$e_2(x-a)=\Ld(e_2)b_{u_0}$. Then, by equation \eqref{beta2=0} and \eqref{eqn:beta_j}, we obtain that
$\Ld(e_2)=0$ and therefore $e_2=0$, which is a contradiction. Hence, $4A^2C-B^2>0$.
\end{proof}

\begin{proof}[Proof of Theorem \ref{thm:conterexample}]
Let us first express $S$ in the basis $\{e_1,e_2\}$ of $\text{Ran}(H_{u_0})$.
By Theorem \ref{thm:general formula} we have
\begin{align*}
S(t)e_1=c_1(t)e_1+c_2(t)e_2,
\end{align*}

\noi
with $c_1(t)=\frac{\ld^2\nu_1^2}{2\pi}t+c_1(0)$ and $c_2(t)=\frac{\ld^2}{2\pi}\cj{\beta}_1\beta_2t+c_2(0)=c_2(0)$, and
\begin{align*}
S(t)e_2=d_1(t)e_1+d_2(t)e_2,
\end{align*}

\noi
with $d_1(t)=\frac{\ld^2}{2\pi}\cj{\beta}_1\beta_2t+d_1(0)=d_1(0)$
and $d_2(t)=\frac{\ld^2\nu_2^2}{2\pi}t+d_2(0)=d_2(0)$. We denoted
$c_j(0)=\big(S(0)e_1,e_j\big)$ and $d_j(0)=\big(S(0)e_2,e_j)$, $j=1,2$.
Moreover, by equation \eqref{eqn:cj{beta_j}beta_i in R}, we have that $\cj{\beta}_1\beta_2\in\R$.

Therefore, in the basis $\{e_1,e_2\}$, the operator $S$ is the matrix
\[S=\left(
\begin{matrix}
c_1 & d_1 \\
c_2 & d_2
\end{matrix}
\right).\]

\noi
and its characteristic equation is $x^2-(c_1+d_2)+c_1d_2-d_1c_2=0$.
Since $(\bar{\beta}_1\beta_2)^2=\nu_1^2\nu_2^2$, we obtain that the discriminant of this equation
writes
\begin{align}\label{delta}
\Delta=&(c_1-d_2)^2+4d_1c_2=\Big(\frac{\ld^2 \nu_1^2}{2\pi}t+c_1(0)-d_2(0)\Big)^2
+4c_2(0)d_1(0)\\
=&\Big(\frac{\ld^2\nu_1^2}{2\pi}\Big)^2
t^2+\frac{\ld^2\nu_1^2}{\pi}(c_1(0)-d_2(0))t+\big(c_1(0)-d_2(0)\big)^2+4c_2(0)d_1(0)\notag\\
=&A^2t^2+Bt+C,\notag
\end{align}

\noi
where $A,B,C$ are defined in \eqref{B}.
\noi
The eigenvalues of $S$ will be written in terms of $\sqrt{\Delta}$,
where we use the principal determination of the square root. In order to do so,
we have to make sure that $\Delta$ is not negative. We will show that when $|t|$
is large enough, $\Delta$ cannot be a real number.
In what follows we suppose that $t>0$. The case $t<0$ can be treated similarly.

Using equations \eqref{eqn:Imc_j(0)} and $(\cj{\beta}_1\beta_2)^2=\nu_1^2\nu_2^2$, we obtain
\begin{align*}
\text{Im}(\Delta)&=\frac{\ld^2\nu_1^2}{\pi}\big(\text{Im}(c_1(0))-\text{Im}(d_2(0))\big)t
+\text{Im}\Big(\big(c_1(0)-d_2(0)\big)^2+4c_2(0)d_1(0)\Big)\\
&=\frac{\ld^2\nu_1^4}{4\pi^2}t
+\text{Im}\Big(\big(c_1(0)-d_2(0)\Big)^2+4c_2(0)d_1(0)\Big)
\end{align*}

\noi
and thus $\text{Im}(\Delta)\neq 0$ for $|t|$ large enough. Using the Taylor approximation
$(1+x)^{1/2}=1+\frac{x}{2}-\frac{x^2}{8}+\frac{x^3}{16}+x^3\eps(x)$ if $|x|<1$, we have by \eqref{delta} that
\begin{align*}
\sqrt{\Delta}=&At\Big(1+\frac{B}{A^2t}+\frac{C}{A^2t^2}\Big)^{1/2}\\
=&At\Big(1+\frac{B}{2A^2t}+\frac{C}{2A^2t^2}-\frac{1}{8}\big(\frac{B}{A^2t}+\frac{C}{A^2t^2}\big)^2
+\frac{1}{16}\big(\frac{B}{A^2t}+\frac{C}{A^2t^2}\big)^3+\big(\frac{B}{A^2t}+\frac{C}{A^2t^2}\big)^3\eps \big(\frac{B}{A^2t}+\frac{C}{A^2t^2}\big)\Big)\\
=&At\Big(1+\frac{B}{2A^2t}+\frac{1}{t^2}\Big(\frac{C}{2A^2}-\frac{B^2}{8A^4}\Big)+\frac{1}{t^3}\Big(-\frac{BC}{4A^4}+\frac{B^3}{16A^6}\Big)+O(\frac{1}{t^4})\Big)\\
=&At+\frac{B}{2A}+\frac{4A^2C-B^2}{8A^3}\cdot\frac{1}{t}+\frac{B(B^2-4A^2C)}{16A^6}\cdot\frac{1}{t^2}+O(\frac{1}{t^3})
\end{align*}

\noi
We set
\begin{align}\label{F}
F(t):=\frac{4A^2C-B^2}{8A^3}\cdot\frac{1}{t}+\frac{B(B^2-4A^2C)}{16A^6}\cdot\frac{1}{t^2}+O(\frac{1}{t^3}).
\end{align}

\noi
By Lemma \ref{lemma F(t)}, we have that
\begin{align}
|F(t)|=&\frac{4A^2C-B^2}{8A^3}\cdot\frac{1}{t}+O(\frac{1}{t^2})\label{|F|},\\
\textup{Im}\,F(t)=&-\frac{\ld^2\nu_1^4}{4\pi^2}\cdot\frac{(4A^2C-B^2)}{16A^6}\cdot\frac{1}{t^2}+O(\frac{1}{t^3})\label{ImF}
\end{align}

\noi
with $4A^2C-B^2>0$. Then, we have
\begin{align}\label{racine delta}
\sqrt{\Delta}=At+\frac{B}{2A}+F(t)=At+c_1(0)-d_2(0)+F(t).
\end{align}

\noi
and the eigenvalues of $S$ are
\begin{align}
E_1=&\frac{c_1+d_2+\sqrt{\Delta}}{2}=\frac{\ld^2\nu_1^2}{2\pi}t
+c_1(0)+\frac{F(t)}{2}\label{eqn:E1}\\
E_2=&\frac{c_1+d_2-\sqrt{\Delta}}{2}=d_2(0)-\frac{F(t)}{2}.\label{eqn:E2}
\end{align}

\noi
Therefore,
\begin{align*}
(S-xI)^{-1}=\frac{1}{\det (S-xI)}\left(
\begin{matrix}
d_2-x & -d_1 \\
-c_2 & c_1-x
\end{matrix}
\right)
=\frac{1}{(x-E_1)(x-E_2)}\left(
\begin{matrix}
d_2-x & -d_1 \\
-c_2 & c_1-x
\end{matrix}
\right)
\end{align*}

\noi
and
\begin{align*}
(S-xI)^{-1}Wg_0=(S-xI)^{-1}(e^{i\frac{t}{2}\ld^2}\beta_1,0)^t=\Big(e^{i\frac{t}{2}\ld^2}\frac{\big(d_2(0)-x\big)\beta_1}{(x-E_1)(x-E_2)},
-e^{i\frac{t}{2}\ld^2}\frac{c_2(0)\beta_1}{(x-E_1)(x-E_2)}e_2\Big).
\end{align*}

\noi
Since $u_0=\ld\bar{\beta}_1e_1+\ld\bar{\beta}_2e_2=\ld\bar{\beta}_1e_1$, we obtain
\begin{align}\label{interim u}
u(t)=&\frac{1}{2\pi}\big(u_0,W(S-xI)^{-1}Wg_0\big)=\frac{\ld e^{-it\ld^2}}{2\pi}\cdot\frac{\Big(\cj{d_2(0)}-x\Big)\cj{\beta}^2_1}{(x-\bar{E}_1)(x-\bar{E}_2)}.
\end{align}

\noi
Using \eqref{eqn:E2}, we obtain that
\begin{align*}
u(t)=-\frac{\frac{\ld}{2\pi}e^{-it\ld^2}\cj{\beta}_1^2}{x-\cj{E}_1}
+\bar{F}(t)\frac{\frac{\ld}{4\pi}e^{-it\ld^2}\cj{\beta}_1^2}{(x-\bar{E}_1)(x-\bar{E}_2)}.
\end{align*}

\noi
Let us denote
\begin{align*}
R(t,x)=\bar{F}(t)\frac{\frac{\ld}{4\pi}e^{-it\ld^2}\cj{\beta}_1^2}{(x-\bar{E}_1)(x-\bar{E}_2)}.
\end{align*}

\noi
We will study the $H^s$-norms of $R$, for $s\geq 0$ .
First, we determine $\text{Im}(E_1)$ and $\text{Im}(E_2)$.
By equations \eqref{eqn:E2}, \eqref{eqn:Imc_j(0)}, and \eqref{beta2=0}, we have
\begin{align*}
\text{Im}(E_2)=\textup{Im}(d_2(0))-\frac{\textup{Im}F(t)}{2}=
\frac{\nu_2^2}{4\pi}-\frac{\textup{Im}F(t)}{2}=-\frac{\textup{Im}F(t)}{2}.
\end{align*}

\noi
and similarly, we obtain that
\begin{align*}
\text{Im}(E_1)=\frac{\nu_1^2}{4\pi}+\frac{\textup{Im}F(t)}{2}.
\end{align*}

\noi
Let us now estimate $\|R(t,x)\|_{H^s}$. First, we write
\begin{align}\label{R}
R(t,x)=\frac{\bar{F}(t)}{\bar{E}_1-\bar{E}_2}\cdot\frac{\ld}{4\pi}e^{-it\ld^2}
\cj{\beta}_1^2\big(\frac{1}{x-\bar{E}_1}-\frac{1}{x-\bar{E}_2}\Big).
\end{align}

\noi
We compute the $\dot{H}^s$-norm, $s\geq 0$, of each of the two terms in $R$.
Let $p\in\C$, $\textup{Im}p<0$. By \eqref{residue thm}, we have that
\begin{align*}
\Big\|\frac{1}{x-p}\Big\|_{\dot{H}^s}^2=\int_{0}^{\infty}\xi^{2s}\big|\mathcal{F}\Big(\frac{1}{x-p}\Big)(\xi)\Big|^2d\xi
=c\int_0^{\infty}\xi^{2s}|e^{-ip\xi}|^2d\xi=c\int_0^{\infty}\xi^{2s}e^{2\textup{Im}(p)\xi}d\xi.
\end{align*}

\noi
Integrating by parts, we can explicitly compute the last integral.
If $p=\bar{E}_2$, then $\textup{Im}(p)=\textup{Im}(\bar{E_2})$ and we obtain
\begin{align*}
\Big\|\frac{1}{x-\bar{E}_2}\Big\|_{\dot{H}^s}=O\Big(\frac{1}{|\textup{Im}(\bar{E}_2)|^{(2s+1)/2}}\Big)
=O\Big(\frac{1}{|\textup{Im}F(t)|^{(2s+1)/2}}\Big).
\end{align*}

\noi
More precisely, by \eqref{ImF}  there exist $c,C>0$ such that
\begin{align*}
c|t|^{2s+1}\leq \Big\|\frac{1}{x-\bar{E}_2}\Big\|_{\dot{H}^s}\leq C|t|^{2s+1}.
\end{align*}

\noi
Similarly, for $p=\bar{E}_1$, we have $\textup{Im}(p)=\textup{Im}(\bar{E}_1)
=-\frac{\nu_1^2+\nu_2^2}{4\pi}-\frac{\textup{Im}F(t)}{2}$ and thus
\begin{align*}
\Big\|\frac{1}{x-\bar{E}_1}\Big\|_{\dot{H}^s}=O(1).
\end{align*}

\noi
Consequently, by \eqref{R}, \eqref{eqn:E1}, \eqref{eqn:E2}, \eqref{|F|}, \eqref{ImF}
 we obtain for $0\leq s<\frac{1}{2}$ that
\begin{align*}
\|R(t,x)\|_{H^s}&\leq C\frac{|F(t)|}{|E_1-E_2|}\Big(\Big\|\frac{1}{x-\bar{E}_1}\Big\|_{L^2}
+\Big\|\frac{1}{x-\bar{E}_2}\Big\|_{L^2}\Big)+
C\frac{|F(t)|}{|E_1-E_2|}\Big(\Big\|\frac{1}{x-\bar{E}_1}\Big\|_{\dot{H}^s}
+\Big\|\frac{1}{x-\bar{E}_2}\Big\|_{\dot{H}^s}\Big)\\
&\leq \frac{C}{|t|^2}\big(|t|+|t|^{2s+1}\big).
\end{align*}

\noi
and thus $\|R(t,x)\|_{H^s}\to 0$ for $0\leq s<\frac{1}{2}$.
 For $s>\frac{1}{2}$ we have that
\begin{align*}
\|R(t,x)\|_{\dot{H}^s}\geq C\frac{|F(t)|}{|E_1-E_2|}
\Big(\Big\|\frac{1}{x-\bar{E}_2}\Big\|_{H^s}-\Big\|\frac{1}{x-\bar{E}_1}\Big\|_{H^s}\Big)
\geq \frac{C}{|t|^2}\big(|t|^{2s+1}-|t|\big).
\end{align*}

\noi
Therefore, $\|R(t,x)\|_{H^s}\to+\infty$ if $s>\frac{1}{2}$.

Moreover, for $s=1/2$ we have
\begin{align*}
&c\frac{|F(t)|}{|E_1-E_2|}
\Big(\Big\|\frac{1}{x-\bar{E}_2}\Big\|_{L^2}-\Big\|\frac{1}{x-\bar{E}_1}\Big\|_{L^2}+
\Big\|\frac{1}{x-\bar{E}_2}\Big\|_{\dot{H}^{1/2}}-\Big\|\frac{1}{x-\bar{E}_1}\Big\|_{\dot{H}^{1/2}}\Big)\\
&\leq \|R(t,x)\|_{H^{1/2}}\leq C\frac{|F(t)|}{|E_1-E_2|}\Big(\Big\|\frac{1}{x-\bar{E}_1}\Big\|_{L^2}+\Big\|\frac{1}{x-\bar{E}_2}\Big\|_{L^2}+
\Big\|\frac{1}{x-\bar{E}_1}\Big\|_{\dot{H}^{1/2}}+\Big\|\frac{1}{x-\bar{E}_2}\Big\|_{\dot{H}^{1/2}}\Big).
\end{align*}

\noi
and thus there exist $0<c\leq C$ such that
\begin{align*}
&c\leq \|R(t,x)\|_{H^{1/2}}\leq C
\end{align*}

\noi
for $|t|$ large enough. We proceed similarly for $\|R(t,x)\|_{L^{\infty}}$.
\begin{align*}
&c\frac{|F(t)|}{|E_1-E_2|}
\Big(\Big\|\frac{1}{x-\bar{E}_2}\Big\|_{L^{\infty}}-\Big\|\frac{1}{x-\bar{E}_1}\Big\|_{L^{\infty}}\Big)
\leq \|R(t,x)\|_{L^{\infty}}\\
&\leq C\frac{|F(t)|}{|E_1-E_2|}\Big(\Big\|\frac{1}{x-\bar{E}_1}
\Big\|_{L^{\infty}}+\Big\|\frac{1}{x-\bar{E}_2}\Big\|_{L^{\infty}}\Big).
\end{align*}

\noi
Since $\big\|\frac{1}{x-\bar{E}_j}\big\|_{L^{\infty}}=\frac{1}{|\textup{Im}E_j|}$ for $j=1,2$,
there exist $0<c\leq C$ such that
\begin{align*}
c<c\frac{1}{t^2}(t^2-1)
\leq \|R(t,x)\|_{L^{\infty}}\leq C\frac{1}{t^2}(t^2+1)<C.
\end{align*}

\noi
Hence, $R(t,x)$ stays away from zero in the $H^{1/2}$-norm and $L^{\infty}$-norm.

Setting
\begin{align*}
p(t):=\frac{\ld^2\nu_1^2}{2\pi}t
+\text{Re}(c_1(0))-i\frac{\nu_1^2}{4\pi}\\
\end{align*}

\noi
we have $\bar{E}_1(t)=p(t)+O(\frac{1}{t})$ as $t\to\pm\infty$ and
\begin{align}\label{u non-generic}
u(t,x)=&-\frac{\frac{\ld}{2\pi}\cj{\beta}_1^2e^{-it\ld^2}}{x-p(t)}
+R(t,x)+\tilde{\eps}(t,x).
\end{align}

\noi
where $\tilde{\eps}(t,x)=-\frac{\ld}{2\pi}\cj{\beta}_1^2e^{-it\ld^2}\Big(\frac{1}{x-\bar{E}_1}-\frac{1}{x-p(t)}\Big)$
and
\begin{align*}
\tilde{\eps}(t,x)=C\frac{\bar{E}_1-p(t)}{(x-\bar{E}_1)(x-p(t))}=O(\frac{1}{t}) \text{ in all } H^s, s\geq 0.
\end{align*}

By \eqref{soliton}, the first term in the sum in \eqref{u non-generic}
is a soliton.
Using equation \eqref{beta2=0}, we have that $\|u_0\|_{L^2}^2=(u_0,u_0)=(H_{u_0}g_0,H_{u_0}g_0)=\ld^2\nu_1^2$.
In \cite[Lemma 3.5]{pocov} it was shown that $H_{u_0}$ is a Hilbert-Schmidt operator of
Hilbert-Schmidt norm $\frac{\|u_0\|_{\dot{H}^{1/2}}}{\sqrt{2\pi}}$. Then,
$2\ld^2=\textup{Tr}(H_{u_0}^2)=\frac{\|u_0\|^2_{\dot{H}^{1/2}}}{2\pi}$. Therefore, the soliton satisfies
\begin{align*}
\big|\frac{\ld}{2\pi}\cj{\beta}_1^2e^{-it\ld^2}\Big|&=\frac{\ld\nu_1^2}{2\pi}
=\frac{\ld^2\nu_1^2}{2\pi\ld}=\frac{\|u_0\|_{L^2}^2}{\sqrt{\pi}\|u_0\|_{\dot{H}^{1/2}}},\\
\textup{Im}(p)&=-\frac{\nu_1^2}{4\pi}=-\frac{\ld^2\nu_1^2}{\ld^2}=-\frac{\|u_0\|_{L^2}^2}{\|u_0\|_{\dot{H}^{1/2}}^2}.
\end{align*}

\noi
We set $\eps(t,x)=R(t,x)+\tilde{\eps}(t,x)$. Then, $\eps(t,x)\to 0$ as
$t\to\pm\infty$ in all the $H^s$-norms, $0\leq s<1/2$.
However, $\lim_{t\to\infty}\|\eps(t,x)\|_{H^s}=\infty$
if $s>1/2$ and $t\to\pm\infty$.
\end{proof}

\begin{proof}[Proof of Corollary \ref{Corollary}]
Notice that the Sobolev norms of solitons are constant in time.
Then, the solution in Theorem \ref{thm:conterexample},
having a non-generic initial data $u_0\in\M(2)$ such that $H_{u_0}$ has a double eigenvalue,
provides an example of a solution whose
$H^s$-norms, with $s>1/2$ grow
\[\|u(t)\|_{H^s}\geq C|t|^{2s-1} \text{ if } s>1/2\]

\noi
and $|t|$ is big enough.

This does not contradict the complete integrability of the Szeg\"o equation,
since the conservation laws $J_{2n}=(u,H_u^{2n-2}(u))$ can all be controlled by the
$H^{1/2}_+$-norm, as it was noticed in Remark \ref{conservation laws}.
\end{proof}

\section{Generalized action-angle coordinates}

On $L^2_+(\R)$ we introduce the symplectic form
\begin{equation*}
\omega(u,v)=4 \textup{Im}\int_{\R}u\bar{v}.
\end{equation*}

\noi
A function $F:L^2_+(\R)\to\R$ admits a Hamiltonian vector field $X_F$ if
\begin{equation*}
d_uF(h)=\omega(h,X_{F}(u)),
\end{equation*}

\noi
for all $u,h\in L^2_+(\R)$. If the functions $F,G:L^2_+(\R)\to\R$ admit
the Hamiltonian vector fields $X_F,X_G$, then we define the Poisson bracket of
$F$ and $G$ by:
\begin{equation*}
\{F,G\}(u)=\omega(X_F(u),X_G(u))=d_uG(X_F(u)).
\end{equation*}

A consequence of the Lax pair is the existence of an
infinite sequence of conservation laws as we noticed in Corollary \ref{cor:conserv}.

We now introduce the Szeg\"o hierarchy, i.e. the evolution equations
associated to the Hamiltonian vector fields of $J_{2n}$ for all $n\in\N^{\ast}$, and prove that each of
these equations possesses a Lax pair. We will need the following lemma:

\begin{lemma}
For all $f\in L^2(\R)$ we have
\begin{equation}\label{eqn basic}
(I-\Pi )f=\cj{\Pi (\bar{f})}.
\end{equation}

\noi
As a consequence, the following identity holds:
\begin{equation}
H_{aH_u(a)}(h)=H_u(a)H_a(h)+H_u(a\Pi (\bar{a}h)).
\end{equation}

\end{lemma}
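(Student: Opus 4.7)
The plan is to prove the two identities in sequence, using Fourier analysis for the first and then using the first identity as a tool for the second.

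For \eqref{eqn basic}, the proof is a direct check in Fourier space. Since $\widehat{\bar g}(\xi) = \overline{\hat g(-\xi)}$ and $\widehat{\Pi f}(\xi) = \hat f(\xi)\chi_{[0,\infty)}(\xi)$, one computes
\[
\widehat{\overline{\Pi(\bar f)}}(\xi) = \overline{\widehat{\Pi(\bar f)}(-\xi)} = \overline{\overline{\hat f(\xi)}\chi_{[0,\infty)}(-\xi)} = \hat f(\xi)\chi_{(-\infty,0]}(\xi) = \widehat{(I-\Pi)f}(\xi)
\]
up to a null set. Conceptually, conjugation interchanges positive and negative frequencies, so $\overline{\Pi(\bar f)}$ extracts precisely the negative-frequency part of $f$.

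For the second identity, I would begin with $H_{aH_u(a)}(h) = \Pi\bigl(a H_u(a)\bar h\bigr)$ and apply \eqref{eqn basic} to $f = a\bar h$ to obtain
\[
a\bar h = \Pi(a\bar h) + (I-\Pi)(a\bar h) = H_a(h) + \overline{\Pi(\bar a h)}.
\]
Multiplying by $H_u(a)$ and applying $\Pi$ gives
\[
H_{aH_u(a)}(h) = \Pi\bigl(H_u(a)H_a(h)\bigr) + \Pi\bigl(H_u(a)\,\overline{\Pi(\bar a h)}\bigr).
\]
The first term is a product of two elements of $L^2_+$, so its Fourier support lies in $[0,\infty)$ and $\Pi$ acts as the identity on it; hence it equals $H_u(a)H_a(h)$.

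The crux is to identify the second term with $H_u(a\Pi(\bar a h)) = \Pi\bigl(u\bar a\,\overline{\Pi(\bar a h)}\bigr)$; equivalently, to show $\Pi\bigl((H_u(a)-u\bar a)\,\overline{\Pi(\bar a h)}\bigr) = 0$. A second application of \eqref{eqn basic}, this time to $u\bar a$, yields $H_u(a) - u\bar a = -(I-\Pi)(u\bar a) = -\overline{\Pi(\bar u a)}$, so both factors in the product have Fourier support in $(-\infty,0]$; consequently so does their product, and $\Pi$ annihilates it. The main obstacle is purely bookkeeping---ensuring that the various pointwise products lie in spaces on which $\Pi$ is well defined---but this follows from the regularity assumptions on $u$ and $a$.
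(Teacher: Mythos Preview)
Your proof is correct and follows essentially the same route as the paper's: a Fourier-space verification of \eqref{eqn basic}, then the decomposition $a\bar h = H_a(h) + \overline{\Pi(\bar a h)}$ followed by the replacement of $H_u(a)$ by $u\bar a$ inside $\Pi$. The paper makes this last replacement in a single unjustified line, whereas you spell out why it holds (the difference $H_u(a)-u\bar a$ has purely negative Fourier support, as does $\overline{\Pi(\bar a h)}$, so their product is killed by $\Pi$); this is a welcome clarification rather than a different argument.
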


\begin{proof}
The first equation is equivalent to $f=\Pi (f)+\cj{\Pi (\bar{f})}$ and it follows
by passing into the Fourier space. Then
\begin{align*}
H_{aH_u(a)}(h)&=\Pi (aH_u(a)\bar{h})=\Pi \Big(H_u(a)\big(\Pi (a\bar{h})+(I-\Pi)(a\bar{h})\big)\Big)\\
&=H_u(a)H_a(h)+\Pi (H_u(a)\cj{\Pi(\bar{a}h)})
=H_u(a)H_a(h)+\Pi (u\bar{a}\cj{\Pi(\bar{a}h)})\\
&=H_u(a)H_a(h)+H_u(a\Pi(\bar{a}h)).
\end{align*}
\end{proof}

\begin{proposition}
Let $u\in H^s_+$, $s>\frac{1}{2}$. The Hamiltonian vector field associated to $J_{2n}(u)$ is
\begin{equation}
X_{J_{2n}}(u)=\frac{1}{2i}\sum_{k=0}^{n-1}H_u^{2n-2k-1}(g)H_u^{2k}(g)
\end{equation}

\noi
Moreover,
\begin{equation}\label{eqn: H_X_J_2n}
H_{X_{J_{2n}}}(u)=[B_{u,n},H_u],
\end{equation}

\noi
where
\begin{equation}
B_{u,n}(h)=-\frac{i}{4}\sum_{j=0}^{2n-2}H_u^{j}(g)\Pi (\cj{H_u^{2n-2-j}(g)}h).
\end{equation}

\end{proposition}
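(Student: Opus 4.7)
My plan is to prove the two assertions in sequence, first the formula for $X_{J_{2n}}(u)$ and then the Lax identity $H_{X_{J_{2n}}(u)} = [B_{u,n}, H_u]$.

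For the Hamiltonian vector field, the key observation is that $u \mapsto H_u$ is $\mathbb{C}$-linear in the symbol, so $\frac{d}{d\varepsilon}\big|_{\varepsilon=0} H_{u+\varepsilon h} = H_h$. Differentiating $J_{2n}(u) = (u, H_u^{2n-2}u)$ at $u$ in direction $h \in L^2_+$ (with $\varepsilon \in \mathbb{R}$) by the product rule gives three contributions: $(h, H_u^{2n-2}u)$, $(u, H_u^{2n-2}h)$, and $\sum_{k=0}^{2n-3}(u, H_u^{k} H_h H_u^{2n-3-k} u)$. Self-adjointness of $H_u^{2n-2}$ combines the first two into $2\textup{Re}(h, H_u^{2n-2}u)$. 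For the middle sum, I would repeatedly apply the Hankel symmetry $(H_u a, b) = (H_u b, a)$ together with $u = H_u(g)$ to convert $H_u^{2n-3-k} u = H_u^{2n-2-k}(g)$ and commute $H_h$ past the $H_u$'s, producing pairings $(h, H_u^{i}(g)\,\overline{H_u^{j}(g)}\,\cdots)$. After regrouping, the real and imaginary parts assemble into $d_u J_{2n}(h) = 4\,\textup{Im}(h, X_{J_{2n}}(u))$ with $X_{J_{2n}}(u)$ exactly as stated; the factor $1/(2i)$ arises from the symplectic normalization $\omega = 4\,\textup{Im}(\cdot,\cdot)$.

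For the Lax identity, my main tool is the \emph{polarized} version of the given identity $H_{aH_u(a)}(h) = H_u(a)H_a(h) + H_u(a\Pi(\bar a h))$: replacing $a$ by $a+c$ and subtracting the two diagonal terms yields
\begin{equation*}
H_{aH_u(c)+cH_u(a)}(h) = H_u(a)H_c(h) + H_u(c)H_a(h) + H_u(a\,\Pi(\bar c h)) + H_u(c\,\Pi(\bar a h)).
\end{equation*}
Writing $b_k := H_u^{2k}(g)$, the definition of $X_{J_{2n}}$ reads $2iX_{J_{2n}}(u) = \sum_{k=0}^{n-1} b_k H_u(b_{n-1-k})$, and symmetrizing under $k \leftrightarrow n-1-k$ gives $4iX_{J_{2n}}(u) = \sum_{k=0}^{n-1}\bigl[b_k H_u(b_{n-1-k}) + b_{n-1-k}H_u(b_k)\bigr]$. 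Applying $H_\cdot$ termwise via the polarized identity and collapsing the two symmetric copies, I obtain
\begin{equation*}
2i\,H_{X_{J_{2n}}(u)}(h) = \sum_{k=0}^{n-1} H_u(b_k)\Pi(b_{n-1-k}\bar h) + \sum_{k=0}^{n-1} H_u\bigl(b_k\,\Pi(\bar b_{n-1-k} h)\bigr).
\end{equation*}

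It then remains to show this expression equals $2i[B_{u,n},H_u](h)$. Expanding $[B_{u,n},H_u](h) = B_{u,n}(H_u h) - H_u(B_{u,n} h)$ and splitting the sum defining $B_{u,n}$ according to the parity of $j$ (setting $j = 2k$ or $j = 2k+1$, so $a_j = b_k$ or $H_u(b_k)$ respectively), the first piece $B_{u,n}(H_u h)$ produces terms of type $a_j \Pi(\bar a_{2n-2-j} H_u h)$ that, after applying the projection formula $(I-\Pi)f = \overline{\Pi \bar f}$ from equation (\ref{eqn basic}) inside $H_u(B_{u,n} h)$, cancel against the "unwanted" contributions and leave precisely the two sums above. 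The main obstacle I anticipate is this algebraic bookkeeping in the last step: one has to track carefully how the parity-split contributions of $B_{u,n}$ recombine, and use the $j \leftrightarrow 2n-2-j$ symmetry of the index set together with $\overline{\Pi(\bar f)} = (I-\Pi)f$ to cancel all terms that are not of the two required types. Once this matching is carried out the identity $H_{X_{J_{2n}}(u)} = [B_{u,n},H_u]$ is established, which is equivalent (by linearity of $u\mapsto H_u$) to the Lax pair formulation of the evolution generated by $J_{2n}$.
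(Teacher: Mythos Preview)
Your approach is plausible but takes a genuinely different route from the paper. The paper does \emph{not} work with a fixed $n$: it introduces the generating function
\[
J(x,u)=\sum_{n\geq 0}x^nJ_{2n+2}(u)=(u,w(x)),\qquad w(x)=(1-xH_u^2)^{-1}u,
\]
computes $d_uJ(x,u)(h)=\omega(h,X(x))$ with $X(x)=\tfrac{1}{2i}\big(w(x)+xw(x)H_uw(x)\big)$, and then reads off $X_{J_{2n}}$ as the coefficient of $x^{n-1}$. For the Lax identity, the paper uses the recursion $w=u+xH_u^2(w)$ together with the lemma $H_{aH_u(a)}=H_u(a)H_a+H_u\big(a\Pi(\bar a\,\cdot)\big)$ to write $H_{iX(x)}=G_uH_u+H_uD_u$, and then invokes the structural symmetry $(H_{iX}h_1,h_2)=(H_{iX}h_2,h_1)$ to conclude $H_{iX}=C_uH_u+H_uC_u$ with $C_u=\tfrac12(G_u+D_u)$; extracting coefficients and using anti-linearity of $H_u$ gives the commutator form. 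This symmetry trick entirely bypasses the parity-split bookkeeping you anticipate.

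Your direct strategy---differentiate $J_{2n}$ termwise, polarize the product identity, and match against the expanded commutator $[B_{u,n},H_u]$ by splitting $j$ even/odd---is sound in principle, and the polarized identity you wrote down is correct. What the generating-function method buys is that all $n$ are handled simultaneously and the delicate index matching is replaced by a one-line symmetry argument; what your method buys is that it stays closer to first principles and makes the role of the lemma more transparent term by term. The last step you flag as ``the main obstacle'' is indeed substantial: when you expand $H_u\big(B_{u,n}h\big)$ you will need, for the odd-$j$ pieces $a_{2k+1}=H_u(b_k)$, the identity $H_u\big(H_u(b_k)\Pi(\overline{H_u(b_{n-2-k})}h)\big)=H_u(b_k)\Pi\big(b_{n-2-k}\,\overline{H_uh}\big)+\cdots$ coming from $(I-\Pi)f=\overline{\Pi\bar f}$, and then re-index carefully to see the telescoping. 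This can be done, but it is precisely what the paper's symmetry argument is designed to avoid.
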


\begin{proof}
The proof follows using the above lemma and similar computations as in the proof of Theorem 8.1 in \cite{PGSG}. Denote
\begin{align*}
&w(x):=(1-xH_u^2)^{-1}(u)=\sum_{n=0}^{\infty}x^nH_u^{2n}u\\
&J(x,u):=(u,w(x,u))=\sum_{n=0}^{\infty}x^nJ_{2n+2}(u).
\end{align*}

\noi
A computation shows that
\begin{align*}
&d_uJ(x,u)(h)=\omega (h,X(x)), \text{ where }\\
&X(x)=\frac{1}{2i}\big(w(x)+xw(x)H_uw(x)\big).
\end{align*}

\noi
Identifying the coefficients of $x^n$, we obtain the desired formula for $X_{J_{2n}}(u)$.
For the second part of the proposition, we use
\[w(x)=u+xH_u^2(w)\]

\noi
and the above lemma to obtain
\begin{align*}
H_{iX_{J(x,u)}}(h)&=\frac{1}{2}H_{w}(h)+\frac{x}{2}H_{wH_u(w)}(h)=\frac{1}{2}H_{w}(h)
+\frac{x}{2}H_{uH_u(w)}(h)+\frac{x^2}{2}H_{H_u^2(w)H_u(w)}(h)\\
&=G_uH_u(h)+H_uD_u(h),
\end{align*}

\noi
where
\begin{align*}
G_u(h)&=\frac{x}{2}w\Pi (\bar{w}h)\\
D_u(h)&=\frac{1}{2}\sum_{n=0}^{\infty}x^nhH_u^{2n-1}(u)+\frac{x}{2}H_hH_u(w)+\frac{x^2}{2}H_u(w)\Pi (\cj{H_u(w)})
\end{align*}

\noi
Since, by \eqref{sym H_u}, $H_{iX_{J(x,u)}}(h)=\frac{1}{2}H_{w}(h)+\frac{x}{2}H_{wH_u(w)}(h)$ satisfies
\begin{align*}
(H_{iX_{J(x,u)}}(h_1),h_2)=(H_{iX_{J(x,u)}}(h_2),h_1)
\end{align*}

\noi
for all $h_1,h_2\in L^2_+$, we have that
\begin{align*}
H_{iX_{J(x,u)}}(h)=G_uH_u(h)+H_uD_u(h)=H_uG_u(h)+D_uH_u(h)=C_uH_u+H_uC_u,
\end{align*}

\noi
where $C_u=\frac{G_u+D_u}{2}$. Identifying once more the coefficients of $x^n$ and using
the fact that $H_u$ is a skew-symmetric operator, we obtain the formula for $H_{X_{J_{2n}}}(u)$.
\end{proof}

As in \cite{PGSG}, the following result holds:
\begin{theorem}\label{thm: complete vector fields}
For every $u_0\in H^s_+$, $s>1$, there exists a unique solution $u\in C(\R,H^s_+)$
of the Cauchy problem
\begin{equation}
\begin{cases}\label{eqn:X_J_2n}
\dt u=X_{J_{2n}}(u)\\
u(0)=u_0.
\end{cases}
\end{equation}

\noi
Moreover, $u$ satisfies
\begin{align}\label{ dt H_u}
\dt H_u=[B_{u,n},H_u]
\end{align}

\noi
and
\begin{align}\label{eqn: involution}
\{J_{2n},J_{2k}\}=0,
\end{align}

\noi
for all $k\in\N^{\ast}$.
\end{theorem}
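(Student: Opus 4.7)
The plan is to transpose the argument used for the cubic Szeg\"o equation (the $J_4$-flow) in \cite{pocov} and its analogue on $\T$ in \cite{PGSG}, both of which hinge on the Lax pair identity \eqref{eqn: H_X_J_2n}. The proof splits into local existence, the Lax evolution, global continuation via conservation laws, and the involution relations.

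For local well-posedness in $H^s_+$ with $s>1$, I would verify that $u\mapsto X_{J_{2n}}(u)$ is locally Lipschitz on $H^s_+$. The facts that $H^s_+$ is a Banach algebra for $s>\tfrac{1}{2}$, that $\Pi$ is bounded on $H^s(\R)$, and that the Hankel operator satisfies the tame bound $\|H_u h\|_{H^s_+}\lesssim\|u\|_{H^s_+}\|h\|_{H^s_+}$ together reduce this to finitely many algebra estimates on products of iterated Hankel images. A Picard iteration then produces a unique maximal solution $u\in C([0,T^{\ast}),H^s_+)$, on which the Lax identity \eqref{eqn: H_X_J_2n} yields directly
\[
\dt H_{u(t)}=H_{\dt u(t)}=H_{X_{J_{2n}}(u(t))}=[B_{u(t),n},H_{u(t)}],
\]
which is \eqref{ dt H_u}.

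A short computation, exactly analogous to the one for $B_u$ in \cite{pocov}, shows that $B_{u,n}$ is skew-adjoint on $L^2_+$: adjunction of the summand $H_u^j(g)\,\Pi(\cj{H_u^{2n-2-j}(g)}\,\cdot\,)$ exchanges the indices $j$ and $2n-2-j$, and the prefactor $-i$ flips sign. Hence the Cauchy problem $\dt V(t)=B_{u(t),n}V(t)$, $V(0)=I$, defines a unitary propagator, and the classical Lax argument yields $H_{u(t)}=V(t)H_{u_0}V(t)^{\ast}$, so the spectrum of $H_{u_0}^2$ with multiplicities is preserved. A computation along the lines of the derivation of \eqref{eqn:Uastu} then identifies $V^{\ast}(t)u(t)$ with the action on $u_0$ of a unitary operator that is a function of $H_{u_0}^2$, hence commuting with each $H_{u_0}^{2k-2}$; this yields $J_{2k}(u(t))=(V^{\ast}u(t),H_{u_0}^{2k-2}V^{\ast}u(t))=J_{2k}(u_0)$, so every $J_{2k}$ is conserved along the $J_{2n}$-flow. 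In particular $\|u\|_{L^2}^2=J_2$ and $\|u\|_{\dot H^{1/2}_+}^2=2\pi\,\textup{Tr}(H_u^2)$ are preserved, so the full $H^{1/2}_+$-norm is conserved along the flow.

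Global continuation follows from a Brezis--G\"alouet-type logarithmic estimate $\|u\|_{L^{\infty}}\leq C\|u\|_{H^{1/2}_+}\bigl(\log(2+\|u\|_{H^s_+}/\|u\|_{H^{1/2}_+})\bigr)^{1/2}$, combined with an $H^s_+$-energy inequality $\tfrac{d}{dt}\|u\|_{H^s_+}^2\leq C_n(1+\|u\|_{L^{\infty}})\|u\|_{H^s_+}^2$ obtained by commuting the Bessel potential $D^s=(I-\Delta)^{s/2}$ through $X_{J_{2n}}$, and Gronwall; this precludes finite-time blow-up and extends the solution to all of $\R$. The involution \eqref{eqn: involution} is then immediate: $J_{2k}(u(t))$ being constant along the $J_{2n}$-flow gives
\[
0=\tfrac{d}{dt}\bigg|_{t=0}J_{2k}(u(t))=d_{u_0}J_{2k}\bigl(X_{J_{2n}}(u_0)\bigr)=\{J_{2n},J_{2k}\}(u_0).
\]
The main obstacle is the $H^s_+$-energy estimate closing the global continuation: $B_{u,n}$ is a sum of $2n-1$ terms each involving products of iterated Hankel images, so $[D^s,B_{u,n}]$ generates many pieces that must be controlled uniformly in $t$ via Kato--Ponce-type commutator inequalities and the uniform $H^{1/2}_+$-control on each factor $H_u^j(g)$ afforded by the spectral conservation laws (Remark \ref{conservation laws}). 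This is the only step that is strictly more involved than its counterpart in \cite{pocov}.
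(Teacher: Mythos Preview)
The paper itself gives no proof here beyond the sentence ``As in \cite{PGSG}, the following result holds'', so there is no original argument to compare against; your outline is a correct expansion of what that reference entails and follows the scheme of \cite{PGSG} and \cite{pocov} (local well-posedness by Picard iteration in the Banach algebra $H^s_+$, the Lax identity \eqref{eqn: H_X_J_2n} giving \eqref{ dt H_u}, skew-adjointness of $B_{u,n}$ yielding a unitary propagator, conservation of each $J_{2k}$ via the analogue of \eqref{eqn:Uastu}, and global continuation by Brezis--G\^alouet plus Gronwall). One small correction: since $X_{J_{2n}}$ has degree $2n-1$ in $u$, the energy inequality should read $\tfrac{d}{dt}\|u\|_{H^s_+}^2\le C_n\|u\|_{L^\infty}^{2n-2}\|u\|_{H^s_+}^2$ rather than being linear in $\|u\|_{L^\infty}$; this is harmless, because via the Brezis--G\^alouet estimate and the conserved $H^{1/2}_+$-norm the right-hand side is still bounded by a fixed power of $\log(2+\|u\|_{H^s_+})$ times $\|u\|_{H^s_+}^2$, and Gronwall closes exactly as in \cite{pocov}.
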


In what follows we compute $g'(t)$ and the commutator $[T,B_{u,n}]$
and use this result to determine the evolution of the
angles and generalized angles along the flow of $X_{J_{2n}}$.

\begin{lemma}
Let $u\in C(\R,H^s_+)$, $s>1$ be a solution of \eqref{eqn:X_J_2n} and for all $t\in\R$ let
$g(t)\in \textup{Ran} (H_{u(t)})$ be such that $H_{u(t)}g(t)=u(t)$. Then
\begin{align}\label{eqn: g'}
g'(t)=\frac{i}{4}H_u^{2n-2}(g)+P_uB_{u,n}(g).
\end{align}

\noi
Moreover,
\begin{align*}
[T,B_{u,n}]h=\frac{1}{8\pi}\sum_{j=1}^{2n-2}H_u^j(g)\big(h,H_u^{2n-2-j}(g)\big)+\frac{1}{8\pi} \big(h,H_u^{2n-2}(g)\big)g-\Ld (h)B_{u,n}(g).
\end{align*}

\end{lemma}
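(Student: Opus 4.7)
The plan is to establish the two identities separately. For the first, we differentiate the defining relation $u = H_u g$ in $t$ and invoke the Lax equation \eqref{ dt H_u}:
\[
u_t \;=\; (\partial_t H_u)\,g + H_u g_t \;=\; B_{u,n}(u) - H_u B_{u,n}(g) + H_u g_t,
\]
where we used $[B_{u,n}, H_u]\,g = B_{u,n}(u) - H_u B_{u,n}(g)$. Since $u_t = X_{J_{2n}}(u)$, this rearranges to $H_u\bigl(g_t - B_{u,n}(g)\bigr) = X_{J_{2n}}(u) - B_{u,n}(u)$. We then expand $B_{u,n}(u) = -\tfrac{i}{4}\sum_{j=0}^{2n-2} H_u^j(g)\,H_u^{2n-1-j}(g)$ using $\Pi\bigl(\overline{H_u^{2n-2-j}(g)}\,u\bigr) = H_u^{2n-1-j}(g)$, and compare with the explicit expression for $X_{J_{2n}}(u)$. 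After pairing indices according to parity and performing a telescoping cancellation, the right-hand side should collapse to $\tfrac{i}{4} H_u^{2n-1}(g)$. Consequently $H_u\bigl(g_t - B_{u,n}(g) - \tfrac{i}{4} H_u^{2n-2}(g)\bigr) = 0$, and projecting with $P_u$ (noting that $g_t$ lies in $\textup{Ran}(H_{u(t)})$ since $g$ evolves inside this range) yields the first formula.

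For the commutator, we would expand directly using $T(f) = xf - \Lambda(f) b_u$. The essential input is the identity $\Pi(xf) = x\Pi(f) + \tfrac{1}{2\pi i}\int f$ from \eqref{eqn:basic xf}, which gives
\[
\Pi\bigl(\overline{H_u^{2n-2-j}(g)}\cdot xh\bigr) \;=\; x\,\Pi\bigl(\overline{H_u^{2n-2-j}(g)}\,h\bigr) \;+\; \frac{1}{2\pi i}\bigl(h,\,H_u^{2n-2-j}(g)\bigr),
\]
so that
\[
B_{u,n}(xh) \;=\; x\,B_{u,n}(h) \;-\; \frac{1}{8\pi}\sum_{j=0}^{2n-2} H_u^j(g)\,\bigl(h,\,H_u^{2n-2-j}(g)\bigr).
\]
Assembling $T(B_{u,n}h) - B_{u,n}(Th)$, the $x\,B_{u,n}(h)$ pieces cancel, leaving the sum above together with the two residual terms $-\Lambda(B_{u,n}h)\,b_u$ and $\Lambda(h)\,B_{u,n}(b_u)$. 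We then observe that $\Lambda(B_{u,n}h) = 0$: each summand $H_u^j(g)\,\Pi(\overline{H_u^{2n-2-j}(g)}\,h)$ is a product of two $L^2_+$ rational functions decaying faster than $1/x$ at infinity, so its leading $1/x$ coefficient in a Laurent expansion vanishes. Moreover, $B_{u,n}(1) = 0$, since $\Pi$ annihilates the conjugates of $L^2_+$ functions; combined with $b_u = 1 - g$ this gives $B_{u,n}(b_u) = -B_{u,n}(g)$, so $\Lambda(h)\,B_{u,n}(b_u) = -\Lambda(h)\,B_{u,n}(g)$. Finally, splitting off the $j = 0$ term $\tfrac{1}{8\pi}(h, H_u^{2n-2}(g))\,g$ from the remaining sum over $j = 1, \dots, 2n-2$ yields the three pieces of the claimed expression.

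The main obstacle will be the parity-based combinatorial identity in the first part: the sum defining $X_{J_{2n}}(u)$ has only $n$ terms while the one in $B_{u,n}(u)$ has $2n-1$ terms, and the cancellation between even-indexed and odd-indexed products must be tracked carefully in order to isolate the single leftover $\tfrac{i}{4}H_u^{2n-1}(g)$. The second part is essentially bookkeeping, though one should verify that $B_{u,n}(h)$ takes values in $T_{|u|^2}\bigl(\textup{Ran}(H_u)\bigr)$, the extended space on which $T$ and $\Lambda$ have already been defined, so that $T\,B_{u,n}(h)$ and $\Lambda(B_{u,n}h)$ are meaningful.
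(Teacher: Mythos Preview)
Your proposal is correct and follows essentially the same route as the paper: for \eqref{eqn: g'} you differentiate $u=H_u g$, use the Lax relation $\partial_t H_u=[B_{u,n},H_u]$, and cancel terms by parity; for the commutator you expand $T(B_{u,n}h)-B_{u,n}(Th)$ directly via $Tf=xf-\Lambda(f)b_u$ and \eqref{eqn:basic xf}. Two small points to tidy up: first, since $H_u$ is $\C$-antilinear one has $H_u\bigl(\tfrac{i}{4}H_u^{2n-2}(g)\bigr)=-\tfrac{i}{4}H_u^{2n-1}(g)$, so the sign you quote for the collapsed right-hand side should be $-\tfrac{i}{4}H_u^{2n-1}(g)$ (your subsequent equation $H_u(g_t-B_{u,n}(g)-\tfrac{i}{4}H_u^{2n-2}(g))=0$ is the correct one); second, in your $\Lambda(B_{u,n}h)=0$ argument the individual factors decay like $1/x$, not faster---it is their \emph{product} that is $O(1/x^2)$, which is exactly what forces $\Lambda$ to vanish.
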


\begin{proof}
In order to compute $g'(t)$, we differentiate with respect to time the equality $H_u(g)=u$:
\begin{align*}
[B_{u,n},H_u]g+H_u(g')=X_{J_{2n}}(u).
\end{align*}

\noi
Thus
\begin{align*}
H_u(g')=&X_{J_{2n}}(u)-[B_{u,n},H_u]g\\
=&-\frac{i}{2}\sum_{k=0}^{n-1}H_u^{2n-2k-1}(g)H_u^{2k}(g)+\frac{i}{4}\sum_{j=0}^{2n-2}H_u^{j}(g)\Pi (\cj{H_u^{2n-2-j}(g)}u)+H_uB_{u,n}(g)\\
=&-\frac{i}{2}\sum_{k=0}^{n-1}H_u^{2n-2k-1}(g)H_u^{2k}(g)+\frac{i}{4}\sum_{j=0}^{n-1}H_u^{2j}(g)H_u^{2n-1-2j}(g)\\
&+\frac{i}{4}\sum_{j=1}^{n-1}H_u^{2n-2j-1}(g)H_u^{2j}(g)+H_uB_{u,n}(g)\\
=&-\frac{i}{4}H_u^{2n-1}(g)+H_uB_{u,n}(g).
\end{align*}

\noi
Using the fact that $H_u$ is a skew-symmetric operator and is onto on its range, we obtain \eqref{eqn: g'}.

Since the product of two rational functions has $\Ld$ equal to zero, we notice that
\begin{align*}
\Ld (B_{u,n}h)=-\frac{i}{4}\sum_{j=0}^{2n-2}\Ld \Big(H_u^j(g)\Pi \Big(\cj{H_u^{2n-2-j}(g)}h\Big)\Big)=-\frac{i}{4}\Ld \Big(\Pi \big(\cj{H_u^{2n-2}(g)}h\big)\Big)
\end{align*}

\noi
A similar computation yields the second equation in the statement.
\end{proof}

\begin{proposition}\label{prop: phi_j,c_j}
If $u_0\in H^s_+$, $s>1$ and $u_0\in\M(N)_\textup{gen}$, then the solution $u(t)$ of the
equation \eqref{eqn:X_J_2n} is contained in the toroidal cylinder $TC(u_0)$ defined by \eqref{TC(u)},
for all $t\in\R$. Moreover, the angles $\phi_j$ and the generalized angles
$\gamma_j$ evolve along the flow of this equation as follows:
\begin{align*}
&\{J_{2n},\phi_j\}=\frac{d}{dt}\phi_j=\frac{\ld_j^{2n-2}}{4}\\
&\{J_{2n},\gamma_j\}=\frac{d}{dt}\gamma_j=\frac{n-1}{4\pi}\ld_j^{2n-2}\nu_j^2.
\end{align*}

\end{proposition}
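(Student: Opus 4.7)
The plan is to extend the Lax-pair machinery used in Section~2 for the Szegő equation itself (the case $n=2$, up to the time-rescaling $E = 2J_4$) to the general flow $\dt u = X_{J_{2n}}(u)$. By Theorem~\ref{thm: complete vector fields}, $\dt H_u = [B_{u,n}, H_u]$; since $B_{u,n}$ is skew-adjoint (as required for the Lax pair to preserve the spectrum), the propagator $U_n(t)$ solving $U_n'(t) = B_{u(t),n} U_n(t)$, $U_n(0)=I$, is unitary and $H_{u(t)} = U_n(t) H_{u_0} U_n(t)^*$. In particular the eigenvalues $\ld_j^2$ are constant in time, the manifolds $\M(N)_{\textup{gen}}$ are preserved by the argument of Proposition~\ref{PROP:invariant}, and $e_j(t) := U_n(t) e_j(0)$ is an orthonormal eigenbasis of $\textup{Ran}(H_{u(t)})$ with $H_{u(t)} e_j(t) = \ld_j e_j(t)$.

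The conservation of $\nu_j = |(g,e_j)|$ (whence $u(t) \in TC(u_0)$) and the formula for $\dt \phi_j$ come from a single computation. Using $\dt e_j = B_{u,n} e_j$, skew-adjointness of $B_{u,n}$, and \eqref{eqn: g'},
\begin{align*}
\frac{d}{dt}(g, e_j) = (g' - B_{u,n} g, e_j) = \Big(\tfrac{i}{4} H_u^{2n-2}(g) + (P_u - I) B_{u,n}(g), e_j\Big) = \tfrac{i \ld_j^{2n-2}}{4}(g, e_j),
\end{align*}
where the $(P_u - I)$ term drops against $e_j \in \textup{Ran}(H_u)$, and $H_u^{2n-2}$ is self-adjoint with eigenvalue $\ld_j^{2n-2}$ on $e_j$. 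Integration gives $(g(t), e_j(t)) = e^{it\ld_j^{2n-2}/4}(g_0, e_j(0))$; hence $\dt \nu_j = 0$ and $\dt \phi_j = \ld_j^{2n-2}/4$.

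For $\gamma_j(t) = \textup{Re}(T(t) e_j(t), e_j(t))$, observe that for fixed $h$ the operator $T(t)h = xh - \Ld(h) b_{u(t)}$ depends on $t$ only through $b_{u(t)} = 1 - g(t)$, so $(\dt T)(h) = \Ld(h) g'(t)$. Combining with $\dt e_j = B_{u,n} e_j$ and $B_{u,n}^* = -B_{u,n}$,
\begin{align*}
\frac{d}{dt}(T e_j, e_j) = \Ld(e_j)(g', e_j) + ([T, B_{u,n}] e_j, e_j).
\end{align*}
I would then substitute $(g', e_j) = \tfrac{i\ld_j^{2n-2}}{4}\beta_j + (B_{u,n}(g), e_j)$ with $\beta_j = (g, e_j)$, the identity $\Ld(e_j) = -\tfrac{1}{2\pi i}(e_j, g)$ from \eqref{Ld(f)}, and the explicit formula for $[T, B_{u,n}]$ stated just above the proposition.

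The last step is bookkeeping. Since $H_u^k e_j = \ld_j^k e_j$ for every $k$ (using $\ld_j \in \R_+$), the $\C$-antilinearity of $H_u$ and the symmetry \eqref{sym H_u} yield $(H_u^k g, e_j) = \ld_j^k \beta_j$ for $k$ even and $\ld_j^k \bar\beta_j$ for $k$ odd. Consequently each of the $2n-2$ products $(H_u^k g, e_j)(e_j, H_u^{2n-2-k} g)$ and the extra term $(e_j, H_u^{2n-2} g)(g, e_j)$ collapses to $\ld_j^{2n-2}\nu_j^2$, contributing $\tfrac{(2n-1)\ld_j^{2n-2}\nu_j^2}{8\pi}$ from the commutator; the two $(B_{u,n}(g), e_j)$ pieces cancel; and $\Ld(e_j) \cdot \tfrac{i\ld_j^{2n-2}}{4}\beta_j = -\tfrac{\ld_j^{2n-2}\nu_j^2}{8\pi}$ via $\Ld(e_j)\beta_j = i\nu_j^2/(2\pi)$. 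The net is already real and equals $\tfrac{(n-1)\ld_j^{2n-2}\nu_j^2}{4\pi}$, giving $\dt \gamma_j$. The identities $\{J_{2n}, \phi_j\} = \dt \phi_j$ and $\{J_{2n}, \gamma_j\} = \dt \gamma_j$ are automatic from $\{F, G\}(u) = d_u G(X_F(u))$ along the flow of $X_{J_{2n}}$. The main obstacle is this parity-bookkeeping: one must verify that the antilinear action of odd powers of $H_u$ conspires so that every one of the $2n-2$ pairs in the commutator sum contributes exactly the same real quantity $\ld_j^{2n-2}\nu_j^2$.
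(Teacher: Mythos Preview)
Your proof is correct and follows essentially the same route as the paper: both use the unitary propagator $U_n(t)$ for the Lax pair, compute $\frac{d}{dt}(g,e_j)$ via \eqref{eqn: g'} and skew-adjointness of $B_{u,n}$, and then differentiate $(Te_j,e_j)$ using the commutator formula for $[T,B_{u,n}]$. The only cosmetic difference is that you obtain $(\partial_t T)h=\Lambda(h)g'$ by directly differentiating $T(t)h=xh-\Lambda(h)(1-g(t))$, whereas the paper reaches the analogous expression through the $\lim_{\eps\to 0}(e_j,\tfrac{1}{1-i\eps x})$ regularization; in the rational setting your shortcut is legitimate and leads to the identical bookkeeping, with the $B_{u,n}(g)$ terms canceling and the parity analysis of $(H_u^k g,e_j)$ giving $(2n-2)$ equal contributions.
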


\begin{proof}
Since the evolution equation \eqref{eqn:X_J_2n} admits the Lax pair \eqref{ dt H_u},
the classical theory yields that if $u(t)$ is a solution of \eqref{eqn:X_J_2n}, then
\begin{align*}
H_{u(0)}=U_n(t)^{\ast}H_{u(t)}U_n(t),
\end{align*}

\noi
where $U_n(t)$ is a unitary operator on $L^2_+$ satisfying
\begin{align*}
\frac{d}{dt}U_n(t)=B_{u,n}U_n,\,\,\,\,\,\,\,\,U(0)=I.
\end{align*}

\noi
Therefore, the eigenvalues $\ld_j^2$, $j=1,2,\dots,N$ of $H^2_{u(t)}$ are conserved in time.
Moreover, if we denote by $\{e_j(0)\}_{j=0}^N$ an orthonormal basis of $\textup{Ran}(H_{u_0})$
such that $H_{u_0}e_j(0)=\ld_j e_j(0)$, then $e_j(t)=U_n(t)e_j(0)$ form a basis of
$\textup{Ran} (H_{u(t)})$ such that $H_{u(t)}e_j(t)=\ld_j e_j(t)$.
Then, by \eqref{eqn: g'} and using the fact that $B_{u,n}$ is a skew-symmetric operator, we have
\begin{align*}
\frac{d}{dt}\big(e_j(t),g(t)\big)&=\big(B_{u,n}e_j(t),g(t)\big)-\frac{i}{4}\big(e_j(t),H_u^{2n-2}(g(t))\big)
+\big(e_j(t),B_{u,n}(g(t))\big)\\
&=-\frac{i}{4}\ld_j^{2n-2}\big(e_j(t),g(t)\big).
\end{align*}

\noi
Therefore $\big(e_j(t),g(t)\big)=e^{-i\phi_j(t)}\big(e_j(0),g_0\big)$, with $\frac{d}{dt}\phi_j=\frac{\ld_j^{2n-2}}{4}$.
Thus $\big|\big(e_j(t),g(t)\big)\big|=\big|\big(e_j(0),g_0\big)\big|$ and $u(t)\in TC(u_0)$ for all $t\in\R$.

By Lemma \ref{lemma: Ld(H_uf)} and equation \eqref{eqn:eps}, we have that
$\Ld (H_uh)=-\frac{1}{2\pi i}(u,h)=-\frac{1}{2\pi i}\lim_{\eps\to 0}\Big(H_uh,\frac{1}{1-i\eps x}\Big)$. Then
\begin{align*}
\frac{d}{dt}\gamma_j(t)=&\frac{d}{dt}\Big(T(t)e_j(t),e_j(t)\Big)
=\lim_{\eps\to 0}\frac{d}{dt}\Big(xe_j(t)-\frac{1}{2\pi i}\big(e_j(t),\frac{1}{1-i\eps x}\big)(g(t)-1),e_j(t)\Big)\\
=&\lim_{\eps\to 0}\Big(xB_{u,n}e_j(t)-\frac{1}{2\pi i}\big(B_{u,n}e_j(t),\frac{1}{1-i\eps x}\big)(g(t)-1),e_j(t)\Big)\\
&-\frac{1}{2\pi i}\lim_{\eps\to 0}(e_j(t),\frac{1}{1-i\eps x})\Big(\frac{i}{4}H_{u}^{2n-2}(g)+B_{u,n}(g),e_j(t)\Big)+\Big(T(t)e_j(t),B_{u,n}e_j(t)\Big)\\
&=\Big([T,B_{u,n}]e_j(t),e_j(t)\Big)+\Ld(e_j(t))\Big(\frac{i}{4}H_u^{2n-2}(g)+B_{u,n}(g),e_j(t)\Big)\\
&=\frac{1}{8\pi}\sum_{k=1}^{2n-2}\Big(e_j(t),H_u^{2n-2-k}(g)\Big)\Big(H_u^k(g),e_j(t)\Big)+\frac{1}{8\pi}\Big(e_j(t),H_u^{2n-2}(g)\Big)\big(g,e_j(t)\big)\\
&-\Ld (e_j(t))\Big(B_{u,n}(g),e_j(t)\Big)+\frac{i}{4}\Ld (e_j(t))\Big(H_u^{2n-2}(g),e_j(t)\Big)+\Ld (e_j(t))\Big(B_{u,n}(g),e_j(t)\Big).
\end{align*}

\noi
Writing $e_j(t)=H_{u(t)}f_j(t)\in\textup{Ran} (H_{u(t)})$, we have
\begin{align*}
\Ld (e_j(t))&=\Ld \big(H_{u(t)}f_j(t)\big)=-\frac{1}{2\pi i}\big(u(t),f_j(t)\big)=-\frac{1}{2\pi i}\big(H_{u(t)}g(t),f_j(t)\big)\\
&=-\frac{1}{2\pi i}\big(H_{u(t)}f_j(t),g(t)\big)
=-\frac{1}{2\pi i}\big(e_j(t),g(t)\big)\\
&=-\frac{1}{2\pi i}e^{-i\phi_j(t)}\big(e_j(0),g_0\big)=-\frac{1}{2\pi i}\big(e_j(t),g(t)\big).
\end{align*}

\noi
Then
\begin{align*}
\frac{d}{dt}\gamma_j(t)=\frac{1}{8\pi}\sum_{k=1}^{2n-2}\ld_j^{2n-2}\nu_j^2+\frac{1}{8\pi}\ld_j^{2n-2}\nu_j^2
-\frac{1}{8\pi}\ld_j^{2n-2}\nu_j^2=\frac{n-1}{4\pi}\ld_j^{2n-2}\nu_j^2.
\end{align*}
\end{proof}

\begin{proposition}\label{prop:one to one}
If $u\in\M(N)_{\textup{gen}}$, then
\begin{align*}
u(x)=\frac{i}{2\pi}\sum_{j,k=1}^N\ld_j\nu_j\nu_k e^{2i\phi_j}\cj{(T-xI)_{jk}^{-1}},
\end{align*}

\noi
where

\begin{align}\label{eqn:T}
Te_j=\sum_{k\neq j}\frac{\ld_j\nu_j\nu_k}{2\pi i}\cdot\frac{\ld_j
-\ld_k e^{i(2\phi_j-2\phi_k)}}{\ld_k^2-\ld_j^2}e_k+(\gamma_j+i\frac{\nu_j^2}{4\pi})e_j,
\end{align}

\noi
for all $j\in\{1,2,\dots,N\}$. In particular, $\chi$ is a one to one map.

\end{proposition}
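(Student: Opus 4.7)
The plan is to specialize the explicit formula of Theorem \ref{thm:general formula} to time $t=0$ and then rewrite everything in terms of the action-angle data $(\ld_j, \nu_j, \phi_j, \gamma_j)$. Since $U(0) = W(0) = I$ and $S(0) = T$, that theorem reduces at $t=0$ to
\[
u_0(x) = \frac{i}{2\pi}\bigl(u_0,\,(T-xI)^{-1}g_0\bigr),
\]
so the whole task is to read off the matrix of $T$ and the components of $u_0, g_0$ in the orthonormal basis $\{e_j\}$.

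First I would compute $(Te_j, e_k)$ for $k \neq j$. Applying the commutator identity \eqref{eqn:comutator H^2} to $e_j$ and using the identities $\int|u_0|^2 e_j = \ld_j^2\bar{\beta}_j$ (which follows from $H_{u_0}^2 g_0 = H_{u_0} u_0$ together with self-adjointness of $H_{u_0}^2$) and $(e_j, u_0) = \ld_j \beta_j$ gives
\[
(H_{u_0}^2 - \ld_j^2)\, Te_j \;=\; \frac{1}{2\pi i}\bigl(\ld_j^2 \bar{\beta}_j g_0 - \ld_j \beta_j u_0\bigr);
\]
taking the inner product against $e_k$, using genericity to divide by $\ld_k^2 - \ld_j^2 \neq 0$, and substituting $\beta_j = \nu_j e^{i\phi_j}$ yields the off-diagonal part of \eqref{eqn:T}. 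The diagonal entry has real part $\gamma_j$ by the very definition of the generalized angle in Theorem \ref{thm:action-angle}, while its imaginary part is $\nu_j^2/(4\pi)$ by \eqref{eqn:Imc_j(0)}.

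Next I would expand $g_0 = \sum_\ell \beta_\ell e_\ell$ and use the Hankel-symmetry identity $(u_0, e_k) = \ld_k \bar{\beta}_k$ (from \eqref{sym H_u} applied to $u_0 = H_{u_0} g_0$) to rewrite the pairing $(u_0, (T-xI)^{-1}g_0)$ as a double sum whose coefficients are products of the $\ld_j$, the moduli $\nu_j$, and suitable phases, paired with matrix entries of $(T-xI)^{-1}$ in the basis $\{e_j\}$. Collecting and rearranging then yields the announced closed form for $u(x)$. Injectivity of $\chi$ is an immediate consequence: \eqref{eqn:T} reconstructs $T$ from the action-angle data, and the explicit formula reconstructs $u_0$ from $T$ and $(\ld_j, \nu_j, \phi_j)$, so two elements of $\M(N)_{\textup{gen}}$ with equal image under $\chi$ must coincide.

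The main obstacle will be the careful tracking of the phases $e^{i\phi_j}$ throughout. The orthonormal basis $\{e_j\}$ satisfying $H_{u_0}e_j = \ld_j e_j$ is only determined up to overall signs $e_j \mapsto \pm e_j$ (because $H_{u_0}$ is antilinear), which shift each $\phi_j$ by $\pi$ but leave $2\phi_j$ invariant; moreover the diagonal of $T$ enters via $\Ld(e_j) = i\bar{\beta}_j/(2\pi)$, coupling $\phi_j$ into $T$ itself. Ensuring that the final expression depends only on the invariant combinations $(\ld_j, \nu_j, 2\phi_j, \gamma_j)$ and matches the symmetric form announced in the proposition requires bookkeeping consistent with these sign ambiguities.
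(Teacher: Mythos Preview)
Your proposal is correct and follows essentially the same route as the paper: specialize the explicit formula to $t=0$, compute the matrix entries of $T$ in an orthonormal eigenbasis via the commutator \eqref{eqn:comutator H^2} and \eqref{eqn:Imc_j(0)}, and expand the inner product. The one point where the paper is more explicit than your outline is the resolution of the phase-tracking obstacle you flag at the end: rather than carrying the individual phases $e^{i\phi_j}$ through and checking invariance a posteriori, the paper passes to the rotated orthonormal basis $\tilde e_j=e^{i\phi_j}e_j$, in which $(g_0,\tilde e_j)=\nu_j$ is real and the off-diagonal entries of $T$ automatically involve only $e^{i(2\phi_j-2\phi_k)}$, so the formula \eqref{eqn:T} and the expression for $u(x)$ come out directly in terms of $2\phi_j$.
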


\begin{proof}
The proof follows the same lines as the proof of Theorem \ref{thm:general formula}.
The only difference is that we work with the orthonormal basis $\tilde{e}_j=e^{i\phi_j}e_j$.
Since $H_u$ is anti-linear,
the orhonormal basis $\{e_j\}_{j=1}^N$ satisfying $H_ue_j=\ld_je_j$ is determined only
modulo the sign of $e_j$. Therefore, $\phi_j=\arg(e_j,u_0)$ is determined modulo $\pi$.
We intend to introduce generalized action-angle coordinates,
and the angles should be defined
modulo $2\pi$. Considering the basis $\tilde{e}_j$, the formulas we obtain only depend on
$2\phi_j$, which are therefore good candidates for the angles.
\end{proof}

\begin{proof}[Proof of Theorem \ref{thm:action-angle}]

Let us first notice that, if we prove that $\chi$ is a symplectic diffeomorphism, then
the coordinates $(2\ld_j^2\nu_j^2, 4\pi\ld_j^2, 2\phi_j, \gamma_j)$ are canonical.
Denote $I_j=2\ld_j^2\nu_j^2$. By equation $E=2J_4=2\sum_{j=1}^N\ld_j^4\nu_j^2=\sum_{j=1}^N\ld_j^2 I_j$
and using Proposition \ref{prop: phi_j,c_j}, we obtain that for the flow of the Szeg\"o equation we have:
\begin{align*}
&\frac{d}{dt}(2\phi_j(t))=\{E,2\phi_j\}=4\{J_4,\phi_j\}=\ld_j^2\\
&\frac{d}{dt}\gamma_j(t)=\{E,\gamma_j\}=2\{J_4,\gamma_j\}=\frac{\ld_j^2\nu_j^2}{2\pi}.
\end{align*}

\noi
Thus, the Szeg\"o equation can be indeed rewritten as
\begin{align*}
\begin{cases}
&\frac{d}{dt}I_j=0\\
&\frac{d}{dt}\phi_j(t)=\frac{\partial E}{\partial I_j}\\
&\frac{d}{dt}(4\pi\ld_j^2)=0\\
&\frac{d}{dt}\gamma_j(t)=\frac{\partial E}{\partial (4\pi\ld_j^2)}.
\end{cases}
\end{align*}

The first step in proving that $\chi$ is a symplectic diffeomorphism
is to compute the Poisson brackets between actions and (generalized) angles. This will lead to
$\chi$ being a local diffeomorphism.

\subsection{Poisson brackets between actions and (generalized) angles}

First notice that
\begin{equation}\label{J_2n}
J_{2n}(u)=(u,H_u^{2n-2}u)=\sum_{k=1}^N\ld_k^{2n}\nu_k^2.
\end{equation}

\noi
Fix $j\in\{1,2,\dots,N\}$. Writing
\begin{align*}
\{J_{2n},2\phi_j\}=\sum_{k=1}^N\ld_k^{2n-2}\{\ld_k^2\nu_k^2,2\phi_j\}+\sum_{k=1}^N(n-1)\ld_k^{2n-2}\nu_k^2\{\ld_k^2,2\phi_j\}
\end{align*}

\noi
for all $n=1,2,\dots,2N$ we obtain the following linear system of equations:
\begin{align*}
\sum_{k=1}^N\ld_k^{2n-2}\{\ld_k^2\nu_k^2,2\phi_j\}+\sum_{k=1}^N(n-1)\ld_k^{2n-2}\nu_k^2\{\ld_k^2,2\phi_j\}=\frac{\ld_j^{2n-2}}{2}
\end{align*}

\noi
with $2N$ unknowns, $\{\ld_k^2\nu_k^2,\phi_j\}$ and $\ld_k^2\nu_k^2\{\ld_k^2,\phi_j\}$.
The matrix of this system is invertible. Indeed, supposing by absurd that the matrix is
 not invertible, it results that the columns are linearly dependent. Therefore, there
exist numbers $a_n$, $n=1,2,\dots,2N$, not all zero, such that
\begin{align*}
&\sum_{n=1}^{2N}a_n(\ld_k^2)^{n-1}=0,\,\,\,\,\,\,\,\,\,\,\,\,\,\,\,\,\,\,\,\sum_{n=1}^{2N}a_n(n-1)(\ld_k^2)^{n-2}=0.
\end{align*}

\noi
Considering the polynomial $P(x)=\sum_{n=1}^{2N}a_nX^{n-1}$, this yields
$P(\ld_k^2)=0$, $P'(\ld_k^2)=0$. Thus, each $\ld_k^2$ is a double root of the polynomial.
 Since a polynomial of degree $2N-1$ with $2N$ roots is identically zero, we obtain a contradiction.
Therefore, the system is a Cramer system and one can easily verify that its solutions are
\begin{align}
&\{\ld_k^2\nu_k^2,2\phi_j\}=\frac{\delta_{kj}}{2}\label{1}\\
&\{\ld_k^2,2\phi_j\}=0\label{2}.
\end{align}

\noi
Similarly, computing $\{J_{2n},\gamma_j\}$, we obtain the Cramer system
\begin{align*}
\sum_{k=1}^N\ld_k^{2n-2}\{\ld_k^2\nu_k^2,\gamma_j\}+\sum_{k=1}^N(n-1)\ld_k^{2n-2}\nu_k^2\{\ld_k^2,\gamma_j\}
=\frac{n-1}{4}\ld_j^{2n-2}\nu_j^2
\end{align*}

\noi
with solutions
\begin{align}
&\{\ld_k^2\nu_k^2,\gamma_j\}=0\label{3}\\
&\{\ld_k^2,\gamma_j\}=\frac{\delta_{kj}}{4\pi}\label{4}.
\end{align}

\noi
Since $\ld_j^2$ and $\nu_j^2$ are conserved by the flow of any of the
equations in the Szeg\"o hierarchy, we have that
\begin{align*}
&\{J_{2n},\ld^2_j\}=\frac{d}{dt}\ld_j^2=0,\,\,\,\,\,\,\,\,\,\,\{J_{2n},\nu^2_j\}=\frac{d}{dt}\nu_j^2=0.
\end{align*}

\noi
Proceeding as above, we have two homogeneous Cramer systems,
whose solutions must be null. Thus, we obtain
\begin{align}
&\{\ld_k^2\nu_k^2,\ld^2_j\}=0\label{5}\\
&\{\ld_k^2,\ld^2_j\}=0\label{6}\\
&\{\ld_k^2\nu_k^2,\ld^2_j\nu_j^2\}=0\label{7}.
\end{align}

\subsection{$\chi$ is a local diffeomorphism}

The fact that $\chi$ is a local diffeomorphism
is equivalent to proving that the differentials $d\ld_j^2$, $d(\ld_j^2\nu_j^2)$, $d\phi_j$,
$d\gamma_j$, $j=1,2,\dots,N$, are linearly independent. Suppose
\begin{align*}
\sum_{j=1}^N\alpha _jd(\ld_j^2)+\beta _j d(\ld_j^2\nu_j^2)+\theta _jd\phi_j+\eta _jd\gamma_j=0.
\end{align*}

\noi
Applying this differential to the vector field $X_{\ld_k^2}$, using $df(X_g)=\{g,f\}$,
\eqref{6}, \eqref{5}, \eqref{2}, and \eqref{4}, we obtain
\begin{align*}
\sum_{j=1}^N\eta _j\frac{\delta_{kj}}{4\pi}=0
\end{align*}

\noi
and thus $\eta_k=0$, for all $k=1,2,\dots,N$.
Applying the same differential to $X_{\ld_k^2\nu_k^2}$ and using \eqref{5}, \eqref{7}, and
\eqref{1}, we obtain $\theta_k=0$ for all $k=1,2,\dots,N$. Applying the differential to
 $X_{\phi_k}$ and using \eqref{2} and \eqref{1} we have $\beta_k=0$ for all $k=1,2,\dots,N$.
Finally, applying the differential to $X_{c_k}$ and using \eqref{4} we obtain $\alpha_k=0$
for all $k=1,2,\dots,N$.
Therefore, $d\ld_j^2$, $d(\ld_j^2\nu_j^2)$, $d\phi_j$, $d\gamma_j$, $j=1,2,\dots,N$,
are linearly independent and $\chi$ is a local diffeomorphism.

\medskip

Since a bijective local diffeomorphism is a diffeomorphism, and we have by Proposition \ref{prop:one to one}
that $\chi$ is one to one, we only need to show that
$\chi$ is onto. A proper local diffeomorphism taking values in a connected manifold is onto.
Thus, it is enough to show that $\chi$ is proper.

\subsection{$\chi$ is a proper mapping}

Let $K\subset\Omega$ be a compact set. Set
\begin{align*}
&\big(I^{(p)},\tilde{I}^{(p)},2\phi^{(p)},\gamma^{(p)}\big):=\Big(2\big(\ld_j^{(p)}\big)^2\big(\nu_j^{(p)}\big)^2,4\pi\big(\ld_j^{(p)}\big)^2,2\phi_j^{(p)},\gamma_j^{(p)}\Big)_{j=1}^N\\
&(I,\tilde{I},2\phi,\gamma):=\Big(2\ld_j^2\nu_j^2, 4\pi\ld_j^2,2\phi_j,\gamma_j\Big)_{j=1}^N.
\end{align*}

Let
$(I^{(p)},\tilde{I}^{(p)},2\phi^{(p)},\gamma^{(p)}), (I,\tilde{I},2\phi,\gamma)\in K$ such that
\begin{equation}\label{convergent sequence}
\Big(I_j^{(p)},\tilde{I}_j^{(p)},2\phi_j^{(p)},\gamma_j^{(p)}\Big)\to(I,\tilde{I},2\phi,\gamma) \text{ as } p\to\infty.
\end{equation}

Consider $u_p\in\chi^{-1}\Big(I^{(p)},\tilde{I}^{(p)},2\phi^{(p)},\gamma^{(p)}\Big)$.
Then $\big(\ld_j^{(p)}\big)^2$ are the eigenvalues of $H_{u_p}^2$. By Lemma 3.5 in \cite{pocov}, which states that $H_u$ is a Hilbert-Schmidt operator
of norm $\|H_u\|_{H-S}=\frac{1}{\sqrt{2\pi}}\|u\|_{\dot{H}^{1/2}_+}$, we have that
\begin{align*}
\|u_p\|_{L^2_+}^2=&J_2(u_p)=\sum_{j=1}^N\big(\ld_j^{(p)}\big)^2(\nu_j^{(p)})^2=\frac{1}{2}\sum_{j=1}^N I_j^{(p)},\\
\|u_p\|_{\dot{H}^{1/2}_+}^2=&2\pi\|H_{u_p}\|_{H-S}^2=2\pi\textup{Tr}(H_{u_p}^2)=2\pi\sum_{j=1}^N\big(\ld_j^{(p)}\big)^2=\frac{1}{2}\sum_{j=1}^N\tilde{I}_j^{(p)}.
\end{align*}

\noi
Since $I^{(p)}\to I$ and $\tilde{I}^{(p)}\to \tilde{I}$ as $p\to\infty$, this yields that $\|u_p\|_{H^{1/2}}$ is bounded.
Consequently, there exists $u\in H^{1/2}_+$ such that $u_p\wk u$ in $H^{1/2}_+$. It follows in particular that
$u_p\to u$ in $L^2_{\textup{loc}}$. We denote by $\ld_j(u)$, $\nu_j(u)$, $\phi_j(u)$, and $\gamma_j(u)$ the
spectral data for $u$.

By Proposition \ref{prop:one to one}, we have that
\begin{align*}
u_p(x)=\frac{i}{2\pi}\sum_{j,k=1}^N\ld^{(p)}_j\nu^{(p)}_j\nu^{(p)}_k e^{2i\phi^{(p)}_j}\cj{(T^{(p)}-xI)_{jk}^{-1}},
\end{align*}

\noi
where $(T^{(p)}-xI)_{jk}^{-1}$ is a component of the matrix $(T^{(p)}-xI)^{-1}$ in the basis $\{e_j^{(p)}\}_{j=1}^N$, and

\begin{align*}
T^{(p)}e^{(p)}_j=\sum_{k\neq j}\frac{\ld^{(p)}_j\nu^{(p)}_j\nu^{(p)}_k}{2\pi i}\cdot\frac{\ld^{(p)}_j
-\ld^{(p)}_k e^{i(2\phi^{(p)}_j-2\phi^{(p)}_k)}}{(\ld^{(p)}_k)^2-(\ld^{(p)}_j)^2}e_k+\Big(\gamma^{(p)}_j+i\frac{\big(\nu^{(p)}_j\big)^2}{4\pi}\Big)e_j,
\end{align*}

\noi
for all $j\in\{1,2,\dots,N\}$. By equation \eqref{convergent sequence}, there exists $R>0$ such that
$\|T^{(p)}\|\leq \frac{R}{2}$ for all $p\in\N$. Using the Neumann series, we have that if $|x|\geq R$, then
there exists $A>0$ such that
\[\|(T^{(p)}-xI)^{-1}\|\leq \frac{A}{|x|},\]

\noi
for all $p\in\N$. This yields that
\begin{align*}
\lim_{R\to\infty}\sup_{p}\int_{|x|>R}|u_p(x)|^2dx\leq \lim_{R\to\infty}\int_{|x|>R}\frac{A^2}{|x|^2}dx=0.
\end{align*}

\noi
Since $u_p\to u$ in $L^2_{\textup{loc}}$, this triggers $u_p\to u$ in $L^2_+(\R)$.

Let us now prove that $H_{u_p}(h)\to H_u(h)$ in $L^2_+$, for all $h\in L^2_+$. First, notice that
there exists $C>0$ such that
\[\|H_{u_p}-H_u\|=\|H_{u_p-u}\|\leq \|H_{u_p-u}\|_{H-S}\leq \frac{1}{\sqrt{2\pi}}\|u_p-u\|_{\dot{H}^{1/2}_+}\leq C.\]

\noi
In particular, it follows that it suffices to prove
that $H_{u_p}(h)\to H_u(h)$ for $h$ in a dense subset of $L^2_+$,
for example $h\in L^{\infty}\cap L^2_+$.
For such $h$, we have that
\[\|H_{u_p}(h)-H_u(h)\|_{L^2_+}=\|H_{u_p-u}(h)\|_{L^2_+}=\|\Pi\big((u_p-u)\bar{h}\big)\|_{L^2}\leq \|u_p-u\|_{L^2_+}\|h\|_{L^{\infty}}\]

\noi
Thus, $u_p\to u$ in $L^2(\R)$ yields $H_{u_p}(h)\to H_u(h)$ in $L^2_+$.

As a consequence, we have that $J_{2n}(u_p)\to J_{2n}(u)$ as $p\to\infty$. Indeed, we write
\begin{align*}
J_{2n}(u_p)-J_{2n}(u)=&(H_{u_p}^{2n-2}u_p,u_p)-(H_u^{2n-2}u,u)\\
=&(H_{u_p}^{2n-2}u_p,u_p-u)+(H_{u_p}^{2n-2}(u_p-u),u)\\
&+\sum_{j=1}^{2n-2}(H_{u_p}^{2n-2-j}H_{u_p-u}H_u^{j-1}u,u).
\end{align*}

\noi
For the first term we notice that
\begin{align*}
|(H_{u_p}^{2n-2}u_p,u_p-u)|&\leq \|H_{u_p}^{2n-2}u_p\|_{L^2}\|u_p-u\|_{L^2}
\leq \|H_{u_p}\|^{2n-2}\|u_p\|_{L^2_+}\|u_p-u\|_{L^2_+}\\
&\leq C\|u_p\|_{\dot{H}^{1/2}_+}^{2n-2}\|u_p\|_{L^2_+}\|u_p-u\|_{L^2_+}\to 0
\text{ as } p\to\infty.
\end{align*}

\noi
For the second term we have that
\begin{align*}
|(H_{u_p}^{2n-2}(u_p-u),u)|&\leq \|H_{u_p}^{2n-2}(u_p-u)\|_{L^2_+}\|u\|_{L^2_+}
\leq \|H_{u_p}\|^{2n-2}\|u_p-u\|_{L^2_+}\|u\|_{L^2_+}\\
&\leq \|u_p\|_{\dot{H}^{1/2}_+}^{2n-2}\|u_p-u\|_{L^2_+}\|u\|_{L^2_+}\to 0
\text{ as } p\to\infty.
\end{align*}

\noi
For the other terms, in the case when $j$ is even, we use the self-adjointness of the operator
$H_u^2$. We then obtain:
\begin{align*}
|(H_{u_p}^{2n-2-j}H_{u_p-u}H_u^{j-1}u,u)|=&|(H_{u_p-u}H_u^{j-1}u,H_{u_p}^{2n-2-j}u)|\\
&\leq
\|H_{u_p-u}H_u^{j-1}u\|_{L^2_+}\|H_{u_p}^{2n-2-j}u\|_{L^2_+}\\
& \leq \|H_{u_p-u}H_u^{j-1}u\|_{L^2_+}\|u_p\|_{\dot{H}^{1/2}_+}^{2n-2-j}\|u\|_{L^2_+},
\end{align*}

\noi
and the first factor tends to zero since $H_{u_p-u}(h)\to 0$ in $L^2_+$ for all $h\in L^2_+$.
For the case when $j$ is odd, we use equation \eqref{sym H_u} and then proceed similarly.

We prove in the following that $\ld_j(u)=\ld_j$ and $\nu_j(u)=\nu_j$.
Since, by equation \eqref{J_2n}, $J_{2(n+1)}(u_p)=\sum_{j=1}^N\big(\ld_j^{(p)}\big)^{2(n+1)}\big(\nu_j^{(p)}\big)^2$,
we have that
\begin{align*}
\sum_{n=0}^{\infty}x^nJ_{2(n+1)}(u_p)&=\sum_{n=0}^{\infty}x^n\sum_{j=1}^N\big(\ld_j^{(p)}\big)^{2(n+1)}\big(\nu_j^{(p)}\big)^2\\
&=\sum_{j=1}^N\big(\ld_j^{(p)}\big)^{2}\big(\nu_j^{(p)}\big)^2\sum_{n=0}^\infty x^n\big(\ld_j^{(p)}\big)^{2n}
=\sum_{j=1}^N\frac{\big(\ld_j^{(p)}\big)^{2}\big(\nu_j^{(p)}\big)^2}{1-x\big(\ld_j^{(p)}\big)^2},
\end{align*}

\noi
for $|x|<1/\ld_j^2$, and thus for every $x$ distinct from the poles.
Then, using $\ld_j^{(p)}\to \ld_j$ and $\nu_j^{(p)}\to \nu_j$, we obtain
\begin{align*}
\sum_{n=0}^{\infty}x^nJ_{2(n+1)}(u_p)\to\sum_{j=1}^N\frac{\ld_j^{2}\nu_j^2}{1-x\ld_j^2}.
\end{align*}

\noi
On the other hand, we have that
\begin{align*}
\sum_{n=0}^{\infty}x^nJ_{2(n+1)}(u_p)
\to\sum_{n=0}^{\infty}x^nJ_{2(n+1)}(u)=
\sum_{j=1}^N\frac{\big(\ld_j(u)\big)^{2}\big(\nu_j(u)\big)^2}{1-x\big(\ld_j(u)\big)^2}.
\end{align*}

\noi
Therefore,
\begin{align*}
\sum_{j=1}^N\frac{\ld_j^{2}\nu_j^2}{1-x\ld_j^2}
=\sum_{j=1}^N\frac{\big(\ld_j(u)\big)^{2}\big(\nu_j(u)\big)^2}{1-x\big(\ld_j(u)\big)^2},
\end{align*}

\noi
which yields, by identification, $\ld_j(u)=\ld_j$ and $\nu_j(u)=\nu_j$.

At last, we show that $e_j(u_p)\to\pm e_j(u)$. It then follows that
\begin{align*}
2\phi_j^{(p)}=&2\phi_j(u_p)=\arg(u_p,e_j(u_p))^2\to \arg(u,e_j(u))^2=2\phi_j(u)\\
\gamma_j^{(p)}=&\gamma_j(u_p)=\textup{Re}(T^{(p)}e_j(u_p),e_j(u_p))\to \textup{Re}(Te_j(u),e_j(u))=\gamma_j(u).
\end{align*}

\noi
Since, by \eqref{convergent sequence}, we also have $2\phi_j^{(p)}\to2\phi_j$ and $\gamma_j^{(p)}\to\gamma_j$, we obtain that $2\phi_j(u)=2\phi_j$ and $\gamma_j(u)=\gamma_j$. Hence $\chi(u)=(I,\tilde{I},2\phi,\gamma)\in K$,
and $u\in \chi^{-1}(K)$. Thus $\chi^{-1}(K)$ is compact, which proves that $\chi$ is proper.

We still need to show that $e_j(u_p)\to\pm e_j(u)$. Using $\ld_j^{(p)}\to\ld_j=\ld_j(u)$,\\
$\nu_j^{(p)}\to\nu_j=\nu_j(u)$, we have that
\[\|u_p\|_{\dot{H}^{1/2}_+}=\sum_{j=1}^N\big(\ld_j^{(p)}\big)^2\big(\nu_j^{(p)}\big)^2\to
\sum_{j=1}^N\big(\ld_j(u)\big)^2\big(\nu_j(u)\big)^2=\|u\|_{\dot{H}^{1/2}_+}.\]

\noi
Since $u_p\wk u$ in $H^{1/2}_+$ and $u\to u$ in $L^2_+$, it follows that $u_p\to u$ in $H^{1/2}_+$.
This yields that $H_{u_p}\to H_u$ in the sense of the norm. As a consequence, setting
\[P_j^{(p)}h:=(h,e_j(u_p))e_j(u_p).\]

\noi
to be the orthogonal projection onto the
eigenspace of $H_{u_p}^2$, corresponding to the eigenvalue $\big(\ld_j^{(p)}\big)^2$
and similarly,
\[P_j(u)h:=(h,e_j(u))e_j(u)\]

\noi
to be the orthogonal projection onto the eigenspace of $H_u^2$, corresponding to the eigenvalue $\ld_j^2(u)$,
we have by Theorem VIII.23 in \cite{Reed and Simon}, that $P_j^{(p)}\to P_j(u)$.

Therefore, $(h,e_j(u_p))e_j(u_p)\to (h,e_j(u))e_j(u)$ as $p\to\infty$, for all $h\in L^2_+$. Taking $h=e_j(u)$,
we have that $(e_j(u),e_j(u_p))e_j(u_p)\to e_j(u)$. Since $e_j(u_p)$ and $e_j(u)$ are unitary vectors,
we notice that $|(e_j(u),e_j(u_p))|=1$. Then, using the relation \eqref{sym H_u}, we have
\begin{align*}
(e_j(u),e_j(u_p))&=\frac{1}{\ld_j^{(p)}}(e_j(u),H_{u_p}e_j(u_p))\\
&=\frac{1}{\ld_j^{(p)}}(e_j(u),H_{u}e_j(u_p))+\frac{1}{\ld_j^{(p)}}(e_j(u),H_{u_p-u}e_j(u_p))\\
&=\frac{1}{\ld_j^{(p)}}(e_j(u_p),H_{u}e_j(u))+\frac{1}{\ld_j^{(p)}}(e_j(u),H_{u_p-u}e_j(u_p))\\
&=\frac{\ld_j(u)}{\ld_j^{(p)}}(e_j(u_p),e_j(u))
+\frac{1}{\ld_j^{(p)}}(e_j(u_p),H_{u_p-u}e_j(u))
\end{align*}

\noi
Letting $p\to\infty$, we obtain
\begin{align*}
\lim_{p\to\infty}(e_j(u),e_j(u_p))=\lim_{p\to\infty}(e_j(u_p),e_j(u))=\cj{\lim_{p\to\infty}(e_j(u),e_j(u_p))}.
\end{align*}

\noi
Since the above limit is of absolute value 1, we obtain that \\$\lim_{p\to\infty}(e_j(u),e_j(u_p))=\pm 1$
and therefore $e_j(u_p)\to \pm e_j(u)$ as $p\to\infty$.

\subsection{$\chi$ is a symplectic transformation}

We proved so far that $\chi$ is a diffeomorphism and we computed the Poisson brackets between
actions and (generalized) angles. In order to prove that $\chi$ is symplectic, we only need to prove that
the Poisson brackets involving only angles and generalized angles,
$\{\phi_j,\phi_k\}$, $\{\gamma_j,\phi_k\}$, and $\{\gamma_j,\gamma_k\}$, are zero.

We first remark that the Jacobi identity
yield that $\{\phi_j,\phi_k\}$, $\{\gamma_j,\phi_k\}$, and $\{\gamma_j,\gamma_k\}$
are only functions of $\ld_{\ell}^2$ and $\ld_{\ell}^2\nu_{\ell}^2$ for $\ell=1,2,\dots,N$.
Indeed, for the first one we take $f=\ld_{\ell}^2$ and then $f=\ld_{\ell}^2\nu_{\ell}^2$ in
\begin{align}
\{f,\{\phi_j,\phi_k\}\}+\{\phi_k,\{f,\phi_j\}\}+\{\phi_j,\{\phi_k,f\}\}=0,
\end{align}

\noi
which gives by \eqref{1}, \eqref{2}, that
\begin{align}
\{\ld_{\ell}^2,\{\phi_j,\phi_k\}\}=\{\ld_{\ell}^2\nu_{\ell}^2,\{\phi_j,\phi_k\}\}=0.
\end{align}

\noi
Writing $\{\phi_j,\phi_k\}=h(\ld_{\ell}^2,\ld_{\ell}^2\nu_{\ell}^2,\phi_{\ell},\gamma_{\ell})$ for $\ell=1,2,\dots,N$, we obtain
\begin{align}
\frac{\partial h}{\partial \phi_{\ell}}=\frac{\partial h}{\partial \gamma_{\ell}}=0.
\end{align}

Define now $J_1(u)=(u,g)$ and $J_3(u)=(H_u^2u,g)$. We will compute
$\{J_1,J_3\}$ to prove that $\{\phi_j,\phi_k\}=0$.
We have that
\begin{align*}
d_uJ_1(h)&=\lim_{t\to 0}\frac{\big(u+th,g(u+th)\big)-(u,g(u))}{t}
=(h,g(u))+\lim_{t\to 0} \Big(u,\frac{g(u+th)-g(u)}{t}\Big)\\
&=(h,g(u))+(u,d_ug(h))=(h,g(u))+\big(H_ug,d_ug(h)\big)=(h,g(u))+\big(H_u(d_ug(h)),g\big).
\end{align*}

\noi
In order to compute $H_u(d_ug(h))$, we differentiate the equation $u=H_ug$:
\begin{align*}
h&=\lim_{t\to 0}\frac{H_{u+th}g(u+th)-H_ug(u)}{t}=\lim_{t\to 0}\Big(H_u\Big (\frac{g(u+th)-g(u)}{t}\Big )+H_hg(u+th)\Big )\\
&=H_u(d_ug(h))+H_h(g).
\end{align*}

\noi
Thus, $H_u(d_ug(h))=h-H_h(g)=\Pi(h(1-\bar{g}))$
and
$d_uJ_1(h)=(h,g)+(h,g(1-g))$.

\noi
Therefore, the vector fields corresponding to the real and imaginary part of $J_1$ are:
\begin{align*}
X_{\text{Re}J_1}&=-\frac{i}{4}\big (g+g(1-g)\big ),\,\,\,\,\,\,X_{\text{Im}J_1}=\frac{1}{4}\big (g+g(1-g)\big ).
\end{align*}

\noi
Similarly we have that
\begin{align*}
d_uJ_3(h)&=(H_u^2h,g)+(H_uH_hu+H_hH_uu,g)+(H_u^2u,d_ug(h))\\
&=(h,H_u^2g)+(H_ug,H_hu)+(H_hH_uu,g)+\big(H_u(d_ug(h)),H_uu\big)\\
&=(h,H_uu)+(u^2,h)+(h,gH_uu)+\big(h,(1-g)H_uu\big)=2(h,H_uu)+(u^2,h).
\end{align*}

\noi
Then
\begin{align*}
\{J_1,J_3\}=&d_uJ_3\cdot X_{\textup{Re}J_1}(u)+id_uJ_3\cdot X_{\textup{Im}J_1}(u)\\
=&2\Big(-\frac{i}{4}\big (g+g(1-g)\big ),H_uu\Big)+\Big(u^2,-\frac{i}{4}\big (g+g(1-g)\big )\Big)\\
&+2i\Big(\frac{1}{4}\big (g+g(1-g)\big ),H_uu\Big)+i\Big(u^2,\frac{1}{4}\big (g+g(1-g)\big )\Big)\\
=&\frac{i}{2}\big(u^2,g+g(1-g)\big).
\end{align*}

\noi
Using equations \eqref{eqn:bar{u}(1-g)} and \eqref{eqn basic}, we have
\begin{align*}
(u^2,g+(1-g)g)=&(u^2,g)+(u^2,(1-g)g)=(u^2,g)+(u(1-\bar{g}),\bar{u}g)\\
=&(u^2,g)+(u(1-\bar{g}),(I-\Pi)(\bar{u}g))=(u^2,g)+(u(1-\bar{g}),\cj{\Pi(u\bar{g})})\\
=&(u^2,g)+(u(1-\bar{g}),\bar{u})
=(u^2,g)+\int_{-\infty}^{\infty}u^2-(u^2,g)=\int_{-\infty}^{\infty}u^2=0.
\end{align*}

\noi
Thus, we obtain $\{J_1,J_3\}=0$.
On the other hand, we have
\begin{align*}
\{J_1,J_3\}=&\Big\{\sum_{j=1}^N\ld_j\nu_j^2e^{-2i\phi_j},\sum_{k=1}^N\ld_k^3\nu_k^2e^{-2i\phi_k}\Big\}\\
=&\sum_{j,k=1}^Ne^{-2i(\phi_j+\phi_k)}\Big(-i\{\ld_j\nu_j^2,\phi_k\}
+i\{\ld_k^3\nu_k^2,\phi_j\}+\{\phi_j,\phi_k\}\ld_j\ld_k^3\nu_j^2\nu_k^2\Big).
\end{align*}

\noi
Since $\{\phi_j,\phi_k\}$
only depends on $\ld_{\ell}^2$ and $\ld_{\ell}^2\nu_{\ell}^2$, $\ell=1,2,\dots,N$,
we have that the coefficient of $e^{-2i(\phi_j+\phi_k)}$ in the above expression is
\[-i\{\ld_j\nu_j^2,\phi_k\}-i\{\ld_k\nu_k^2,\phi_j\}
+i\{\ld_k^3\nu_k^2,\phi_j\}+i\{\ld_j^3\nu_j^2,\phi_k\}
+\{\phi_j,\phi_k\}\ld_j\ld_k\nu_j^2\nu_k^2(\ld_j^2-\ld_k^2).\]

\noi
Comparing the two expressions for $\{J_1,J_3\}$, we have that $\{J_1,J_3\}$ is a trigonometric polynomial which is equal to zero.
Therefore all its coefficients are zero,
which triggers, by taking the real part, that $\{\phi_j,\phi_k\}=0$.

In order to compute $\{\gamma_j,\gamma_k\}$ and $\{2\phi_j,\gamma_k\}$ we denote
$A:=(Tu,u)$, $C:=(Tu,g)$
and compute $\{A,C\}$ in two different ways.
First, we use $\{A,C\}(u)=d_uC\cdot X_{\textup{Re}A}+id_uC\cdot X_{\textup{Im}A}$. Since
\begin{align*}
d_uA(h)=2\textup{Re}(h,Tu)+\Lambda (u)\big(g(1-g),h\big)+\big(h,\frac{1}{2\pi i}(u,g)g\big),
\end{align*}

\noi
for all $h$ rational function (notice that we extend the definition of $T$ to $\bigcup_{N\in\N^{\ast}}\M(N)$),
we obtain the following Hamiltonian vector fields:
\begin{align*}
X_{\textup{Re}A}=&-\frac{i}{2}Tu-\frac{i}{4}\Lambda (u)g(1-g)-\frac{1}{8\pi}(u,g)g,\\
X_{\textup{Im}A}=&-\frac{1}{4}\Lambda (u)g(1-g)-\frac{i}{8\pi}(u,g)g.
\end{align*}

\noi
Similarly we have
\begin{align*}
d_uC(h)=\Lambda (u)(f(1-g),h)-\frac{1}{2\pi i}(u,g)(h,f(1-g))+(Th,g)+(h,(1-g)Tg),
\end{align*}

\noi
where $f$ is the unique element in $\textup{Ran}(H_u)$ such that $H_uf=g$.
By Lemma \ref{lemma:g=1-b}, we have that $\textup{Ker}(H_u)=(1-g)L^2_+$
and using the orthogonality of $\textup{Ker}(H_u)$ and $\textup{Ran}(H_u)$
we obtain
\begin{align*}
\{A,C\}=-\frac{i}{2}(T^2u,g)+\frac{1}{4\pi}\Lambda (u)(u,g)(g,f)-\frac{i}{2}\Lambda (u)(Tg(1-g),g)
-\frac{i}{2}\Lambda (u)(g,Tg).
\end{align*}

\noi
Notice also that, by Lemmas \ref{lemma:g=1-b} and \ref{lemma Tast}, we have
\begin{align*}
Tg(1-g)&=xg(1-g)-\Lambda \big(g(1-g)\big)(1-g)=xg(1-g)-\Lambda (g)(1-g)\\
&=(1-g)T^{\ast}g\in\textup{Ker}(H_u),
\end{align*}

\noi
and thus by orthogonality of $\textup{Ker}(H_u)$ and $\textup{Ran}(H_u)$,
 the third term vanishes. By \eqref{T-Tast},
we rewrite the first term as
\begin{align*}
-\frac{i}{2}(T^2u,g)=-\frac{i}{2}\Big(T^{\ast}Tu-\frac{1}{2\pi i}(Tu,g)g,g\Big)
=-\frac{i}{2}(Tu,Tg)+\frac{1}{4\pi}(Tu,g)(g,g).
\end{align*}

\noi
Hence, we have
\begin{align*}
\{A,C\}=-\frac{i}{2}(Tu,Tg)+\frac{1}{4\pi}(Tu,g)(g,g)+\frac{1}{4\pi}
\Lambda (u)(u,g)(g,f)-\frac{i}{2}\Lambda (u)(g,Tg).
\end{align*}

\noi
Proceeding as in the case of $\{J_1,J_3\}$, we obtain after tedious computations that
$\{\gamma_j,2\phi_k\}=0$ and
$\{\gamma_j,\gamma_k\}=0$
for all $j,k\in\{1,2,\dots,N\}$,
which proves that the coordinates we defined are symplectic.

\end{proof}

\begin{proof}[Proof of Corollary \ref{cor:toroidal cilinders}]
Fixing $\ld_j$ and $\nu_j$ the application $\chi$ in Theorem \ref{thm:action-angle}
yields a diffeomorphism between $TC(u_0)$ and $\T^N\times\R^N$.
\end{proof}

\noi
{\bf Acknowledgments:}
The author is grateful to her Ph.D. advisor Prof. Patrick G\'erard
for introducing her to this subject and for his constant support
during the preparation of this paper.

\end{document}